\definecolor{myblue}{rgb}{0,0,0.5}
\definecolor{mygreen}{rgb}{0,0.5,0}
\definecolor{myred}{rgb}{0.5,0,0}
\crefname{framework}{algorithmic framework}{algorithmic frameworks}
\Crefname{framework}{Algorithmic Framework}{Algorithmic Frameworks}
\newcommand{\RNum}[1]{\uppercase\expandafter{\romannumeral #1\relax}}
\def \[{\begin{equation}}
\def \]{\end{equation}}
\def \]{\end{equation}}
\pgfplotsset{compat=1.17}
\tikzset{box/.style ={
		rectangle,
		rounded corners =5pt,
		minimum width =50pt,
		minimum height =20pt,
		inner sep=5pt,
		draw=blue}
}
\tikzset{zbox/.style ={
		rectangle,
		minimum width =50pt,
		minimum height =20pt,
		inner sep=5pt,
		draw=black}
}
\tikzset{ball/.style ={
		circle,
		minimum width =20pt,
		minimum height =20pt,
		inner sep=0.1pt,
		draw=blue}
}
\tikzset{global scale/.style={
		scale=#1,
		every node/.append style={scale=#1}
	}
}
\tikzset{
	meta box/.style={
		draw,
		black,
		very thick,
		text centered
	},
	inputbox/.style={
		meta box,
		rectangle,
		rounded corners,
		inner sep=5pt,
		minimum height=2em,
		minimum width=10em,
		align=center,
		text width=10em
	},
	inputbox2/.style={
		meta box,
		rectangle,
		rounded corners,
		inner sep=5pt,
		minimum height=2em,
		minimum width=12em,
		align=center,
		text width=12em
	},
	box1/.style={
		meta box,
		rectangle,
		rounded corners,
		inner sep=5pt,
		minimum height=2em,
		minimum width=3em,
		align=center,
		text width=3em
	},
	hcbox/.style={
		meta box,
		rectangle,
		inner sep=5pt,
		minimum height=2em,
		minimum width=6em,
		align=center,
		text width=6em
	},
	round box/.style={
		meta box,
		circle
	},
	every fit/.style={
		draw,
		thick,
		dashed,
		gray,
		inner sep=10pt
	}
}
\newcommand\tikz@expand@dimen[2]{\tikzset{minimum #2=#1}}
\tikzset{
	add dimen/.code 2 args={%
		\pgfkeysgetvalue{/pgf/minimum #1}\tikz@dimen@min
		\expandafter\tikz@expand@dimen\expandafter{\tikz@dimen@min + #2 * 2em}{#1}%
	},
	wider/.style={add dimen={width}{#1}},
	higher/.style={add dimen={height}{#1}},
}
\newcommand*{\num}{pi}
\tikzset{elegant/.style={smooth,thick,samples=50,cyan}}
\tikzset{eaxis/.style={->,>=stealth}}
\newcommand{\cS}{\mathcal{S}}
\newcommand{\cL}{\mathcal{L}}
\newcommand{\cP}{\mathcal{P}}
\newcommand{\cQ}{\mathcal{Q}}
\newcommand{\cF}{\mathcal{F}}
\newcommand{\cA}{\mathcal{A}}
\DeclareMathOperator{\prox}{prox}
\DeclareMathOperator{\argmin}{argmin}
\crefname{hypothesis}{Hypothesis}{Hypotheses}
\title{An Operator Learning Approach to Nonsmooth Optimal Control of Nonlinear PDEs\thanks{April 23, 2025.
\funding{
The work of X. Yuan was supported by the GRF RGC (No. 17309824). The work of H. Yue was supported by the National Natural Science Foundation of China (No. 12301399).
The work of T. Zeng was supported by Hong Kong PhD Fellowship Scheme.}}}
\author{Yongcun Song\thanks{Department of Mathematics, City University of Hong Kong, Hong Kong, China
  (\email{ysong307@gmail.com}).}
\and Xiaoming Yuan\thanks{Department of Mathematics, The University of Hong Kong, Hong Kong, China
  (\email{xmyuan@hku.hk}).}
\and Hangrui Yue\thanks{School of Mathematical Sciences, Nankai University, Tianjin, China
  (\email{yuehangrui@gmail.com}).}
\and Tianyou Zeng\thanks{Department of Mathematics, The University of Hong Kong, Hong Kong, China
  (\email{logic@connect.hku.hk})}
}
\begin{document}

\maketitle

\begin{abstract}
Optimal control problems with nonsmooth objectives and nonlinear partial differential equation (PDE) constraints are challenging, mainly because of the underlying nonsmooth and nonconvex structures and the demanding computational cost for solving multiple high-dimensional and ill-conditioned systems after mesh-based discretization.
To mitigate these challenges numerically, we propose an operator learning approach in combination with an effective primal-dual optimization idea which can decouple the treatment of the control and state variables so that each of the resulting iterations only requires solving two PDEs. Our main purpose is to construct neural surrogate models for the involved PDEs by operator learning, allowing the solution of a PDE to be obtained with only a forward pass of the neural network. The resulting algorithmic framework offers a hybrid approach that combines the efficiency and generalization of operator learning with the model-based nature and structure-friendly efficiency of primal-dual-based algorithms. The primal-dual-based operator learning approach offers numerical methods that are mesh-free, easy to implement, and adaptable to various optimal control problems with nonlinear PDEs. It is notable that the neural surrogate models can be reused across iterations and parameter settings, hence retraining of neural networks can be avoided and computational cost can be substantially alleviated. We affirmatively validate the efficiency of the primal-dual-based operator learning approach across a range of typical optimal control problems with nonlinear PDEs.
\end{abstract}

\begin{keywords}
Optimal control, nonlinear PDE, operator learning, deep neural network, nonsmooth optimization, nonconvex optimization, primal-dual method
\end{keywords}

\begin{MSCcodes}
49M41, 35Q93, 35Q90, 68T07, 65K05
\end{MSCcodes}

\section{Introduction}

We consider a general optimal control model with a nonlinear partial differential equation (PDE) constraint
\begin{equation}\label{eq:basic-problem}
	\begin{aligned}
		& \min_{u\in {U}, y\in {Y}} \quad   J(y, u) := \frac{1}{2}\| y-y_d \|^2_Y+\frac{\alpha}{2}\|u\|_U^2+ \theta(u) \\
		& \quad {\hbox{s.t.}}~ \qquad  y=S(u),
	\end{aligned}
\end{equation}
where $U$ and $Y$ are Hilbert spaces with corresponding norms $\lVert \cdot \rVert _U$ and $\lVert \cdot \rVert _Y$, respectively; $u \in U$ and $y \in Y$ are the control and state variables, respectively; $y_d \in Y$ represents a given desired state; $y = S(u)$ represents a state equation with $S: U \to Y$ being the corresponding solution operator, and $\alpha > 0$ is a regularization parameter.
In (\ref{eq:basic-problem}), the regularization functional $\theta: U \to \mathbb{R} \cup \{+\infty\}$ is usually imposed for additional constraints on the control variable $u$, such as boundedness~\cite{lions1971optimal,troltzsch2010optimal}, sparsity~\cite{stadler2009elliptic}, and discontinuity~\cite{elvetun2016split}. The model (\ref{eq:basic-problem}) captures specific optimal control problems in various domains such as physics, chemistry, biology, as well as modern engineering disciplines like digital twins and physics engines, see e.g., \cite{antil2024mathematical,de2015numerical,glowinski1994exact,glowinski1995exact,hinze2008optimization,lions1971optimal,troltzsch2010optimal}.

We focus on the difficult case where $S$ is \emph{nonlinear} and $\theta$ is \emph{nonsmooth}, hence problem \eqref{eq:basic-problem} is generally nonconvex and nonsmooth. For instance, the abstract state equation $y = S(u)$ can be specified as different PDEs, including Burgers equations \cite{de2004comparison,volkwein2000application}, semilinear and quasilinear parabolic equations \cite{hoppe2021convergence,langer2020unstructured,troltzsch2010optimal}, the Allen-Cahn equation \cite{colli2015optimal}, the Navier-Stokes equation \cite{hintermuller2006sqp}, etc.
The function $\theta(u)$ can be chosen as the indicator function of an admissible set \cite{song2023accelerated,song2024admm}, an $L^1$ regularization term \cite{stadler2009elliptic}, or a total variation regularization term \cite{elvetun2016split}. The control variable $u$ can be a distributed control or a boundary control \cite{kroner2011semismooth}. The concurrent presence of nonsmoothness in $\theta(u)$ and nonlinearity in $y=S(u)$ poses significant numerical challenges. Specifically, the nonsmoothness of $\theta(u)$ basically hinders direct applications of most of the canonical algorithms such as gradient descent methods, conjugate gradient methods, and quasi-Newton methods. Furthermore, the nonlinearity of $y=S(u)$, combined with the high-dimensionality and ill-conditioned nature of the algebraic system resulting from numerical discretization, also leads to severe numerical difficulties.

To solve such a challenging and versatile problem, we aim to propose a general \emph{operator learning approach} for (\ref{eq:basic-problem}) which can be specified as concrete and efficient algorithms for (\ref{eq:basic-problem}) with specific PDE constraints.
The operator learning approach employs the primal-dual decomposition idea to decouple \eqref{eq:basic-problem} into parametric PDE subproblems, namely the state and adjoint equations, and leverages pretrained neural networks to approximate their solution operators directly. With this feature, algorithms specified by the operator learning approach do not require solving any algebraic system from numerical discretization or retraining any neural network, hence avoiding heavy computation load.
Additionally, the resulting algorithms can be reused without retraining of any neural network even when the parameters $y_d$, $\alpha$, and $\theta$ defining \eqref{eq:basic-problem} are changed.
This capability is particularly suitable for several important scenarios requiring repeatedly solving problems with varying parameters, such as the real-time online control tasks in \cite{behrens2014real,biegler2007real} and the numerical study of optimal control problems in \cite{berggren1996computational,glowinski1994exact}.

\subsection{Traditional numerical methods}\label{sec:trad-num-method}

In the literature, some numerical methods have been developed for solving various specific cases of \eqref{eq:basic-problem} or more general optimal control models, such as semismooth Newton (SSN) methods \cite{casas2024convergence,de2004comparison,de2005semismooth,schindele2017proximal}, sequential quadratic programming (SQP) methods \cite{griesse2010local,heinkenschloss1999analysis,hintermuller2006sqp,hoppe2021convergence,troltzsch1999optimal}, and interior point methods (IPMs) \cite{grote2014inexact,mittelmann2000solving,weiser2004function}.
In particular, SSN methods have been studied in \cite{casas2024convergence} for sparse optimal control of semilinear elliptic equations, in \cite{de2005semismooth} for control constrained boundary optimal control of the Navier-Stokes equations, in \cite{schindele2017proximal} for sparse bilinear control of parabolic equations, in \cite{de2004comparison} for control constrained optimal control of Burgers equations,  just to name a few.  Various SQP methods have also been extensively studied in the past years for solving the nonsmooth optimal control of a wide range of  nonlinear PDEs, see e.g., \cite{hoppe2021convergence} for quasilinear parabolic equations, \cite{troltzsch1999optimal} for semilinear parabolic equations, \cite{hintermuller2006sqp} for time-dependent Navier-Stokes equations, \cite{griesse2010local} for semilinear elliptic equations, and \cite{heinkenschloss1999analysis} for a phase field equation.  Moreover, IPMs have been designed for the optimal control of semilinear elliptic equations in \cite{mittelmann2000solving} and for the optimal control of a general class of nonlinear PDEs in \cite{grote2014inexact,weiser2004function}, respectively.
Notably, all the above algorithms result in subproblems that usually require to be solved iteratively by deliberately designed algorithms, leading to nested (multi-layer) iterations and consequently demanding computation loads.

\subsection{Deep learning methods for PDEs}\label{sec:dl-pde}

Recently, deep learning methods have found widespread applications in various scientific and engineering fields, owing to the powerful representation capabilities~\cite{cybenko1989approximation,hornik1989multilayer,kidger2020universal} and generalization abilities~\cite{kawaguchi2017generalization,neyshabur2017exploring} of deep neural networks (DNNs).
In particular, numerous deep learning methods for solving PDEs have been developed in recent years, as evidenced by~\cite{e2018deep,li2021fourier,lu2019deeponet,raissi2019physics,sirignano2018dgm} and the references therein.
Some deep learning methods approximate the solution of a given PDE by training DNNs, such as the deep Ritz method \cite{e2018deep}, the deep Galerkin method \cite{sirignano2018dgm}, and physics-informed neural networks (PINNs) \cite{lu2021physics,raissi2019physics}. Compared with traditional numerical methods for PDEs such as finite element methods (FEMs) or finite difference methods (FDMs), the above deep learning methods are typically mesh-free, easy to implement, and flexible in solving various PDEs, especially for high-dimensional problems or those with complex geometries. Despite their applications in diverse fields, these methods require training a new DNN at each iteration for the solution of a PDE but with different parameters (e.g., initial/boundary condition, source term, or coefficients), which can be computationally expensive and time-costing.

Alternatively, operator learning methods (see~\cite{li2021fourier,lu2019deeponet}) apply DNNs to approximate the solution operator of a given family of parameterized PDEs, mapping the PDE parameters to the solution. Once the neural solution operator is learned, it serves as a surrogate model to the shared solution operator, and only one forward pass of the DNN is required to solve any PDE within the given family.
Representative operator learning methods for solving PDEs include the Deep Operator Networks (DeepONets) \cite{lu2019deeponet}, the MIONet \cite{jin2022mionet}, the physic-informed DeepONets \cite{wang2021learning}, the Fourier Neural Operator (FNO) \cite{li2021fourier}, the Graph Neural Operator (GNO) \cite{li2020neural}, the random feature model \cite{nelsen2021random}, the PCA-Net \cite{bhattacharya2021model}, and the Laplace Neural Operator (LNO) \cite{cao2023lno}.
These methods provide DNN structures that are universal for approximating operators between Banach spaces \cite{chen1995universal,kovachki2023neural}, and demonstrate impressive capabilities in capturing the nonlinear relation between parameters and solutions of PDEs. With satisfactory accuracy, numerical efficiency, and generalization ability, operator learning methods provide competitive alternatives to traditional methods, especially for scenarios that require repetitive yet expensive solves; see e.g., \cite{hwang2022solving,song2023accelerated,wang2021fast}.

\subsection{Deep learning methods for optimal control}\label{sec:dl-oc}

Deep learning techniques have also been applied to various optimal control problems, such as the PINNs specialized for smooth PDE-constrained optimization problems in \cite{barry2022physics,dai2023solving,lu2021physics,mowlavi2023optimal} and the operator learning methods for smooth optimal control problems in \cite{hwang2022solving,wang2021fast} .
These methods approximates the optimal control variable by a DNN and, in particular, require training different neural networks for different problems.
Our recent works \cite{lai2023hard,song2023accelerated,song2024admm} have investigated deep learning techniques for nonsmooth optimal control problems, either by directly solving the corresponding optimality systems \cite{lai2023hard} or by combining deep learning with traditional optimization algorithms \cite{song2023accelerated,song2024admm}. More specifically, in \cite{lai2023hard}, we studied the control constrained interface optimal control problem, and adopted the hard-constraint PINNs and a nonsmooth training loss function to solve its optimality system, which is a system of coupled PDEs. In \cite{song2023accelerated}, we focused on optimal control problems with \emph{linear} PDE constraints, and combined the well-studied primal-dual method in \cite{chambolle2011first, he2012convergence} (see also, e.g., \cite{clason2019acceleration,he2017algorithmic, valkonen2014primal} for various extensions in different settings) with operator learning techniques.
Notably, neither \cite{lai2023hard} nor \cite{song2024admm} can be directly applied to the more general model \eqref{eq:basic-problem}.

\subsection{ADMM for \texorpdfstring{\eqref{eq:basic-problem}}{(1.1)}}\label{sec:admm}

We can easily extend the ADMM in \cite{glowinski2022application} and obtain the following iterative scheme for solving \eqref{eq:basic-problem}:

\begin{subnumcases}{\label{eq:admm-pinn}}
		u^{k+1} = \underset{u \in U}{\argmin} \left\{ \frac{1}{2}\| y - y_d \|^2_Y + \frac{\alpha}{2}\|u\|_U^2 + \frac{\gamma}{2} \left\| u - z^k-\frac{\lambda^k}{\gamma} \right\|_U^2 ~\Bigm\vert~  y = S(u) \right\}, \hspace{-4.5em} \label{eq:admm-pinn-u} \\
		z^{k+1} = \underset{z \in U}{\argmin} \left\{ \theta(z) + \frac{\gamma}{2} \left\| z - \left(u^{k+1} - \frac{\lambda^k}{\gamma} \right) \right\|_U^2 \right\}, \label{eq:admm-pinn-z} \\
		\lambda^{k+1} = \lambda^k - \gamma (u^{k+1} - z^{k+1}) \label{eq:admm-pinn-lambda},
\end{subnumcases}
where $\gamma>0$ is a penalty parameter, $z \in U$ is an auxiliary variable, and $\lambda \in U$ is the dual variable. As an ADMM type algorithm originated from \cite{glowinski1975sur}, the PDE constraint $y = S(u)$ and the nonsmooth regularization $\theta$ are treated separately in the \emph{smooth} optimal control problem \eqref{eq:admm-pinn-u} and the simple optimization problem \eqref{eq:admm-pinn-z}, respectively.

To implement the ADMM (\ref{eq:admm-pinn}), it is necessary to discuss how to solve the subproblem \eqref{eq:admm-pinn-u}. For instance, it can be discretized by FEMs and then solved iteratively by gradient type algorithms; the resulting algorithm is labeled as ADMM-FEM hereafter. Recall that we combined the classic ADMM \cite{glowinski1975sur} with PINNs in \cite{song2024admm}, and proposed the ADMM-PINNs algorithmic framework for a general class of nonsmooth PDE-constrained optimization problems that cover \eqref{eq:basic-problem} as a special case. Thus, we can also apply PINNs to solve the subproblem \eqref{eq:admm-pinn-u} and readily obtain the ADMM-PINNs algorithm for \eqref{eq:basic-problem}. As mentioned in Section \ref{sec:dl-oc}, the ADMM-PINNs algorithm for \eqref{eq:basic-problem} also requires retraining neural networks iteratively for solving the subproblem \eqref{eq:admm-pinn-u} with different $z^k$ and $\lambda^k$.

\subsection{Primal-dual methods for \texorpdfstring{\eqref{eq:basic-problem}}{(1.1)}}\label{sec:pd-decoup}

The primal-dual methods are initiated in \cite{chambolle2011first, he2012convergence} for a class of optimization problems with linear constraints and extended to model \eqref{eq:basic-problem} with nonlinear constraint in some works such as \cite{clason2017primal,valkonen2014primal}. To illustrate, we first introduce functionals $F: Y \to \mathbb{R}$ and $G: U \to \mathbb{R} \cup \{+\infty\}$ defined respectively by
\begin{equation*}
	F(y) = \frac{1}{2}\| y-y_d \|^2_Y, \quad \text{and} \quad G(u) = \frac{\alpha}{2}\|u\|_U^2+ \theta(u).
\end{equation*}
By introducing a dual variable $p \in Y$, problem~\eqref{eq:basic-problem} can be reformulated as the saddle point problem~\cite{rockafellar1970convex}
\begin{equation}\label{eq:basic-saddle}
	\begin{aligned}
		\min_{u\in {U}} \max_{p\in {Y}} \quad G(u) + \langle p, S(u) \rangle_Y - F^*(p),
	\end{aligned}
\end{equation}
where $\langle \cdot, \cdot \rangle_Y$ denotes the inner product in $Y$, and $F^*(p):={\sup}_{z\in{Y}}\{\langle z,p \rangle_{Y}-F(z)\}$ is the convex conjugate of $F(y)$, which can be specified as
$F^*(p)=\frac{1}{2}\|p\|_Y^2+\langle p, y_d\rangle_Y$.
Implementing the primal-dual idea in \cite{chambolle2011first,clason2017primal} to \eqref{eq:basic-saddle}, we readily obtain the following iterative scheme:
\begin{equation}\label{eq:pdhg-nonlinear}
	\left\{~
	\begin{aligned}
		& u^{k+1} = \prox_{\tau G} (u^k - \tau (S'(u^k))^* p^k), \\
		& p^{k+1} = \prox_{\sigma F^*} (p^k + \sigma S (2u^{k+1}-u^k)),
	\end{aligned}
	\right.
\end{equation}
where $\tau > 0$ and $\sigma > 0$ are parameters that essentially determine the stepsizes of the primal and dual subproblems, respectively; $\prox_{\tau G}$ and $\prox_{\sigma F^*}$ denote the proximal operators of the functionals $\tau G$ and $\sigma F^*$, respectively; $S'(u^k)$ denotes the Fr\'echet derivative of $S$ at $u^k$; and $(S'(u^k))^*$ denotes the adjoint operator\footnote{Here, we slightly abuse the notation $^*$ to denote both the adjoint of an operator and the convex conjugate of a functional, and the meaning of $^*$ can be clear in the specific context under discussion.} of $S'(u^k)$.

It is clear that $\prox_{\sigma F^*}$ in \eqref{eq:pdhg-nonlinear} admits a closed-form solution.
Regarding $\prox_{\tau G}$, as discussed in \cite{song2024admm}, it may have a closed-form solution for some specific cases of $\theta$, e.g., when $\theta$ is an $L^1$-regularization term or the indicator function of some simple closed convex set. It may also be solved efficiently either by certain optimization algorithms or by pre-trained neural networks. Therefore, the primary workload for implementing \eqref{eq:pdhg-nonlinear} is computing $S(2u^{k+1} - u^k)$ and $(S'(u^k))^*p^k$, namely, the state equation $y=S(u)$ with $u = 2u^{k+1} - u^k$ and a PDE $z = (S'(u))^*p$ termed the \emph{adjoint equation} with $u = u^k$ and $p = p^k$~\footnote{For brevity, we use the term \emph{adjoint equation} to refer to $z = (S'(u))^* p$ for any $p \in Y$, which covers its traditional definition (where $p = S(u) - y_d$) as a special case. The explicit expression of the adjoint equation (in form of PDE) can be formally derived for each specific PDE constraint in \eqref{eq:basic-problem}, as demonstrated in \Cref{sec:burgers,,sec:bp-opt-ctrl,,sec:sp-opt-ctrl}.}.
For the case where $\prox_{\tau G}$ admits a closed-form solution, the primal-dual method \eqref{eq:pdhg-nonlinear} does not need to solve any optimal control subproblem, and this is advantageous than the ADMM (\ref{eq:admm-pinn}).

Also, as mentioned in Section \ref{sec:admm}, it is necessary to discuss how to solve the subproblems of the primal-dual method \eqref{eq:pdhg-nonlinear}. If both the state and adjoint equations in \eqref{eq:basic-problem} are solved by FEMs, the resulting algorithm is labeled as the PD-FEM. Also, PINNs can be embedded into \eqref{eq:pdhg-nonlinear} for solving the underlying PDEs and the resulting algorithm is called the PD-PINNs hereafter. Again, the neural networks should also be retrained for the PD-PINNs iteratively with different $u^k$, $u^{k+1}$, and $p^k$.

\subsection{Motivation}

All the aforementioned algorithms are either inapplicable to \eqref{eq:basic-problem} or suffer from the computational bottleneck of repeatedly solving algebraic systems or retraining neural networks.
To tackle this problem, we notice that each of the state and adjoint equations in the primal-dual method \eqref{eq:pdhg-nonlinear} can be viewed as a parametric PDE that shares the same solution operator across iterations. Hence, we could construct two pre-trained neural surrogate models to approximate these solution operators, and propose a primal-dual-based operator learning approach for (\ref{eq:basic-problem}).
For a specific application of \eqref{eq:basic-problem}, a mesh-free algorithm can be specified and each iteration of the specified algorithm entails only two efficient forward passes of the pre-trained neural networks, along with a few simple algebraic operations. The main feature of this new approach is that it avoids repeatedly solving algebraic systems and retraining neural networks across algorithm iterations. This distinguishes from traditional numerical methods and other deep learning methods for \eqref{eq:basic-problem} in the literature, including the mentioned ADMM-PINNs in Section \ref{sec:admm} as an extension work of \cite{song2024admm}.

\subsection{Organization}

The rest of this paper is organized as follows.
In~\Cref{sec:preliminaries}, we present the primal-dual-based operator learning approach to (\ref{eq:basic-problem}) conceptually.
Then, we specify different algorithms for the optimal control of stationary Burgers equations in \Cref{sec:burgers}, sparse bilinear optimal control of parabolic equations in \Cref{sec:bp-opt-ctrl}, and optimal control of semilinear parabolic equations in \Cref{sec:sp-opt-ctrl}.
The effectiveness and efficiency of the resulting algorithms
are demonstrated in each section by some preliminary numerical results. In particular, we include some numerical comparisons with the reference ones obtained by FEM-based high-fidelity traditional numerical methods and other deep learning methods. Finally, some conclusions are summarized in \Cref{sec:conclusion}.

\section{A primal-dual-based operator learning approach to (\ref{eq:basic-problem})}\label{sec:preliminaries}

In this section, we delineate the primal-dual-based operator learning approach to problem \eqref{eq:basic-problem}.
The conceptual algorithmic framework will be specified as concrete algorithms for various optimal control problems in \Cref{sec:burgers,,sec:bp-opt-ctrl,,sec:sp-opt-ctrl}.

Recall that the primal-dual method \eqref{eq:pdhg-nonlinear} for \eqref{eq:basic-problem} involves solving two PDEs: the state equation $y=S(u)$ with $u = 2u^{k+1}-u^k$ and the adjoint equation $z = (S'(u))^* p$ with $u = u^k$ and $p = p^k$.
Consider first the state equation $y=S(u)$, which is a PDE parameterized by $u$. The operator learning approach approximates its solution operator $S$ by a neural network $\mathcal{S}_{\theta_s}$ parameterized by $\theta_s$. This neural network is pre-trained over a training set consisting of sampled controls $u$ and their corresponding states $S(u)$. Denoting $\theta_s^*$ as the optimized parameter, we thus obtain a trained neural surrogate model $\mathcal{S}_{\theta_s^*}$ for $S$, and evaluating $\mathcal{S}_{\theta_s^*} (u)$ for any input function $u$ requires only a forward pass of the neural network $\mathcal{S}_{\theta_s^*}$.
Similarly, we construct and train a neural network $\mathcal{A}_{\theta_a^*}(u, y, p)$ by operator learning to compute $(S'(u))^*p$ \footnote{At first glance, the term $(S'(u^k))^*p^k$ might seem to depend only on $u^k$ and $p^k$. However, as demonstrated later, it is sometimes more convenient to consider $(S'(u^k))^*p^k$ as being parameterized by $y^k= S(u^k)$ and $p^k$. Therefore, in the general form, we define $\cA_{\theta_a}$ as an operator of $u, y$, and $p$.}.
By replacing $S(2u^{k+1} - u^k)$ and $(S'(u^k))^*p^k$ in \eqref{eq:pdhg-nonlinear} with $\cS_{\theta_s^*} (2u^{k+1} - u^k)$ and $\cA_{\theta_a^*}(u^k, y^k, p^k)$, respectively, the resulting primal-dual-based operator learning approach to (\ref{eq:basic-problem}) in the abstract setting is presented as \Cref{alg:pdhg-opl}.

\begin{algorithm}
\floatname{algorithm}{Algorithmic Framework}
	\caption{A primal-dual-based operator learning approach to (\ref{eq:basic-problem}).}
\begin{algorithmic}[1]
	\REQUIRE    Pre-trained neural solution operators $\mathcal{S}_{\theta_s^*}$ and $\mathcal{A}_{\theta_a^*}$; initialization $u^0, p^0$; stepsizes $\tau, \sigma > 0$.
	
	\FOR {$k = 0, 1, \ldots $}
	\STATE $y^k = \mathcal{S}_{\theta_s^*}(u^k)$. \\
	\STATE $u^{k+1} = \prox_{\tau G} \left(u^k - \tau \mathcal{A}_{\theta_a^*}(u^k, y^k, p^k)\right)$. \\
	\STATE $p^{k+1} = \prox_{\sigma F^*} \left(p^k + \sigma \mathcal{S}_{\theta_s^*}(2u^{k+1} - u^k)\right)$. \\
	\ENDFOR
	\ENSURE Numerical solution $(\hat{u}(x) ,\hat{y}(x)) = (u^{k+1}, \cS_{\theta_s^*}(\hat{u}))$ of problem (\ref{eq:basic-problem}).
\end{algorithmic}
\label[framework]{alg:pdhg-opl}
\end{algorithm}

Different operator learning methods can be specified from \Cref{alg:pdhg-opl} by applying neural networks with different structures, such as the DeepONets \cite{lu2019deeponet,wang2021fast}, the FNO \cite{li2021fourier},  and the LNO \cite{cao2023lno}. Each iteration of all these primal-dual-based operator learning methods requires only  implementing two forward passes of the pre-trained neural networks $\mathcal{S}_{\theta_s^*}(u)$ and $\mathcal{A}_{\theta_a^*}(u, y, p)$, and some simple algebraic operations, hence avoids retraining across iterations.
Recall that the pre-trained models $\cS_{\theta_s^*}(u)$ and $\cA_{\theta_a^*}(u, y, p)$ are used to approximate $S(u)$ and $(S'(u))^*p$, respectively.
Therefore, they can be reused by \Cref{alg:pdhg-opl} for solving \eqref{eq:basic-problem} given different parameters $y_d$, $\alpha$ and $\theta$.
\Cref{alg:pdhg-opl} capitalizes on the efficiency and generalization offered by operator learning, while also inheriting the model-based nature and the structure-friendly feature of primal-dual methods by respecting the separable structure of \eqref{eq:basic-problem}.

In the following sections, we shall demonstrate the implementation of \Cref{alg:pdhg-opl} with different operator learning models to concrete examples of problem \eqref{eq:basic-problem} with various PDE constraints and nonsmooth regularization terms. We consider three classic optimal control problems including optimal control of stationary Burgers equations, sparse bilinear control of parabolic equations, and optimal control of semilinear parabolic equations. Numerical results are presented to validate the effectiveness and efficiency of the specified primal-dual-based operator learning methods. The code is implemented in Python with \texttt{PyTorch}~\cite{paszke2019pytorch}, and numerical experiments were conducted on a server with the Linux operating system, an AMD ENYC 7542 CPU (32 cores, 64 threads), a 512GB memory and an NVIDIA GeForce RTX 4090 GPU. All the codes and generated data are available at \url{https://github.com/tianyouzeng/PDOL-optimal-control}.

\section{Optimal control of stationary Burgers equations}\label{sec:burgers}

In this section, we showcase the application of \Cref{alg:pdhg-opl} to the optimal control of stationary Burgers equations \cite{de2004comparison,volkwein2000application} and derive a specific algorithm for this problem.
Let $\Omega = (0, 1)$ and consider
\begin{equation}\label{eq:burgers-optctrl}
    \begin{aligned}
        \min_{u\in L^2(\Omega),y\in L^2(\Omega)} \quad &\frac{1}{2}\|y - y_d\|^2_{L^2(\Omega)} + \frac{\alpha}{2}\|u\|^2_{L^2(\Omega)}+I_{U_{ad}}(u)
    \end{aligned}
\end{equation}
subject to the stationary Burgers equation
\begin{equation}\label{eq:burgers-equation}
         -\nu y''+yy'=u ~ \text{in}~(0,1), \quad
         y(0)=y(1)=0.
\end{equation}
Above, $I_{U_{ad}}(u)$ is the indicator function of the admissible set $U_{ad} := \{u \in L^2(\Omega): a \leq u(x) \leq b \text{~a.e.~in~} \Omega\}$ with $a$ and $b$ being given constants. The target $y_d\in H^1(\Omega)$ is prescribed, the constant $\nu > 0$ represents the viscosity coefficient, and $\alpha > 0$ is the regularization parameter.
The existence of a solution to problem~\eqref{eq:burgers-optctrl}--\eqref{eq:burgers-equation} has been well-studied, e.g., in \cite{de2004comparison}.

\subsection{Primal-dual decoupling for~\texorpdfstring{\eqref{eq:burgers-optctrl}--\eqref{eq:burgers-equation}}{(3.1)-(3.2)}}\label{sec:burgers-optctrl-pd}

We first delineate the primal-dual decoupling \eqref{eq:pdhg-nonlinear} for \eqref{eq:burgers-optctrl}--\eqref{eq:burgers-equation}, by elaborating on the computations of the proximal operators $\prox_{\tau G}$, $\prox_{\sigma F^*}$ and specifying the terms $S(2u^{k+1} - u^k)$, $(S'(u^k))^*p^k$ as concrete state and adjoint equations, respectively.
For \eqref{eq:burgers-optctrl}--\eqref{eq:burgers-equation}, the functionals $F: L^2(\Omega) \to \mathbb{R}$ and $G: L^2(\Omega) \to \mathbb{R} \cup \{+\infty\}$ defined in Section \ref{sec:pd-decoup} are specified by
\begin{equation}\label{eq:fun1}
    F(y) = \frac{1}{2}\|y - y_d\|^2_{L^2(\Omega)} \quad \text{and} \quad G(u) = \frac{\alpha}{2}\|u\|^2_{L^2(\Omega)} + I_{U_{ad}}(u).
\end{equation}
By the Moreau identity $y = \prox_{\sigma F}(y) + \sigma \prox_{\frac{1}{\sigma} F^*} \left(\frac{y}{\sigma}\right)$ and some simple manipulations, we have the following result for $\prox_{\tau G}$ and $\prox_{\sigma F^*}$.
\begin{proposition}\label{prop:burgers-prox}
    Given the functionals $F$ and $G$ defined in \eqref{eq:fun1} and any $\tau, \sigma > 0$, the proximal operators $\prox_{\tau G}$ and $\prox_{\sigma F^*}$ are given, respectively, by
    \begin{equation*}\label{eq:burgers-prox-G}
            \prox_{\tau G}(u) = \mathcal{P}_{U_{ad}} \left(\frac{1}{\alpha \tau + 1} \, u\right), \quad
            \prox_{\sigma F^*}(y) = \frac{1}{1 + \sigma} \left(y - \sigma y_d\right) \quad  \forall u, y\in L^2(\Omega).
    \end{equation*}
    Here, $\mathcal{P}_{U_{ad}}: L^2(\Omega) \to L^2(\Omega)$ denotes the projection operator onto $U_{ad}$ defined by
    \begin{equation*}
        \mathcal{P}_{U_{ad}}(v)(x) = \min\{\max\{v(x), a\}, b\}, \quad \forall v\in L^2(\Omega), ~ x \in \Omega.
    \end{equation*}
\end{proposition}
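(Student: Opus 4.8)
The plan is to compute each proximal operator directly from its definition, using the separable structure of the functionals $F$ and $G$ and the hint already provided in the statement, namely the Moreau identity.

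First I would handle $\prox_{\tau G}$. Since $G(u) = \frac{\alpha}{2}\|u\|^2_{L^2(\Omega)} + I_{U_{ad}}(u)$, the proximal subproblem is $\prox_{\tau G}(u) = \argmin_{v \in L^2(\Omega)} \left\{ \frac{\alpha}{2}\|v\|^2_{L^2(\Omega)} + I_{U_{ad}}(v) + \frac{1}{2\tau}\|v - u\|^2_{L^2(\Omega)} \right\}$. The indicator term restricts the minimization to $U_{ad}$, and on that set the smooth part $\frac{\alpha}{2}\|v\|^2 + \frac{1}{2\tau}\|v-u\|^2$ can be combined by completing the square: it equals $\frac{\alpha\tau+1}{2\tau}\left\|v - \frac{1}{\alpha\tau+1}u\right\|^2_{L^2(\Omega)}$ up to an additive constant independent of $v$. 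Hence the minimizer over $U_{ad}$ is the projection of $\frac{1}{\alpha\tau+1}u$ onto $U_{ad}$, which gives $\prox_{\tau G}(u) = \mathcal{P}_{U_{ad}}\left(\frac{1}{\alpha\tau+1}u\right)$. The pointwise formula for $\mathcal{P}_{U_{ad}}$ follows because $U_{ad}$ is defined by pointwise box constraints $a \le u(x) \le b$, so the $L^2$-projection decouples across $x$ and reduces to the scalar projection onto $[a,b]$, i.e. $\min\{\max\{v(x),a\},b\}$; I would note measurability of the result is automatic since $\mathcal{P}_{[a,b]}$ is Lipschitz.

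Next I would compute $\prox_{\sigma F^*}$. One route is to first find $\prox_{\sigma F}$: since $F(y) = \frac{1}{2}\|y - y_d\|^2_{L^2(\Omega)}$, minimizing $\frac{1}{2}\|v - y_d\|^2 + \frac{1}{2\sigma}\|v - y\|^2$ is a strongly convex quadratic with the explicit minimizer $\prox_{\sigma F}(y) = \frac{1}{1+\sigma}(y + \sigma y_d)$. Then apply the Moreau identity $y = \prox_{\sigma F}(y) + \sigma\,\prox_{\frac{1}{\sigma}F^*}\!\left(\frac{y}{\sigma}\right)$ to solve for $\prox_{\frac{1}{\sigma}F^*}$, and relabel parameters (replace $\sigma$ by $1/\sigma$ and rescale the argument) to land on $\prox_{\sigma F^*}(y) = \frac{1}{1+\sigma}(y - \sigma y_d)$. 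Alternatively — and perhaps more cleanly — I could use the closed form $F^*(p) = \frac{1}{2}\|p\|^2_{L^2(\Omega)} + \langle p, y_d\rangle_{L^2(\Omega)}$ already recorded in the excerpt and minimize $\frac{1}{2}\|v\|^2 + \langle v, y_d\rangle + \frac{1}{2\sigma}\|v - y\|^2$ directly; setting the derivative to zero gives $(1 + \tfrac{1}{\sigma})v = \tfrac{1}{\sigma}y - y_d$, i.e. $v = \frac{1}{1+\sigma}(y - \sigma y_d)$. I would present whichever derivation is shortest and cross-check the two against each other.

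There is essentially no hard obstacle here: both functionals are quadratic plus either an indicator of a box or a linear term, so every proximal map is either a projection or an affine map, and the only care needed is (i) completing the square correctly to expose the factor $\frac{1}{\alpha\tau+1}$ in $\prox_{\tau G}$, and (ii) bookkeeping the parameter substitution when invoking the Moreau identity for $\prox_{\sigma F^*}$. I would make sure the statement's scaling conventions for the proximal operator and the Moreau identity match the ones used (the paper writes $\prox_{\tau G}$, so $\prox_{\tau G}(u) = \argmin_v \{ \tau G(v) + \frac12\|v-u\|^2\}$, equivalently $\argmin_v\{G(v) + \frac{1}{2\tau}\|v-u\|^2\}$), since a mismatch there would flip $\alpha\tau$ versus $\alpha/\tau$. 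With these conventions pinned down, the computation is routine and the two displayed formulas follow.
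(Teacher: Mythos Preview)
Your proposal is correct and follows essentially the same approach as the paper, which simply states that the result follows ``by the Moreau identity $y = \prox_{\sigma F}(y) + \sigma \prox_{\frac{1}{\sigma} F^*}(y/\sigma)$ and some simple manipulations'' without giving further detail. Your write-up supplies exactly those simple manipulations (completing the square for $\prox_{\tau G}$, and either the Moreau identity or the explicit $F^*$ for $\prox_{\sigma F^*}$), so nothing is missing.
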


\Cref{prop:burgers-prox} shows that both $\prox_{\tau G}$ and $\prox_{\sigma F^*}$ admit closed-form solutions, and their evaluation involves only pointwise operations. Thus, they can be readily implemented and computed efficiently.

Regarding the evaluation of $S(2u^{k+1} - u^k)$ and $(S'(u^k))^*p^k$ in \eqref{eq:pdhg-nonlinear}, the operator $S: L^2(\Omega)\rightarrow L^2(\Omega)$ reduces to the solution operator of the state equation~\eqref{eq:burgers-equation}.
In particular, $S(2u^{k+1} - u^k)$ is the solution of~\eqref{eq:burgers-equation} with $u = 2u^{k+1} - u^k$.
On the other hand, $(S'(u^k))^*p^k$ can be specified as the solution of the adjoint equation given below.
\begin{proposition}\label{prop:burgers-adjoint}
Consider problem \eqref{eq:burgers-optctrl}--\eqref{eq:burgers-equation}. For any $u \in L^2(\Omega)$ and $p \in L^2(\Omega)$, if we let $y = S(u)$, then the solution of the following adjoint equation:
    \begin{equation}\label{eq:burgers-equation-adj}
             -\nu z'' - y z' = p  ~\text{in~} (0, 1), \quad
             z(0) = z(1) = 0,
    \end{equation}
    satisfies $z = (S'(u))^*p$.
\end{proposition}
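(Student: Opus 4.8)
The plan is to compute the Fréchet derivative $S'(u)$ of the control-to-state map for the stationary Burgers equation, then identify its Hilbert-space adjoint by an integration-by-parts argument, and finally recognize the resulting variational identity as the weak form of \eqref{eq:burgers-equation-adj}. Throughout, I would work with the weak formulation of \eqref{eq:burgers-equation} in $H_0^1(\Omega)$, which reads $\nu \int_0^1 y' \varphi' \, dx + \int_0^1 y y' \varphi \, dx = \int_0^1 u \varphi \, dx$ for all $\varphi \in H_0^1(\Omega)$; well-posedness of this (locally, or under the smallness/regularity assumptions already invoked from \cite{de2004comparison}) is what makes $S$ well-defined and differentiable.

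\textbf{Step 1: linearization.} Fix $u \in L^2(\Omega)$ and set $y = S(u)$. For a direction $h \in L^2(\Omega)$, write $\delta y = S'(u) h$. Differentiating the state equation formally (and justifying this by the implicit function theorem applied to the $C^1$ map $(y,u) \mapsto -\nu y'' + y y' - u$ from $H_0^1 \cap H^2$ to $L^2$, whose partial derivative in $y$ at $(y,u)$ is the linear elliptic operator $v \mapsto -\nu v'' + (yv)'$, which is an isomorphism), one finds that $w := S'(u) h$ solves the linearized equation
\[
-\nu w'' + (y w)' = h \quad \text{in } (0,1), \qquad w(0) = w(1) = 0,
\]
or in weak form $\nu \int_0^1 w' \varphi' \, dx + \int_0^1 (yw)' \varphi \, dx = \int_0^1 h \varphi \, dx$ for all $\varphi \in H_0^1(\Omega)$. (Here I have used $(yw)' = y'w + yw'$, which is the derivative of $yy'$ in the direction $w$.)

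\textbf{Step 2: the adjoint identity.} Let $p \in L^2(\Omega)$ be arbitrary and let $z$ solve \eqref{eq:burgers-equation-adj}, i.e. $-\nu z'' - yz' = p$ with $z(0)=z(1)=0$; in weak form $\nu \int_0^1 z' \varphi' \, dx - \int_0^1 y z' \varphi \, dx = \int_0^1 p \varphi \, dx$ for all $\varphi \in H_0^1(\Omega)$. The goal is to show $\langle S'(u) h, p \rangle_{L^2} = \langle h, z \rangle_{L^2}$ for every $h$, which by definition of the adjoint gives $(S'(u))^* p = z$. Starting from $\langle h, z\rangle = \int_0^1 h z\, dx$, substitute $h = -\nu w'' + (yw)'$ from Step 1 (with $w = S'(u)h$) and integrate by parts twice, using $w(0)=w(1)=z(0)=z(1)=0$ to kill all boundary terms: $\int_0^1 (-\nu w'') z \, dx = -\nu \int_0^1 w z'' \, dx$ and $\int_0^1 (yw)' z \, dx = -\int_0^1 yw z' \, dx$. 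Hence $\langle h, z\rangle = \int_0^1 w \,(-\nu z'' - y z') \, dx = \int_0^1 w p \, dx = \langle S'(u)h, p\rangle$, which is exactly the claim. Equivalently and more cleanly, test the linearized weak form with $\varphi = z$ and the adjoint weak form with $\varphi = w$, and observe that the two bilinear expressions coincide: $\nu \int w' z' + \int (yw)' z \, dx = \nu \int w' z' - \int y w z' \, dx$ after moving one derivative, using $\int (y'w + yw')z = \int y'wz + \int yw'z$ and $\int yw'z = -\int (yz)'w = -\int y'wz - \int yz'w$, so the $y'wz$ terms cancel and one is left with $-\int ywz'$.

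\textbf{Main obstacle.} The genuinely delicate points are not the integration by parts (which is routine once regularity is in hand) but rather (i) justifying Fréchet differentiability of $S$ and the precise form of the linearized equation — this requires knowing $S$ maps into a space regular enough that $yy'$ and $(yw)'$ make sense and that the linearized operator $v \mapsto -\nu v'' + (yv)'$ is invertible (nonuniqueness or lack of smallness could break this, but the existence theory cited from \cite{de2004comparison} supplies the needed framework); and (ii) checking that \eqref{eq:burgers-equation-adj} is itself well-posed for $p \in L^2$, i.e. that the formal adjoint operator $z \mapsto -\nu z'' - y z'$ with homogeneous Dirichlet data has a unique solution in $H_0^1 \cap H^2$ — this follows from the fact that the adjoint of an isomorphism is an isomorphism, so no new estimate is needed. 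I would therefore organize the write-up so that all these well-posedness facts are quoted or reduced to the implicit function theorem, leaving only the two-line adjoint computation as the substantive content.
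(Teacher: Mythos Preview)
Your proposal is correct and follows essentially the same route as the paper: linearize to obtain $-\nu w'' + (yw)' = h$ (equivalently $-\nu w'' + yw' + y'w = h$, which is the paper's Lemma~3.3), then test the linearized weak form with $z$ and the adjoint weak form with $w$ and match them via the identity $\int_0^1 (yw'z + y'wz)\,dx = -\int_0^1 ywz'\,dx$. The paper likewise defers Fr\'echet differentiability and well-posedness to standard arguments (citing \cite{troltzsch2010optimal}), so your identification of the ``main obstacle'' and its resolution also align with what the paper does.
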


The proof of \Cref{prop:burgers-adjoint} relies on the following lemma, which can be proved by the standard arguments in \cite{troltzsch2010optimal}.

\begin{lemma}\label{lem:burgers-sfd}
    The solution operator $S$ associated to \eqref{eq:burgers-equation} is Fr\'echet differentiable.
    Moreover, for any $u, v \in L^2(\Omega)$, $w := S'(u) v$ is given by the solution of the equation
    \begin{equation}\label{eq:burgers-equation-derivative}
             -\nu w'' + y w' + y' w = v ~ \text{in~} (0, 1), \quad
             w(0) = w(1) = 0,
    \end{equation}
    where $y = S(u)$ is the solution of \eqref{eq:burgers-equation} corresponding to $u$.
\end{lemma}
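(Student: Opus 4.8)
The plan is to derive Fr\'echet differentiability of $S$ from the implicit function theorem, and then to read off~\eqref{eq:burgers-equation-derivative} by differentiating the state equation. First I would fix the spaces $V:=H^2(\Omega)\cap H^1_0(\Omega)$ (a Hilbert space, with $\Omega=(0,1)$) and $L^2(\Omega)$, and introduce the constraint map $e\colon V\times L^2(\Omega)\to L^2(\Omega)$, $e(y,u):=-\nu y''+yy'-u$, so that $y=S(u)$ is precisely the solution of $e(y,u)=0$. In one dimension $V\hookrightarrow C^1(\bar\Omega)$, so the nonlinear term $y\mapsto yy'$ maps $V$ boundedly into $L^2(\Omega)$ and is the quadratic part of a bounded symmetric bilinear form; hence $e$ is $C^\infty$, with $e_y(y,u)w=-\nu w''+yw'+y'w$ and $e_u(y,u)v=-v$. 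That $S$ is well defined --- i.e.\ that $e(\cdot,u)=0$ has a unique solution in $V$ for each $u\in L^2(\Omega)$ --- is the known well-posedness of~\eqref{eq:burgers-equation}, which I take from~\cite{de2004comparison,troltzsch2010optimal}; elliptic regularity (from $y''\in L^2$) places this solution in $V$.

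The heart of the argument is to show that at each such pair $(y,u)=(S(u),u)$ the operator $L:=e_y(y,u)\colon V\to L^2(\Omega)$ is a Banach-space isomorphism. The key observation is the identity $-\nu w''+yw'+y'w=(-\nu w'+yw)'$, so that the equation $Lw=g$ with $w(0)=w(1)=0$ is equivalent, after one integration, to the first-order linear ODE $-\nu w'(x)+y(x)w(x)=C+\int_0^x g(s)\,ds$ on $(0,1)$ together with the two endpoint conditions. Using the strictly positive $C^1$ integrating factor $\mu(x):=\exp(-\tfrac1\nu\int_0^x y(s)\,ds)$, the condition $w(0)=0$ determines $w$ uniquely in terms of the free constant $C$ by variation of parameters, and the remaining condition $w(1)=0$ fixes $C$ uniquely, since the coefficient it multiplies is a nonzero multiple of $\int_0^1\mu>0$. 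This produces a unique $w\in H^1_0(\Omega)$; going back to the equation, $w''=\tfrac1\nu(yw'+y'w-g)\in L^2(\Omega)$, so $w\in V$, and the bound $\|w\|_{H^2}\le c\,\|g\|_{L^2}$ follows from the explicit formula (or from the bounded inverse theorem). Hence $L$ is a bounded linear bijection with bounded inverse. (Equivalently, one may use Lax--Milgram: a short integration by parts gives the G{\aa}rding inequality $\langle Lw,w\rangle=\nu\|w'\|_{L^2}^2+\tfrac12\int_\Omega y'w^2\ge\nu\|w'\|_{L^2}^2-\tfrac12\|y'\|_{L^\infty}\|w\|_{L^2}^2$, and the Fredholm alternative then reduces invertibility to injectivity, which is precisely the ODE computation just described.)

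With $e$ of class $C^1$, $e(S(u),u)=0$, and $e_y(S(u),u)$ an isomorphism for every $u$, the implicit function theorem yields, near each $u$, a $C^1$ map $u\mapsto\widehat S(u)$ with $e(\widehat S(u),u)=0$; uniqueness of the state forces $\widehat S=S$ locally, so $S$ is Fr\'echet differentiable. Differentiating the identity $e(S(u),u)=0$ in a direction $v\in L^2(\Omega)$ gives $e_y(S(u),u)\,S'(u)v=-e_u(S(u),u)v=v$, i.e.\ $w:=S'(u)v$ solves $-\nu w''+yw'+y'w=v$, $w(0)=w(1)=0$ with $y=S(u)$ --- which is~\eqref{eq:burgers-equation-derivative} --- and the invertibility of $L$ shows this is the unique such $w$.

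I expect the only real obstacle to be the isomorphism claim for the linearized operator, and specifically its injectivity: the advection--reaction operator $w\mapsto-\nu w''+yw'+y'w$ is nonsymmetric and the zeroth-order coefficient $y'$ has no definite sign, so a bare energy estimate does not close. The device that makes it routine is recognizing this operator as the exact derivative $(-\nu w'+yw)'$, which collapses the two-point boundary value problem to an explicitly solvable first-order ODE; everything else --- smoothness of $e$, the implicit-function bookkeeping, and extracting~\eqref{eq:burgers-equation-derivative} --- is standard, along the lines of~\cite{troltzsch2010optimal}.
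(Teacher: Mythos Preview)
Your proposal is correct and follows exactly the standard implicit-function-theorem route that the paper defers to (the paper does not give a proof but simply cites the standard arguments in \cite{troltzsch2010optimal}). Your explicit handling of the isomorphism of $e_y(y,u)$ via the exact-derivative identity $-\nu w''+yw'+y'w=(-\nu w'+yw)'$ is a nice touch that makes the injectivity transparent, but the overall strategy matches what the reference would do.
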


\begin{proof}[Proof of \Cref{prop:burgers-adjoint}]
    For any $u, v, p \in L^2(\Omega)$, we let $y = S(u)$ and $w = S'(u)v$, and denote by $z \in H^1_0(\Omega)$ the solution of \eqref{eq:burgers-equation-adj} with the selected $p$.
    It follows from \Cref{lem:burgers-sfd} that $w \in H^1_0(\Omega)$ is the solution of \eqref{eq:burgers-equation-derivative}, and in particular, $w$ satisfies the variational formulation
   \begin{equation}\label{eq:lem-sfd-veq}
       \int_0^1 \left(\nu w'\phi' + yw' \phi + y'w\phi\right) dx = \int_0^1 v\phi ~ dx, \quad \forall \phi \in H^1_0(\Omega),
   \end{equation}
    Taking $\phi = z$ in \eqref{eq:lem-sfd-veq}, we have
    \begin{equation}\label{eq:lem-burgers-adjoint-rhs}
        \int_0^1 \left(\nu w' z' + y w' z + y' wz \right) dx = \langle v, z \rangle_{L^2(\Omega)}.
    \end{equation}
    On the other hand, the variational form of the adjoint equation \eqref{eq:burgers-equation-adj} is
    $$ \int_0^1 \left( \nu z' \phi' - y z' \phi \right) dx = \int_0^1 p \phi ~ dx, \quad \forall \phi \in H^1_0(\Omega). $$
    Taking $\phi = w$ yields
    \begin{equation}\label{eq:lem-burgers-adjoint-lhs}
        \int_0^1 \left( \nu w' z' - y w z' \right) dx = \langle w, p \rangle_{L^2(\Omega)}.
    \end{equation}
    Using the identity
    $ \int_0^1 \left( yw'z + y'wz \right) dx = (ywz) \Big\vert_0^1 - \int_0^1 ywz' dx = - \int_0^1 ywz' dx, $
    together with \eqref{eq:lem-burgers-adjoint-rhs} and \eqref{eq:lem-burgers-adjoint-lhs}, we conclude that $ \langle S'(u) v, p \rangle_{L^2(\Omega)} = \langle v, z \rangle_{L^2(\Omega)} $ and thus complete the proof.
\end{proof}

We define $A:L^2(\Omega)\times L^2(\Omega)\rightarrow L^2(\Omega), (y,p)\mapsto z $ as the solution operator of the adjoint equation (\ref{eq:burgers-equation-adj}), i.e. $z=A(y,p)$.
Note that the state equation~\eqref{eq:burgers-equation} is a PDE with a single parameter $u$, while the adjoint equation~\eqref{eq:burgers-equation-adj} is a PDE with two parameters $y$ and $p$.
This allows the application of operator learning methods to approximate the solution operators $S$ and $A$ by neural networks and efficiently solve the state and adjoint equations.

\subsection{DeepONet and MIONet}
We now apply a DeepONet $\mathcal{S}_{\theta_s}$ and an MIONet $\mathcal{A}_{\theta_a}$, parameterized by $\theta_s$ and $\theta_a$, to approximate the solution operators $S$ and $A$, respectively.
The trained networks $\mathcal{S}_{\theta_s^*}$ and $\mathcal{A}_{\theta_a^*}$ provide two neural surrogate models for solving the state and adjoint equations in the primal-dual-based operator learning method for \eqref{eq:burgers-optctrl}--\eqref{eq:burgers-equation}.

Two types of DeepONets are proposed in \cite{lu2019deeponet}: the stacked DeepONet and the unstacked DeepONet, as illustrated in~\Cref{fig:deeponet}. Both the stacked and the unstacked variants do not restrict the branch and trunk nets to any specific network architecture. As $x\in \Omega$ is usually low-dimensional, a standard fully-connected neural network (FNN) structure suffices for the trunk net. The choice of the branch net depends on the structure of the input function $u$, and it can be chosen as a FNN,  a residual neural network, a convolutional neural network, or a graph neural network.

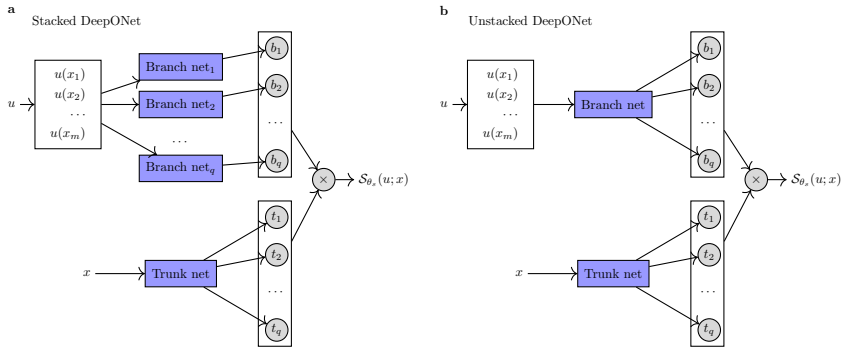
\begin{figure}[!ht]
    \centering

\begin{tikzpicture}[global scale =0.5]
\centering

\node at(0,11.5)     {\textbf{a}};
\node at(2,11.2)     {Stacked DeepONet};
\node at(0,9)     (7) {$u$};
\node at(1.5,9)     (8) [zbox]{$\begin{aligned}
    u(x_1)\\u(x_2)\\ \cdots\\u(x_m)
    \end{aligned}$};
\node at(4.5,10)    (9)[zbox,fill=blue!40]{Branch net$_1$};
\node at(4.5,9)    (10)[zbox,fill=blue!40]{Branch net$_2$};
\node at(4.5,8)        {$\cdots$};
\node at(4.5,7.3)  (11)[zbox,fill=blue!40]{Branch net$_q$};
\node at(4.5,4.5)  (12)[zbox,fill=blue!40]{Trunk net};
\node at(7,9)(13)[rectangle, minimum width =25pt, minimum height =110pt, inner sep=5pt,draw=black]  {} ;
\node at(7,4.5)(14)[rectangle, minimum width =25pt, minimum height =110pt, inner sep=5pt,draw=black]  {} ;
\node at(7.05,10.5) (15)[circle, minimum width =17pt, minimum height =17pt, inner sep=0.1pt, fill=gray!30,draw=black]{$b_1$};
\node at(7.05,9.5) (16)[circle, minimum width =17pt, minimum height =17pt, inner sep=0.1pt, fill=gray!30,draw=black]{$b_2$};
\node at(7.05,8.5)          {$\cdots$};
\node at(7.05,7.5) (17)[circle, minimum width =17pt, minimum height =17pt, inner sep=0.1pt, fill=gray!30,draw=black]{$b_q$};
\node at(7.05,6)   (18)[circle, minimum width =17pt, minimum height =17pt, inner sep=0.1pt, fill=gray!30,draw=black]{$t_1$};
\node at(7.05,5)   (19)[circle, minimum width =17pt, minimum height =17pt, inner sep=0.1pt, fill=gray!30,draw=black]{$t_2$};
\node at(7.05,4)          {$\cdots$};
\node at(7.05,3)   (20)[circle, minimum width =17pt, minimum height =17pt, inner sep=0.1pt, fill=gray!30,draw=black]{$t_q$};
\node at(8.3,7)    (21)[circle, minimum width =17pt, minimum height =17pt, inner sep=0.1pt, fill=gray!30,draw=black]{$\times$};
\node at(9.9,7)   (22){$\cS_{\theta_s}(u;x)$};
\node at(2,4.5)   (23){$x$};

\draw[->] (7) --(8);
\draw[->] (8) --(9);
\draw[->] (8) --(10);
\draw[->] (8) --(11);
\draw[->] (23) --(12);
\draw[->] (9) --(15);
\draw[->] (10) --(16);
\draw[->] (11) --(17);
\draw[->] (12) --(18);
\draw[->] (12) --(19);
\draw[->] (12) --(20);
\draw[->] (13) --(21);
\draw[->] (14) --(21);
\draw[->] (21) --(22);

\node at(11.5,11.5)     {\textbf{b}};
\node at(13.8,11.2)     {Unstacked DeepONet};
\node at(11.5,9)    (24) {$u$};
\node at(13,9)      (25) [zbox]{$\begin{aligned}
    u(x_1)\\u(x_2)\\ \cdots\\u(x_m)
    \end{aligned}$};
\node at(16,9)      (26)[zbox,fill=blue!40]{Branch net};
\node at(16,4.5)    (27)[zbox,fill=blue!40]{Trunk net};
\node at(18.5,9)    (28)[rectangle, minimum width =25pt, minimum height =110pt, inner sep=5pt,draw=black]  {} ;
\node at(18.5,4.5)  (29)[rectangle, minimum width =25pt, minimum height =110pt, inner sep=5pt,draw=black]  {} ;
\node at(18.55,10.5) (30)[circle, minimum width =17pt, minimum height =17pt, inner sep=0.1pt, fill=gray!30,draw=black]{$b_1$};
\node at(18.55,9.5)  (31)[circle, minimum width =17pt, minimum height =17pt, inner sep=0.1pt, fill=gray!30,draw=black]{$b_2$};
\node at(18.55,8.5)          {$\cdots$};
\node at(18.55,7.5)  (32)[circle, minimum width =17pt, minimum height =17pt, inner sep=0.1pt, fill=gray!30,draw=black]{$b_q$};
\node at(18.55,6)    (33)[circle, minimum width =17pt, minimum height =17pt, inner sep=0.1pt, fill=gray!30,draw=black]{$t_1$};
\node at(18.55,5)   (34)[circle, minimum width =17pt, minimum height =17pt, inner sep=0.1pt, fill=gray!30,draw=black]{$t_2$};
\node at(18.55,4)          {$\cdots$};
\node at(18.55,3)   (35)[circle, minimum width =17pt, minimum height =17pt, inner sep=0.1pt, fill=gray!30,draw=black]{$t_q$};
\node at(19.8,7)    (36)[circle, minimum width =17pt, minimum height =17pt, inner sep=0.1pt, fill=gray!30,draw=black]{$\times$};
\node at(21.4,7)      (37){$\cS_{\theta_s}(u;x)$};
\node at(13.5,4.5)  (38){$x$};

\draw[->] (24) --(25);
\draw[->] (25) --(26);
\draw[->] (26) --(30);
\draw[->] (26) --(31);
\draw[->] (26) --(32);
\draw[->] (27) --(33);
\draw[->] (27) --(34);
\draw[->] (27) --(35);
\draw[->] (28) --(36);
\draw[->] (29) --(36);
\draw[->] (36) --(37);
\draw[->] (38) --(27);

\end{tikzpicture}

\caption{The network structure of DeepONet, adopted from~\cite{lu2019deeponet}.}
\label{fig:deeponet}
\end{figure}

Here, we fix $\cS_{\theta_s}$ as the unstacked DeepONet and  employ fully-connected neural networks in both the branch net and the trunk net.
We approximate the solution of \eqref{eq:burgers-equation} by~$x(x-1)\cS_{\theta_s}(u; x)$ with $x\in \overline{\Omega}$, where $x(x-1)$ is introduced to enforce the boundary conditions $y(0)=y(1)=0$ in~\eqref{eq:burgers-equation}.
To train $\cS_{\theta_s}$, we assemble a training dataset $\{(u_i(x_j), x_j, y_i(x_j)) : 1 \leq i \leq N, 1 \leq j \leq m\}$, where each $u_i \in U$ is a sampled function from a certain distribution, $y_i = S(u_i)$ is the solution to~\eqref{eq:burgers-equation} with $u = u_i$, $N$ is the number of sampled functions, and $\{x_1, \ldots, x_m\}$ is a discretization of $\overline{\Omega}$.
The DeepONet $\cS_{\theta_s}$ is then trained by minimizing the following loss function:
\begin{equation}\label{eq:deeponet-loss}
    \cL_s(\theta_s) = \frac{1}{Nm}\sum_{i=1}^N \sum_{j=1}^m \left| x_j (x_j - 1) \cS_{\theta_s}(u_i; x_j) - y_i(x_j) \right|^2.
\end{equation}
Let $\theta_s^*$ be the trained parameter.
For each $u \in L^2(\Omega)$, the solution of \eqref{eq:burgers-equation} (i.e. $S(u)$) can thus be approximated by evaluating $\cS_{\theta_s^*}(u; x)$ for each $x \in \Omega$.

Note that DeepONet admits only one function input and does not apply to approximating the solution operator $A$, where multiple inputs $y$ and $p$ are involved. To overcome this issue, we consider the MIONet~\cite{jin2022mionet}, whose architecture with two input functions $y$ and $p$ is illustrated in~\Cref{fig:mionet}.
To summarize, the MIONet first applies different branch nets to each input function $u$ and a trunk net to the input spatial variable $x$, and then performs an element-wise product followed by a sum on the outputs of different subnetworks.

\begin{figure}[!ht]
    \centering

\begin{tikzpicture}[global scale =0.5]
\centering

\node at(4.5,10.0)    (24) {$y$};
\node at(4.5,12)      (25) [zbox]{$\begin{aligned}
    y(x_1)\\y(x_2)\\ \cdots\\y(x_m)
    \end{aligned}$};
\node at(9,10.0)    (44) {$p$};
\node at(9,12)      (45) [zbox]{$\begin{aligned}
    p(x_1)\\p(x_2)\\ \cdots\\p(x_m)
    \end{aligned}$};
\node at(4.5,15)      (26)[zbox,fill=blue!40]{Branch net of $y$};
\node at(9,15)    (43)[zbox,fill=blue!40]{Branch net of $p$};
\node at(13.5,15)    (27)[zbox,fill=blue!40]{Trunk net};
\node at(4.5,17.55)    (28)[rectangle, minimum width =110pt, minimum height =25pt, inner sep=5pt,draw=black]  {} ;
\node at(13.5,17.55)  (29)[rectangle, minimum width =110pt, minimum height =25pt, inner sep=5pt,draw=black]  {} ;
\node at(9,17.55)  (42)[rectangle, minimum width =110pt, minimum height =25pt, inner sep=5pt,draw=black]  {} ;
\node at(3,17.55) (30)[circle, minimum width =17pt, minimum height =17pt, inner sep=0.1pt, fill=gray!30,draw=black]{$b^{(y)}_1$};
\node at(4,17.55)  (31)[circle, minimum width =17pt, minimum height =17pt, inner sep=0.1pt, fill=gray!30,draw=black]{$b^{(y)}_2$};
\node at(5,17.55)          {$\cdots$};
\node at(6,17.55)  (32)[circle, minimum width =17pt, minimum height =17pt, inner sep=0.1pt, fill=gray!30,draw=black]{$b^{(y)}_q$};
\node at(7.5,17.55) (39)[circle, minimum width =17pt, minimum height =17pt, inner sep=0.1pt, fill=gray!30,draw=black]{$b^{(p)}_1$};
\node at(8.5,17.55)  (40)[circle, minimum width =17pt, minimum height =17pt, inner sep=0.1pt, fill=gray!30,draw=black]{$b^{(p)}_2$};
\node at(9.5,17.55)          {$\cdots$};
\node at(10.5,17.55)  (41)[circle, minimum width =17pt, minimum height =17pt, inner sep=0.1pt, fill=gray!30,draw=black]{$b^{(p)}_q$};
\node at(12,17.55)    (33)[circle, minimum width =17pt, minimum height =17pt, inner sep=0.1pt, fill=gray!30,draw=black]{$t_1$};
\node at(13,17.55)   (34)[circle, minimum width =17pt, minimum height =17pt, inner sep=0.1pt, fill=gray!30,draw=black]{$t_2$};
\node at(14,17.55)          {$\cdots$};
\node at(15,17.55)   (35)[circle, minimum width =17pt, minimum height =17pt, inner sep=0.1pt, fill=gray!30,draw=black]{$t_q$};
\node at(9,18.8)    (36)[circle, minimum width =17pt, minimum height =17pt, inner sep=0.1pt, fill=gray!30,draw=black]{$\odot$};
\node at(9,19.8)    (46)[circle, minimum width =17pt, minimum height =17pt, inner sep=0.1pt, fill=gray!30,draw=black]{$+$};
\node at(9,21)      (37){$\cA_{\theta_a}(y, p; x)$};
\node at(13.5,12.5)  (38){$x$};

\draw[->] (24) --(25);
\draw[->] (25) --(26);
\draw[->] (26) --(30);
\draw[->] (26) --(31);
\draw[->] (26) --(32);
\draw[->] (44) --(45);
\draw[->] (45) --(43);
\draw[->] (43) --(39);
\draw[->] (43) --(40);
\draw[->] (43) --(41);
\draw[->] (42) --(36);
\draw[->] (27) --(33);
\draw[->] (27) --(34);
\draw[->] (27) --(35);
\draw[->] (28) --(36);
\draw[->] (29) --(36);
\draw[->] (36) --(46);
\draw[->] (46) --(37);
\draw[->] (38) --(27);

\end{tikzpicture}

\caption{The structure of MIONet with two input functions $y$ and $p$. The symbol $\odot$ denotes the element-wise product.}
\label{fig:mionet}
\end{figure}
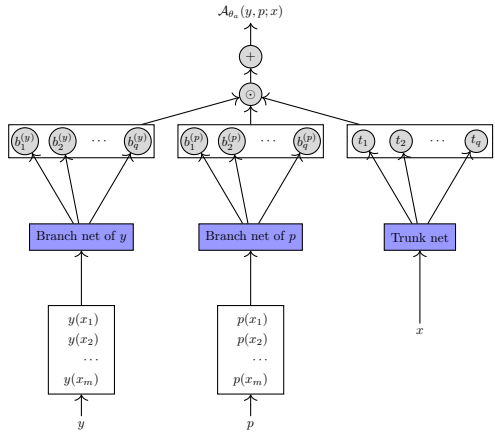

We apply an MIONet $\cA_{\theta_a}$ for the solution operator $A$ of the adjoint equation \eqref{eq:burgers-equation-adj}. The solution $z$ of \eqref{eq:burgers-equation-adj} is approximated by $x(x-1) \cA_{\theta_a}(y, p; x)$ with $x\in \overline{\Omega}$.
Here, $x(x-1)$ serves as a hard constraint to ensure the satisfaction of the boundary condition in~\eqref{eq:burgers-equation-adj}.
To train $\cA_{\theta_a}$, we first generate a training set $\{(y_i(x_j), p_i(x_j), x_j,  z_i(x_j))\}_{1 \leq i \leq N, 1 \leq j \leq m}$, where $y_i, p_i \in Y$ are sampled functions from certain distributions, and $z_i$ is the solution of~\eqref{eq:burgers-equation-adj} corresponding to $y = y_i$ and $p = p_i$.
Then, the MIONet $\cA_{\theta_a}$ is trained by minimizing the loss function
\begin{equation}\label{eq:mionet-loss}
    \cL_a(\theta_a) = \frac{1}{Nm}\sum_{i=1}^N \sum_{j=1}^m \left| x_j (x_j - 1) \cA_{\theta_a}(y_i, p_i; x_j) - z_i(x_j) \right|^2.
\end{equation}
Denote $\theta_a^*$ as the trained parameter.
The solution of \eqref{eq:burgers-equation-adj} (i.e. $A(y, p)$) can thus be approximated by evaluating $\cA_{\theta_a^*}(y, p; x)$ for each $x \in \Omega$.

\subsection{Primal-dual-based operator learning method for \texorpdfstring{\eqref{eq:burgers-optctrl}--\eqref{eq:burgers-equation}}{(3.1)-(3.2)}}

With the pre-trained neural solution operators $\cS_{\theta_s^*}$ and $\cA_{\theta_a^*}$, a primal-dual-based operator learning method for~\eqref{eq:burgers-optctrl}--\eqref{eq:burgers-equation} can be specified from \Cref{alg:pdhg-opl} and we list it compactly as \Cref{alg:burgers-pdhg-opl}.

\begin{algorithm}[!ht]

	\caption{A primal-dual-based operator learning method for \eqref{eq:burgers-optctrl}--\eqref{eq:burgers-equation}}
	\begin{algorithmic}[1]
	    \REQUIRE   Pre-trained neural solution operators  $\mathcal{S}_{\theta_s^*}$ and $\mathcal{A}_{\theta_a^*}$; initialization $u^0, p^0$; stepsizes $\tau, \sigma>0.$
	
        \FOR {$k = 0, 1, \ldots $}
            \STATE $y^k(x) = x(x-1)\cS_{\theta_s^*}(u^k; x)$.
            \STATE $u^{k+1}(x) = \min \left\{ \max \left\{ \frac{u^k(x) - \tau x(x-1) \cA_{\theta_a^*}(y^k, p^k; x)}{\alpha \tau + 1}, a \right\}, b \right\}$.
            \STATE $p^{k+1}(x) = \frac{1}{1 + \sigma} \left(p^k(x) + \sigma x(x-1) \cS_{\theta_s^*}(2u^{k+1} -u^k; x) - \sigma y_d(x)\right)$.
        \ENDFOR
	    \ENSURE Numerical solution $(\hat{u}, \hat{y}):=(u^{k+1}, x(x-1)\cS_{\theta_s^*}(u^{k+1}))$ to \eqref{eq:burgers-optctrl}--\eqref{eq:burgers-equation}.
	\end{algorithmic}
	\label{alg:burgers-pdhg-opl}
\end{algorithm}
Notably, the pre-trained neural solution operators  $\cS_{\theta_s^*}$ and $\cA_{\theta_a^*}$ are employed across different iterations in \Cref{alg:burgers-pdhg-opl}.  The evaluation of $\cS_{\theta_s^*}(u; x)$ and $\cA_{\theta_a^*}(y, p; x)$ is computationally cheaper than solving~\eqref{eq:burgers-equation} and~\eqref{eq:burgers-equation-adj} by traditional numerical solvers.
Moreover, it is clear that $\cS_{\theta_s^*}$ and $\cA_{\theta_a^*}$ can be reused for solving problem~\eqref{eq:burgers-optctrl}--\eqref{eq:burgers-equation} with different parameters $\alpha$ and $y_d$.
These verify the advantages claimed for the proposed approach in \Cref{sec:preliminaries}.

\subsection{Numerical experiments}\label{sec:burgers-numexp}
We now present some numerical results for solving the problem \eqref{eq:burgers-optctrl}--\eqref{eq:burgers-equation} to validate the effectiveness and efficiency of \Cref{alg:burgers-pdhg-opl}. We consider an example adopted from~\cite{de2004comparison}, with $\nu = 1/12$ and
$U_{ad} = \{u \in L^2(\Omega) : u \leq 0.3 \text{~~a.e.~in~} \Omega \}$.
We test on several different parameters $\alpha$ and $y_d$, which will be specified later.

We discretize $\overline{\Omega} = [0, 1]$ by $m = 101$ equi-spaced grid points $\{x_j\}_{j=1}^m$ and generate the training set for $\cS_{\theta_s}$ with functions discretized over this grid. We first sample functions $\{u_i\}_{i=1}^N$ with $N=5\times 10^3$ over a finer grid, from a Gaussian random field over $\overline{\Omega}$ subject to the homogeneous boundary condition:
\begin{equation}\label{s1}
    u_i \sim \mathcal{GP}(0, C(x_1, x_2)) \mid \{u_i(0) = u_i(1) = 0\},
\end{equation}
where $C(x_1, x_2) = r \exp \left(-\lVert x_1 - x_2 \rVert _2^2 / 2 \ell^2 \right)$, with $r =0.2$ and $l=0.2$, is the Gaussian covariance kernel.
We next down-sample each $u_i$ to the resolution $m = 101$, and solve the state equation~\eqref{eq:burgers-equation} with $u = u_i$ by the \texttt{bvp5c} function in MATLAB to get the solution $y_i$.
Then, $\{(u_i(x_j), x_j, y_i(x_j))\}_{1\leq i \leq N, 1 \leq j \leq m}$ forms a training dataset of $\cS_{\theta_s}$.
For the training set of $\cA_{\theta_a}$, we employ the functions $\{y_i := S(u_i)\}_{i=1}^N$, where $\{u_i\}_{i=1}^N$ are the functions generated for the training of $\cS_{\theta_s}$.
Additionally, we sample $N$ functions $\{p_i\}_{i=1}^N$ from the same Gaussian random field $\mathcal{GP}(0, C(x_1, x_2))$ but without imposing the homogeneous boundary condition.
We solve the adjoint equation~\eqref{eq:burgers-equation-adj} with $y = y_i$ and $p = p_i$ using the \texttt{bvp5c} solver, yielding the solutions $\{z_i\}_{i=1}^N$.
The training set of $\cA_{\theta_a}$ is set as $\{(y_i(x_j), p_i(x_j), x_j, z_i(x_j))\}_{1 \leq i \leq N, 1 \leq j \leq m}$. In total, $2N=10^4$ PDEs are required to be solved for generating the training sets for $\cS_{\theta_s}$ and $\cA_{\theta_a}$.

All the branch nets and trunk nets in $\mathcal{S}_{\theta_s}$ and $\mathcal{A}_{\theta_a}$ are set as fully-connected neural networks, each consisting of two layers with 101 neurons per layer and equipped with \texttt{ReLU} activation functions.
Then, to train $\cS_{\theta_s}$  and $\cA_{\theta_a}$, we minimize the loss functions  \eqref{eq:deeponet-loss} and \eqref{eq:mionet-loss} by  the Adam optimizer~\cite{kingma2014adam} with batch size $B:=100 \times 101$ for $2\times 10^3$ epochs.
The parameters ${\theta_s}$ and ${\theta_a}$ are initialized by the default \texttt{PyTorch} settings. The learning rate is first set to $0.001$ and then exponentially decayed by a factor of $0.8$ every $100$ iterations.
We report the training time for neural networks $\cS_{\theta_s}$ and $\cA_{\theta_a}$ in \Cref{tab:burger-model-acc}.

To validate the testing accuracy, we generate $10^4$ testing data respectively for the trained neural solution operators $\cS_{\theta_s^*}$ and $\cA_{\theta_a^*}$, following the same procedures for generating the training sets.
The testing results are presented in~\Cref{tab:burger-model-acc}. We observe that
the trained $\cS_{\theta_s^*}$ and $\cA_{\theta_a^*}$ produce satisfactory accuracy on the testing set, which validates the effectiveness of DeepONet and MIONet in solving (\ref{eq:burgers-equation}) and (\ref{eq:burgers-equation-adj}).

\begin{table}[!ht]
    \centering
    \small
    \caption{The training time and testing accuracy of the neural solution operators $\cS_{\theta_s^*}$ and $\cA_{\theta_a^*}$ measured in $L^2$-norm.
    Here, the absolute error is defined as $\|\hat{v} - v\|_{L^2(\Omega)}$, and the relative error is defined as $\|\hat{v} - v\|_{L^2(\Omega)} / \|v\|_{L^2(\Omega)}$, where $v(x)$ denotes the exact solution of (\ref{eq:burgers-equation}) or (\ref{eq:burgers-equation-adj}), and $\hat{v}(x)$ is the predicted value $\cS_{\theta_s^*}(u; x)$ or $\cA_{\theta_a^*}(y, p; x)$.}
    \begin{tabular}{ c c c c c c }
        \toprule
        ~ & Training & \multicolumn{2}{c}{Absolute $L^2$-error} & \multicolumn{2}{c}{Relative $L^2$-error} \\
        \cmidrule(lr){3-4} \cmidrule(lr){5-6}
        ~ & Time (s) &  Mean & SD & Mean & SD \\
        \midrule
        $\cS_{\theta_s^*}(u; x)$  & $367$ & $5.73 \times 10^{-4}$ & $3.79 \times 10^{-4}$ & $4.81 \times 10^{-3}$ & $9.31 \times 10^{-3}$\\
        $\cA_{\theta_a^*}(y, p; x)$ & $489$ & $6.71 \times 10^{-4}$ & $4.01 \times 10^{-4}$ & $4.68 \times 10^{-3}$ & $6.56 \times 10^{-3}$\\
        \bottomrule
    \end{tabular}
    \label{tab:burger-model-acc}
\end{table}

Next, we solve the optimal control problem~\eqref{eq:burgers-optctrl}--\eqref{eq:burgers-equation} by \Cref{alg:burgers-pdhg-opl} with the trained neural solution operators $\cS_{\theta_s^*}$ and $\cA_{\theta_a^*}$.
We initialize \Cref{alg:burgers-pdhg-opl} with $u^0 = 0$ and $p^0 = 0$.
The stepsizes are set to $\tau = 1.0$ and $\sigma = 0.2$.
The stopping criterion for \Cref{alg:burgers-pdhg-opl} is set to be
\begin{equation*}
    \max \left\{ \frac{\|u^{k+1} - u^k\|_{L^2(\Omega)}}{\max\{1, \|u^k\|_{L^2(\Omega)}\}}, \frac{\|p^{k+1} - p^k\|_{L^2(\Omega)}}{\max\{1, \|p^k\|_{L^2(\Omega)}\}} \right\} \leq 10^{-5}.
\end{equation*}
Besides, we also consider a heuristic for accelerating \Cref{alg:burgers-pdhg-opl}.
For the special case where $S$ is linear, it has been shown in \cite{he2012convergence} that \eqref{eq:pdhg-nonlinear} can be interpreted as an application of the proximal point algorithm, and this perspective inspires further updating $u$ and $p$ by relaxation steps with constant step sizes \cite{gol1979modified}.
The resulting algorithm usually numerically outperforms the original one, see e.g. \cite{biccari2023two,condat2023proximal,he2017algorithmic,he2012convergence}.
Inspired by \cite{gol1979modified,he2012convergence}, we apply the same strategy in this experiment by appending the following relaxation steps at each iteration of \Cref{alg:burgers-pdhg-opl}:
\begin{equation}\label{eq:relaxation-steps}
	u^{k+1} \gets u^k + \rho (u^{k+1} - u^k), \quad
	p^{k+1} \gets p^k + \rho (p^{k+1} - p^k),
\end{equation}
and term the resulting method as \Cref{alg:burgers-pdhg-opl}-R.

We test \Cref{alg:burgers-pdhg-opl} and \Cref{alg:burgers-pdhg-opl}-R on problem \eqref{eq:burgers-optctrl}--\eqref{eq:burgers-equation} with different sets of values for $\alpha$ and $y_d$.
To validate the accuracy, we compare the computed control $\hat{u}$ and state $\hat{y}$ with the solution $(u^*, y^*)$ obtained by the semi-smooth Newton method with finite element discretization (SSN-FEM) in \cite{de2004comparison}.
In particular, the absolute error $\epsilon_{abs}$ and relative error $\epsilon_{rel}$ of $\hat{u}, \hat{y}$ are defined as
\begin{equation*}
\begin{aligned}
	& \epsilon_{abs}(\hat{u}) := \|\hat{u} - u^*\|_{L^2(\Omega)}, \quad \epsilon_{rel}(\hat{u}) := \|\hat{u} - u^*\|_{L^2(\Omega)} / \|u^*\|_{L^2(\Omega)}, \\
	& \epsilon_{abs}(\hat{y}) := \|\hat{y} - y^*\|_{L^2(\Omega)}, \quad \epsilon_{rel}(\hat{y}) := \|\hat{y} - y^*\|_{L^2(\Omega)} / \|y^*\|_{L^2(\Omega)},
\end{aligned}
\end{equation*}
All the methods are evaluated on a uniform grid of size $m=2000$ over $\Omega$.
The computed controls and states of \Cref{alg:burgers-pdhg-opl} are illustrated in~\Cref{fig:burger-computed-ctrl-state}, and the numerical results are summarized in~\Cref{tab:burger-numerical-results-nopc}. For \Cref{alg:burgers-pdhg-opl}-R, the absolute and relative errors are the same as \Cref{alg:burgers-pdhg-opl}; hence we only report its number of iterations in \Cref{tab:burger-numerical-results-nopc}. We observe that both \Cref{alg:burgers-pdhg-opl} and \Cref{alg:burgers-pdhg-opl}-R achieve satisfactory accuracy.

\begin{figure}[!ht]
	\centering
  \subfloat[Computed control $u$ with $\alpha = 0.1$, $y_d = 0.3$]{\includegraphics[width=0.32\textwidth]{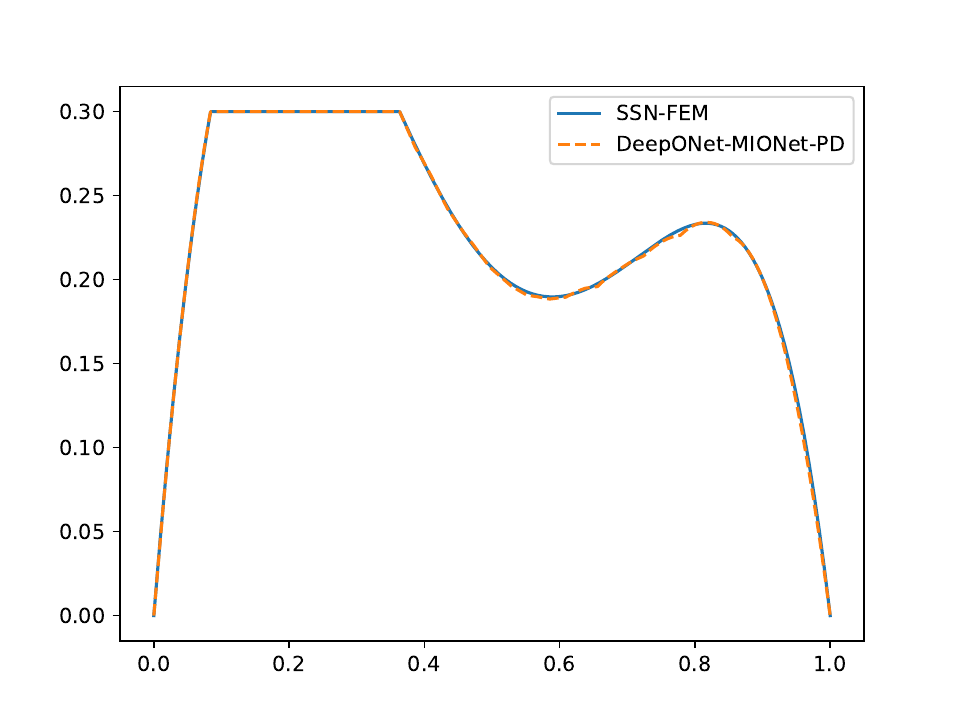}}\hfill
  \subfloat[Computed control $u$ with $\alpha = 0.01$, $y_d = 0.3$]{\includegraphics[width=0.32\textwidth]{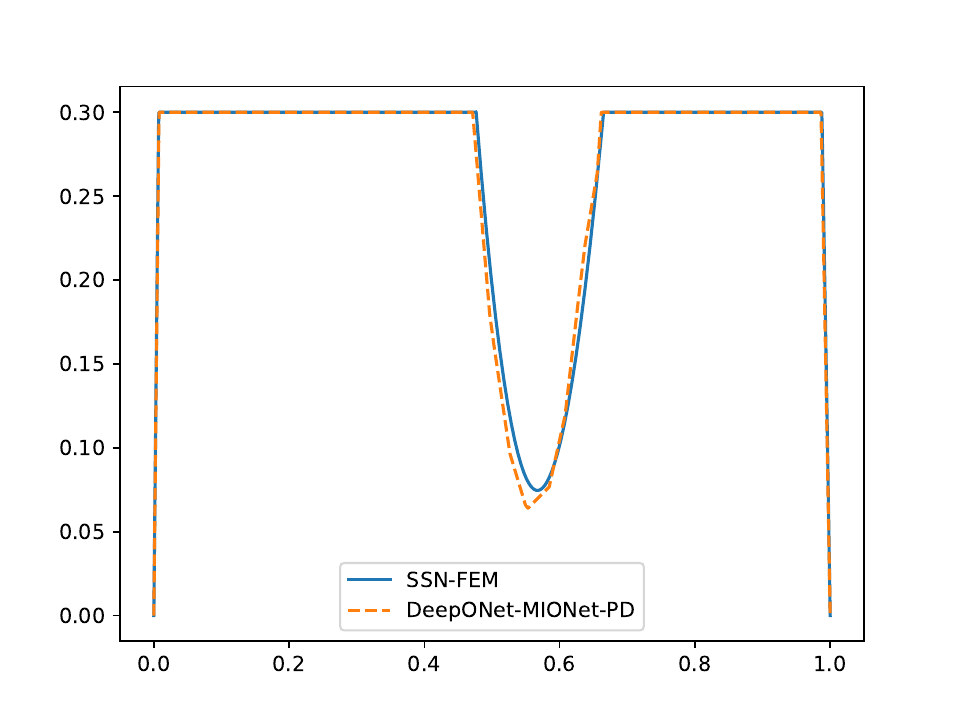}}\hfill
  \subfloat[Computed control $u$ with $\alpha = 0.1$, $y_d = 0.2$]{\includegraphics[width=0.32\textwidth]{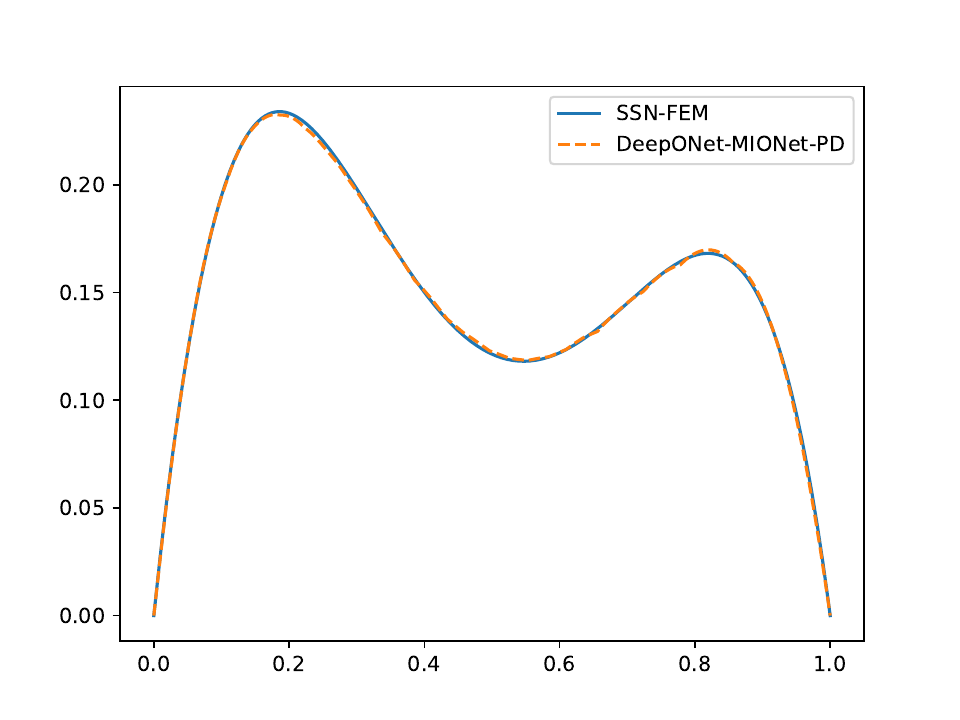}}\\
  \subfloat[Computed state $y$ with $\alpha = 0.1$, $y_d = 0.3$]{\includegraphics[width=0.32\textwidth]{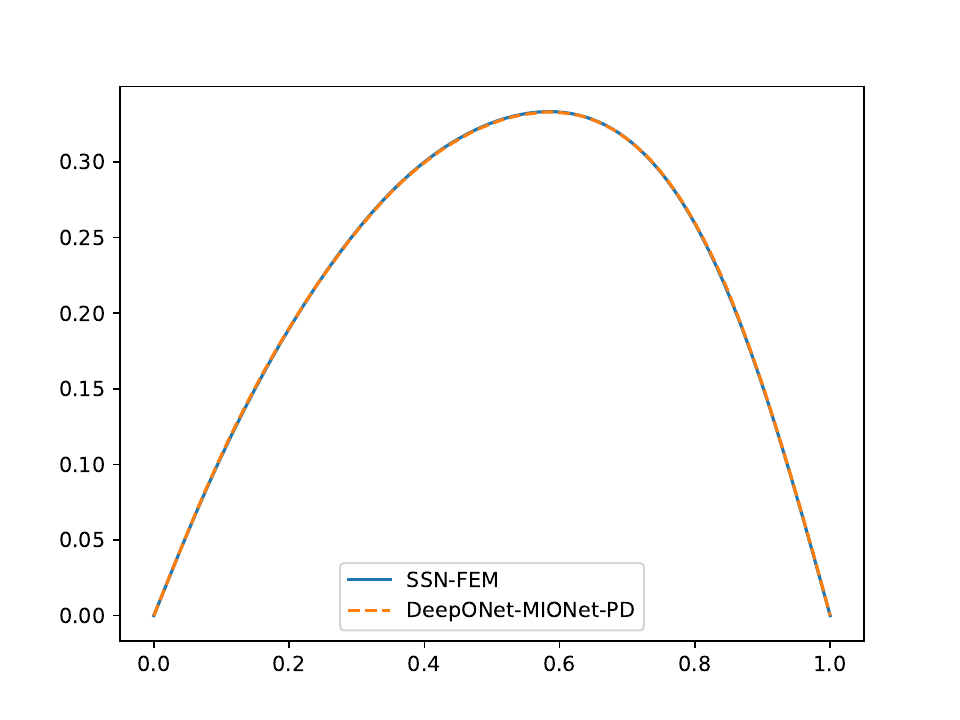}}\hfill
  \subfloat[Computed state $y$ with $\alpha = 0.01$, $y_d = 0.3$]{\includegraphics[width=0.32\textwidth]{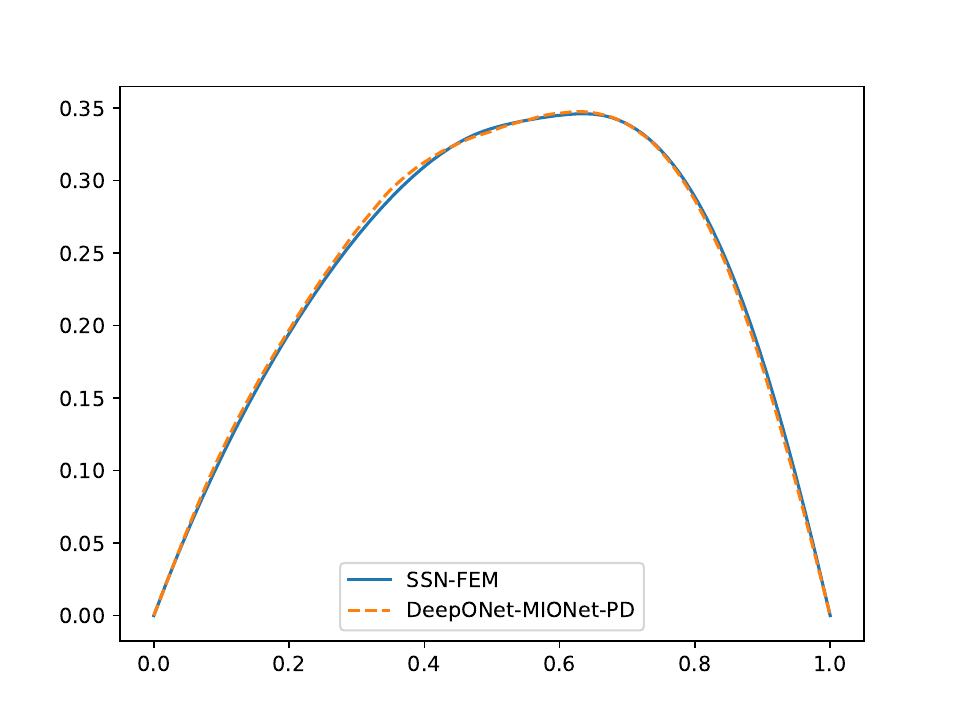}}\hfill
  \subfloat[Computed state $y$ with $\alpha = 0.1$, $y_d = 0.2$]{\includegraphics[width=0.32\textwidth]{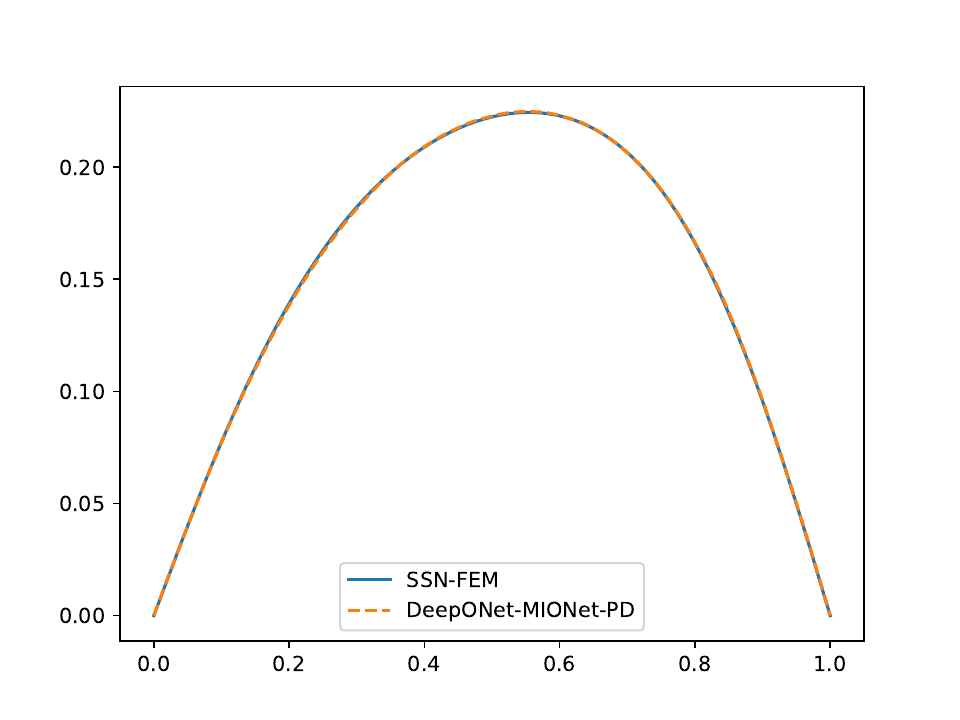}}
	\caption{Computed controls and states for~\eqref{eq:burgers-optctrl} by \Cref{alg:burgers-pdhg-opl} (DeepONet-MIONet-PD), compared with the baseline solver SSN-FEM}.
	\label{fig:burger-computed-ctrl-state}
\end{figure}

\begin{table}[!ht]
    \small
    \centering
    \caption{Numerical results of \Cref{alg:burgers-pdhg-opl} and \Cref{alg:burgers-pdhg-opl}-R for problem \eqref{eq:burgers-optctrl}--\eqref{eq:burgers-equation}. The number of iterations of \Cref{alg:burgers-pdhg-opl} and \Cref{alg:burgers-pdhg-opl}-R are reported before and after the slash (/), respectively.}
    \begin{tabular}[c]{ c c c c c c }
        \toprule
		~ & Iteration & $\epsilon_{abs}(\hat{u})$ & $\epsilon_{rel}(\hat{u})$ & $\epsilon_{abs}(\hat{y})$ & $\epsilon_{rel}(\hat{y})$ \\
        \midrule
        $\alpha = 0.1$, $y_d = 0.3$ & $85$ / $49$ & $1.33 \times 10^{-3}$ & $5.62 \times 10^{-3}$ & $4.89 \times 10^{-4}$ & $1.98 \times 10^{-3}$ \\
        $\alpha = 0.01$, $y_d = 0.3$ & \hspace{-0.5em}$509$ / $316$\hspace{-0.5em} & $7.97 \times 10^{-3}$ & $2.87 \times 10^{-2}$ & $3.02 \times 10^{-3}$ & $1.16 \times 10^{-2}$ \\
        $\alpha = 0.1$, $y_d = 0.2$ & $78$ / $45$ & $1.13 \times 10^{-3}$ & $7.07 \times 10^{-3}$ & $4.57 \times 10^{-4}$ & $2.73 \times 10^{-3}$ \\
        \bottomrule
    \end{tabular}
    \label{tab:burger-numerical-results-nopc}
    \normalsize
\end{table}

Additionally, we fix $\alpha=0.1$ and $y_d=0.3$, and test \Cref{alg:burgers-pdhg-opl}, \Cref{alg:burgers-pdhg-opl}-R, the SSN-FEM in \cite{de2004comparison}, and the ADMM-PINNs in \cite{song2024admm} (which is also reviewed in Section \ref{sec:admm}) on different grid resolutions.
Among the ADMM-PINNs proposed in \cite{song2024admm}, we only include the ADMM-OtA-PINNs in our comparison for succinctness, as it is numerically more accurate than the other methods discussed therein.
We employ 6 SSN iterations and 20 outer ADMM iterations, respectively.
We set $\gamma = 0.1$ in \eqref{eq:admm-pinn} for the ADMM-PINNs.
The subproblem \eqref{eq:admm-pinn-u} is solved in each iteration by PINNs with $5000$ training epochs and an Adam optimizer with a learning rate of $0.001$.

Some results are listed in \Cref{tab:burger-numerical-results-compare}. It is known that the computational complexity of SSN-FEM scales as $O(m^3)$, owing to the requirement of solving distinct size-$m$ linear systems introduced by FEM at each iteration. Hence, its computation time grows dramatically as the grid resolution $m$ increases.
On the other hand, thanks to the generalization ability of DeepONets, the computation time of Algorithms \ref{alg:burgers-pdhg-opl} and \ref{alg:burgers-pdhg-opl}-R is independent of the grid resolution.
Thus, both Algorithms \ref{alg:burgers-pdhg-opl} and \ref{alg:burgers-pdhg-opl}-R become much faster than SSN-FEM over high-resolution grids, specifically by two orders of magnitude on the finest grid.
The relative errors $\epsilon_{rel}(\hat{u})$ and $\epsilon_{rel}(\hat{y})$ show that Algorithms \ref{alg:burgers-pdhg-opl} and \ref{alg:burgers-pdhg-opl}-R are comparable to SSN-FEM in terms of numerical accuracy.
Meanwhile, compared with ADMM-PINNs, both Algorithms \ref{alg:burgers-pdhg-opl} and \ref{alg:burgers-pdhg-opl}-R are far more efficient for all grid resolutions, with only a marginally lower accuracy. Recall from Section \ref{sec:admm} that the ADMM-PINNs requires retraining neural networks at each iteration, which is the major reason for its slow performance.

\begin{table}[!ht]
	\centering
	\small
	\caption{Numerical comparisons for the SSN-FEM, ADMM-PINNs (the ADMM-OtA-PINNs variant), {\Cref{alg:burgers-pdhg-opl}}, and {\Cref{alg:burgers-pdhg-opl}}-R ($\rho=1.8$) evaluated on different grid resolutions $m$.}
	\begin{tabular}{c c c c c }
		\toprule
		\multirow{2}[3]{*}{$m$} & \multicolumn{4}{c}{Computation Time (s)} \\ [0.5ex]
		\cmidrule(lr){2-5}
		~ & SSN-FEM & ADMM-PINNs & {\Cref{alg:burgers-pdhg-opl}} &{\Cref{alg:burgers-pdhg-opl}-R} \\
		\midrule
		$100$ & $0.0669$ & $1641.5721$ & $0.2121$ & $0.1687$ \\
		$500$ & $0.1870$ & $1644.5827$ & $0.2304$ & $0.1771$ \\
		$1000$ & $0.4663$ & $1595.3219$ & $0.2358$ & $0.1812$ \\
		$2000$ & $2.3502$ & $1726.1786$ & $0.2418$ & $0.1855$ \\
		$5000$ & $10.0893$ & $1671.7742$ & $0.2354$ & $0.1739$ \\
		$10000$ & $57.8353$ & $1669.3511$ & $0.2492$ & $0.1726$  \\
		\midrule
		\multirow{2}[3]{*}{$m$} & \multicolumn{4}{c}{$\epsilon_{rel}(\hat{u})$}  \\ [0.5ex]
		\cmidrule(lr){2-5}
		~ & SSN-FEM & ADMM-PINNs & {\Cref{alg:burgers-pdhg-opl}} & {\Cref{alg:burgers-pdhg-opl}-R} \\
		\midrule
		$100$ & $\sim$ & $1.4018 \times 10^{-2}$ & $1.6164 \times 10^{-2}$ & $ 1.6127 \times 10^{-2}$ \\
		$500$ & $\sim$ & $2.8268 \times 10^{-3}$ & $6.7103 \times 10^{-3}$ & $ 6.7025 \times 10^{-3}$ \\
		$1000$ & $\sim$ & $1.4613 \times 10^{-3}$ & $5.9249 \times 10^{-3}$ & $ 5.9204 \times 10^{-3}$ \\
		$2000$ & $\sim$ & $7.7310 \times 10^{-4}$ & $5.6274 \times 10^{-3}$ & $ 5.6245 \times 10^{-3}$ \\
		$5000$ & $\sim$ & $3.2740 \times 10^{-4}$ & $5.4891 \times 10^{-3}$ & $5.4884 \times 10^{-3}$ \\
		$10000$ & $\sim$ & $3.0953 \times 10^{-4}$ & $5.4498 \times 10^{-3}$ & $5.4493 \times 10^{-3}$ \\
		\midrule
		\multirow{2}[3]{*}{$m$} & \multicolumn{4}{c}{$\epsilon_{rel}(\hat{y})$}  \\ [0.5ex]
		\cmidrule(lr){2-5}
		~ & SSN-FEM & ADMM-PINNs & {\Cref{alg:burgers-pdhg-opl}} & {\Cref{alg:burgers-pdhg-opl}-R} \\
		\midrule
		$100$ & $\sim$ & $1.6817 \times 10^{-3}$ & $2.8071 \times 10^{-3}$ & $ 2.8140 \times 10^{-3}$ \\
		$500$ & $\sim$ & $3.6155 \times 10^{-4}$ & $2.0543 \times 10^{-3}$ & $ 2.0569 \times 10^{-3}$ \\
		$1000$ & $\sim$ & $1.6218 \times 10^{-4}$ & $2.0024 \times 10^{-3}$ & $ 2.0045 \times 10^{-3}$ \\
		$2000$ & $\sim$ & $8.7682 \times 10^{-5}$ & $1.9810 \times 10^{-3}$ & $ 1.9826 \times 10^{-3}$ \\
		$5000$ & $\sim$ & $2.8611 \times 10^{-5}$ & $1.9688 \times 10^{-3}$ & $1.9702 \times 10^{-3}$ \\
		$10000$ & $\sim$ & $4.4054 \times 10^{-5}$ & $1.9650 \times 10^{-3}$ & $1.9664 \times 10^{-3}$ \\
		\bottomrule
	\end{tabular}
	\label{tab:burger-numerical-results-compare}
\end{table}

\section{Sparse bilinear control of parabolic equations}\label{sec:bp-opt-ctrl}

In this section, we showcase a time-dependent example of model~\eqref{eq:basic-problem} with sparsity regularization and delineate the implementation of \Cref{alg:pdhg-opl} in this context.
We consider the following sparse bilinear control problem:
\begin{equation}\label{eq:bilinparab-optctrl}
    \begin{aligned}
        \min_{u\in L^2(\Omega_T), y\in L^2(\Omega_T)} \quad &\frac{1}{2}\|y - y_d\|^2_{L^2(\Omega_T)} + \frac{\alpha}{2}\|u\|^2_{L^2(\Omega_T)} + \beta \|u\|_{L^1(\Omega_T)} + I_{U_{ad}}(u),
    \end{aligned}
\end{equation}
subject to the parabolic state equation
\begin{equation}\label{eq:bilinparab-equation}
         \partial_t y - \Delta y + uy = f \text{~in~} \Omega\times(0,T), \quad
          y = 0 \text{~on~} \partial \Omega \times (0, T), \quad
         y(0) = 0  \text{~in~} \Omega.
\end{equation}
Here, $\Omega \subset \mathbb{R}^d$ is a bounded domain ($d\geq 1$), $\partial\Omega$ is the Lipschitz continuous boundary of $\Omega$, and $\Omega_T := \Omega \times (0, T)$ with $0<T<+\infty$. The constants $\alpha > 0$, $\beta \geq 0$ are regularization parameters, $y_d \in L^2(\Omega_T)$ is a prescribed desired state, $f\in L^2(\Omega_T)$ is a given source term, and $I_{U_{ad}}(u)$ is the indicator function of the admissible set $U_{ad} := \{u \in L^\infty(\Omega_T): a \leq u(x, t) \leq b \text{~a.e.~in~} \Omega_T \}.$
The $L^1$-regularization term $\beta\|u\|_{L^1(\Omega_T)}$ is convex but not Fr\'echet differentiable.
Problem~\eqref{eq:bilinparab-optctrl}--\eqref{eq:bilinparab-equation} admits an optimal control with sparse support due to the presence of the $L^1$-regularization~\cite{casas2017review,stadler2009elliptic}.

\subsection{Primal-dual decoupling for~\texorpdfstring{\eqref{eq:bilinparab-optctrl}--\eqref{eq:bilinparab-equation}}{}}\label{se:pd_bi}

We delineate the primal-dual decoupling \eqref{eq:pdhg-nonlinear} of \eqref{eq:bilinparab-optctrl}--\eqref{eq:bilinparab-equation} by elaborating on the computations of the proximal operators $\prox_{\sigma F^*}$, $\prox_{\tau G}$, and the terms $S(2u^{k+1} - u^k)$ and $(S'(u^k))^*p^k$.
For this purpose, we first specify the functionals $F$ and $G$ defined in Section \ref{sec:pd-decoup} as
\begin{equation} \label{eq:bilinparab-FG}
    \begin{aligned}
        F(y)  = \frac{1}{2}\|y - y_d\|^2_{L^2(\Omega_T)}, ~
        G(u)  = \frac{\alpha}{2}\|u\|^2_{L^2(\Omega_T)} + \beta \|u\|_{L^1(\Omega_T)} + I_{U_{ad}}(u),
    \end{aligned}
\end{equation}
We then have the following results regarding $\prox_{\sigma F^*}$ and $\prox_{\tau G}$.
\begin{proposition}\label{prop:bilinparab-prox}
    Let $F, G$ be defined as in~\eqref{eq:bilinparab-FG}.
    Then, for any $\sigma > 0$, the proximal operator $\prox_{\sigma F^*}$ is given by
    \begin{equation}\label{eq:bilinparab-prox-F-conj}
            \prox_{\sigma F^*}(y) = \dfrac{1}{1 + \sigma} (y - \sigma y_d) \quad \forall y\in L^2(\Omega_T).
    \end{equation}
    For any $u\in L^2(\Omega_T)$, $\tau > 0$ and $(x, t)^\top \in \Omega_T$, the proximal operator $\prox_{\tau G}$ is given by
    \begin{equation}\label{eq:bilinparab-prox-G}
        \prox_{\tau G}(u)(x, t) = \begin{cases}
            \cP_{[a, b]} \left( \dfrac{u(x, t) - \tau \beta}{\alpha \tau + 1} \right)\quad & \text{~if~} u(x, t) \geq \tau \beta, \\
            \cP_{[a, b]} (0) & \text{~if~} |u(x, t)| < \tau \beta ,\\
            \cP_{[a, b]} \left( \dfrac{u(x, t) + \tau \beta}{\alpha \tau + 1} \right) \quad & \text{~if~} u(x, t) \leq -\tau \beta,
        \end{cases}
    \end{equation}
    where $\cP_{[a, b]}$ denotes the projection onto the interval $[a, b]$.
\end{proposition}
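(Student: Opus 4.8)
The plan is to handle the two proximal maps separately, exploiting that $F^*$ is a simple quadratic and that $G$ is an integral functional acting pointwise on $u$. For $\prox_{\sigma F^*}$ I would start from the explicit conjugate recorded in Section~\ref{sec:pd-decoup}, namely $F^*(p) = \frac12\|p\|_{L^2(\Omega_T)}^2 + \langle p, y_d\rangle_{L^2(\Omega_T)}$: then $\prox_{\sigma F^*}(y)$ is the unique minimizer of the smooth, strongly convex functional $p \mapsto \sigma(\frac12\|p\|^2 + \langle p, y_d\rangle) + \frac12\|p-y\|^2$, and setting its Fr\'echet derivative to zero, $\sigma p + \sigma y_d + (p-y) = 0$, gives $(1+\sigma)p = y - \sigma y_d$, which is \eqref{eq:bilinparab-prox-F-conj}. (Alternatively one can mimic the proof of \Cref{prop:burgers-prox} via the Moreau identity; the computation is identical.)

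For $\prox_{\tau G}$ the key structural observation is that $G(u) = \int_{\Omega_T} g(u(x,t))\,d(x,t)$ with the scalar function $g(s) := \frac{\alpha}{2}s^2 + \beta|s| + I_{[a,b]}(s)$, which is proper, lower semicontinuous, and convex on $\R$. Hence the problem defining $\prox_{\tau G}(u)$ is $\min_{w\in L^2(\Omega_T)} \int_{\Omega_T}[\tau g(w(x,t)) + \frac12(w(x,t)-u(x,t))^2]\,d(x,t)$, whose integrand is minimized pointwise at $w^\star(x,t) := \prox_{\tau g}(u(x,t))$. I would then check that $w^\star$ is admissible — measurable because $\prox_{\tau g}$ is Lipschitz continuous, and square-integrable because $\prox_{\tau g}$ takes values in the bounded interval $[a,b]$ and $\Omega_T$ has finite measure — so a pointwise-inequality argument yields $\prox_{\tau G}(u)(x,t) = \prox_{\tau g}(u(x,t))$ a.e., reducing the claim to the one-dimensional map $\prox_{\tau g}$.

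Finally, fixing $v\in\R$ and merging the quadratic terms, $\prox_{\tau g}(v) = \argmin_{a\le s\le b}\psi(s)$ with $\psi(s) := \frac{\alpha\tau+1}{2}s^2 - vs + \tau\beta|s|$. Since $\psi$ is strictly convex on $\R$, its minimizer over the interval $[a,b]$ equals $\cP_{[a,b]}$ applied to its unconstrained minimizer $s^\star$; a short sign analysis using $\partial|s|$ gives $s^\star = \frac{v-\tau\beta}{\alpha\tau+1}$ when $v\ge\tau\beta$, $s^\star = 0$ when $|v|<\tau\beta$, and $s^\star = \frac{v+\tau\beta}{\alpha\tau+1}$ when $v\le-\tau\beta$, and composing with $\cP_{[a,b]}$ (with $v = u(x,t)$) produces exactly the three cases of \eqref{eq:bilinparab-prox-G}. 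The only nonroutine point in this argument is the exchange of minimization over $L^2(\Omega_T)$ with pointwise minimization of the integrand; it rests precisely on $w^\star$ being measurable and in $L^2(\Omega_T)$, which in turn follows from the nonexpansiveness of $\prox_{\tau g}$ and the boundedness of $[a,b]$. Everything else is elementary first-order calculus.
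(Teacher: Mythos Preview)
Your proposal is correct. The paper does not actually supply a proof of \Cref{prop:bilinparab-prox}; it is stated and then followed only by the remark that both formulas ``involve only pointwise operations'' and are ``straightforward to compute,'' in the same spirit as the one-line justification (Moreau identity plus ``simple manipulations'') given before the analogous \Cref{prop:burgers-prox}. Your argument---direct first-order optimality for the smooth quadratic $F^*$, and the reduction of $\prox_{\tau G}$ to the scalar proximal map $\prox_{\tau g}$ via pointwise minimization followed by the soft-thresholding/projection computation---is exactly the standard derivation the paper is implicitly invoking, so there is nothing to contrast.
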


It is easy to see that \eqref{eq:bilinparab-prox-F-conj} and~\eqref{eq:bilinparab-prox-G} involve only pointwise operations, hence $\prox_{\sigma F^*}$ and $\prox_{\tau G}$ are straightforward to compute.

Note that $S: L^2(\Omega_T)\rightarrow L^2(\Omega_T)$ is now specified as the solution operator of the state equation~\eqref{eq:bilinparab-equation}.
Regarding the evaluation of $S(2u^{k+1} - u^k)$ and $(S'(u^k))^* p^k$, $S(2u^{k+1} - u^k)$ reduces to solving the state equation~\eqref{eq:bilinparab-equation} with $u = 2u^{k+1} - u^k$; and for $(S'(u^k))^* p^k$, we have the following result:
\begin{proposition}\label{prop:adjoint_bi}
   Consider problem \eqref{eq:bilinparab-optctrl}--\eqref{eq:bilinparab-equation}. For any $u \in L^2(\Omega_T)$ and $p \in L^2(\Omega_T)$, let $z\in L^2(\Omega_T)$ be the solution of the following adjoint equation:
    \begin{equation}\label{eq:bilinparab-equation-adj}
             -\partial_t z - \Delta z + uz = p  \text{~in~} \Omega\times (0,T),  \quad
             z = 0  \text{~on~} \partial \Omega \times (0, T),\quad
                         z(T) = 0 \text{~in~} \Omega.
    \end{equation}
    Then, we have $(S'(u))^* p = -(S(u)) z$.
\end{proposition}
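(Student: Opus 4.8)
The plan is to reproduce the argument of \Cref{prop:burgers-adjoint} in the time-dependent bilinear setting. First I would establish the parabolic analogue of \Cref{lem:burgers-sfd}: the solution operator $S$ of \eqref{eq:bilinparab-equation} is Fr\'echet differentiable, and for any $u, v \in L^2(\Omega_T)$ the derivative $w := S'(u) v$ is the unique weak solution of the linearized equation
\begin{equation*}
    \partial_t w - \Delta w + u w = -v y \text{~in~} \Omega_T, \quad w = 0 \text{~on~} \partial\Omega\times(0,T), \quad w(0) = 0 \text{~in~} \Omega,
\end{equation*}
where $y = S(u)$; this is obtained by differentiating the state equation with respect to $u$ and invoking the standard well-posedness and a priori estimates for linear parabolic equations with a bounded zeroth-order coefficient, exactly along the lines of \cite{troltzsch2010optimal}. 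The same theory, applied to the backward-in-time problem \eqref{eq:bilinparab-equation-adj} after the substitution $t \mapsto T - t$, guarantees that $z$ in the statement exists, is unique, and lies in $W(0,T) := \{\, \phi \in L^2(0,T; H^1_0(\Omega)) : \partial_t \phi \in L^2(0,T; H^{-1}(\Omega)) \,\} \hookrightarrow C([0,T]; L^2(\Omega))$, and likewise $w \in W(0,T)$.

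Next I would test the weak formulation of the linearized equation with $\phi = z$ and the weak formulation of \eqref{eq:bilinparab-equation-adj} with $\phi = w$, which gives
\begin{equation*}
    \int_{\Omega_T} \big( (\partial_t w) z + \nabla w \cdot \nabla z + u w z \big)\, dx\, dt = -\int_{\Omega_T} v y z \, dx\, dt,
\end{equation*}
\begin{equation*}
    \int_{\Omega_T} \big( -(\partial_t z) w + \nabla z \cdot \nabla w + u z w \big)\, dx\, dt = \int_{\Omega_T} p w \, dx\, dt .
\end{equation*}
Subtracting these two identities cancels the Dirichlet forms and the zeroth-order terms and leaves $\int_{\Omega_T} \big( (\partial_t w) z + (\partial_t z) w \big)\, dx\, dt = -\int_{\Omega_T} v y z \, dx\, dt - \int_{\Omega_T} p w \, dx\, dt$. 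By the integration-by-parts formula valid in $W(0,T)$, the left-hand side equals $\int_{\Omega_T} \partial_t (w z)\, dx\, dt = \int_\Omega \big( w(T) z(T) - w(0) z(0) \big)\, dx = 0$ because $w(0) = 0$ and $z(T) = 0$. Hence $\int_{\Omega_T} p w \, dx\, dt = - \int_{\Omega_T} v y z \, dx\, dt$, i.e. $\langle S'(u) v, p \rangle_{L^2(\Omega_T)} = \langle v, -y z \rangle_{L^2(\Omega_T)}$ for all $v \in L^2(\Omega_T)$, and by the definition of the adjoint operator this is precisely $(S'(u))^* p = -y z = -(S(u)) z$.

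The bulk of the work — existence, uniqueness, and energy estimates for the linearized and adjoint parabolic problems, together with the verification that the remainder $S(u + v) - S(u) - w$ is $o(\|v\|_{L^2(\Omega_T)})$, which relies only on the bilinear structure of the term $u y$ and on continuous dependence on data — is routine and parallels \cite{troltzsch2010optimal}. The one step that needs genuine care is the integration by parts in time used to evaluate $\int_{\Omega_T} \partial_t(w z)$: this is justified by the embedding $W(0,T) \hookrightarrow C([0,T]; L^2(\Omega))$ and the associated product rule $\frac{d}{dt} \langle w(t), z(t) \rangle_{L^2(\Omega)} = \langle \partial_t w(t), z(t) \rangle_{H^{-1}, H^1_0} + \langle w(t), \partial_t z(t) \rangle_{H^1_0, H^{-1}}$, so the endpoint values $w(0), w(T), z(0), z(T) \in L^2(\Omega)$ are well-defined. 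I expect this to be the only place where one must be slightly careful about function-space regularity, everything else being a direct computation.
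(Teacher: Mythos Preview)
Your proposal is correct and follows precisely the route the paper indicates: the paper omits the proof of \Cref{prop:adjoint_bi} and states that it ``can be proved in a similar way to \Cref{prop:burgers-adjoint},'' which is exactly what you do by deriving the linearized equation $\partial_t w - \Delta w + uw = -vy$, pairing its weak formulation against $z$ and that of \eqref{eq:bilinparab-equation-adj} against $w$, and using the integration-by-parts identity in $W(0,T)$ together with $w(0)=0$, $z(T)=0$ to obtain $\langle S'(u)v, p\rangle_{L^2(\Omega_T)} = \langle v, -yz\rangle_{L^2(\Omega_T)}$.
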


\Cref{prop:adjoint_bi} can be proved in a similar way to \Cref{prop:burgers-adjoint} and we thus omit the proof here for succinctness. Computing $S(2u^{k+1}-u^k)$ and $(S'(u^k))^* p^k$ thus reduces to solving \eqref{eq:bilinparab-equation} and~\eqref{eq:bilinparab-equation-adj}.

\subsection{Fourier neural operator (FNO)}\label{sec:fno}

We now apply the Fourier neural operator (FNO)~\cite{li2021fourier} to approximate the solution operators $S$ of \eqref{eq:bilinparab-equation}.
As illustrated in~\Cref{fig:fno}, the input of the FNO is a function $u$ discretized over a grid $\mathcal{D} \subset \Omega_T$ and represented as a vector in $\mathbb{R}^{|\mathcal{D}|}$.
The input function is first lifted to a vector $\cP\in \mathbb{R}^{|\mathcal{D} | \times m_p}$ by a linear layer, then transformed through several Fourier layers to generate a vector $\cQ\in\mathbb{R}^{|\mathcal{D} | \times m_p}$, and finally projected back to $\mathbb{R}^{|\mathcal{D}|}$.
In each Fourier layer, the transformations are simultaneously performed in the time domain and the frequency domain.
In the time domain, the input is transformed by applying a linear layer $W$.
Meanwhile, the input is carried into the frequency domain by a Fourier transform $\cF$, truncated into $k$ Fourier modes, transformed by an elementwise multiplication with a complex matrix $\mathcal{R}$, and then projected back to the time domain by the inverse Fourier transform $\cF^{-1}$.
When the grid $\mathcal{D}$ is uniform on $\Omega_T$, the Fourier transform $\mathcal{F}$ and its inverse $\mathcal{F}^{-1}$ can be efficiently computed by the fast Fourier transform algorithm.
We refer to~\cite{kovachki2023neural,li2021fourier} for a more detailed description.

\begin{figure}[!ht]
    \centering
\begin{tikzpicture}[global scale =0.40]
\centering

\node at(-0.5,1)      (1)[rectangle, minimum width =600pt, minimum height =230pt, inner sep=3pt,draw=black]  {};

\node at(-10,0.7)   (2) [circle,draw=black,fill=blue!30,minimum width =40pt, minimum height =40pt,font=\fontsize{20}{20}\selectfont]{$v$};

\node at(-2,-2)   (3) [circle,draw=black,fill=orange!55,minimum width =40pt, minimum height =40pt,font=\fontsize{20}{20}\selectfont]{$W$};

\node at(6,0.7)   (4) [circle,draw=black,fill=orange!100,minimum width =40pt, minimum height =40pt,font=\fontsize{20}{20}\selectfont]{$+$};

\node at(9,0.7)   (5) [circle,draw=black,fill=orange!100,minimum width =40pt, minimum height =40pt,font=\fontsize{20}{20}\selectfont]{$\sigma$};

\node at(-1.9,2.8)      (6)[rectangle, fill=LimeGreen!30,minimum width =360pt, minimum height =100pt, inner sep=5pt,draw=black,font=\fontsize{20}{20}\selectfont]  {};

\node at(-10.5,8)   (7) [circle,draw=black,fill=blue!30,minimum width =35pt, minimum height =35pt,font=\fontsize{20}{20}\selectfont]{$u$};

\node at(-8,8)   (8) [circle,draw=black,fill=orange!55,minimum width =35pt, minimum height =35pt,font=\fontsize{20}{20}\selectfont]{$\mathcal{P}$};

\node at(-5,8)      (9)[rectangle,fill=LimeGreen!30, minimum width =70pt, minimum height =30pt, inner sep=5pt,draw=black,font=\fontsize{16}{16}\selectfont]  {$Layer~1$};

\node at(-1,8)      (10)[rectangle,fill=LimeGreen!30, minimum width =70pt, minimum height =30pt, inner sep=5pt,draw=black,font=\fontsize{16}{16}\selectfont]  {$Layer~2$};

\node at(5,8)      (12)[rectangle,fill=LimeGreen!30, minimum width =70pt, minimum height =30pt, inner sep=5pt,draw=black,font=\fontsize{16}{16}\selectfont]  {$Layer~L$};

\node at(8,8)   (13) [circle,draw=black,fill=orange!55,minimum width =35pt, minimum height =35pt,font=\fontsize{20}{20}\selectfont]{$\mathcal{Q}$};

\node at(10,8)   (14) [circle,draw=black,fill=blue!30,minimum width =35pt, minimum height =35pt,font=\fontsize{20}{20}\selectfont]{$y$};

\node at(-7.5,3)   (15) [circle,draw=black,fill=orange!20,minimum width =40pt, minimum height =40pt,font=\fontsize{20}{20}\selectfont]{$\mathcal{F}$};
\node at(-2,3)   (16) [circle,draw=black,fill=orange!55,minimum width =40pt, minimum height =40pt,font=\fontsize{20}{20}\selectfont]{$\mathcal{R}$};
\node at(3.6,3)   (17) [circle,draw=black,fill=orange!20,minimum width =40pt, minimum height =40pt,font=\fontsize{18}{18}\selectfont]{$\mathcal{F}^{-1}$};
\node at(7.5,4)[font=\fontsize{16}{16}\selectfont]{Fourier Layer};

\filldraw (1.7,8) circle (.1) ;
\filldraw (2,8) circle (.1) ;
\filldraw (2.3,8) circle (.1) ;

\draw[eaxis,xscale=0.5] (-13,1.6) -- (-6,1.6);
\draw[eaxis,xscale=0.5] (-13,2.4) -- (-6,2.4);
\draw[eaxis,xscale=0.5] (-13,3.2) -- (-6,3.2);
\draw[eaxis,xscale=0.5] (-13,4) -- (-6,4);

\draw[eaxis,xscale=0.5] (-2.2,4) -- (4.8,4);
\draw[eaxis,xscale=0.5] (-2.2,3.2) -- (4.8,3.2);

\draw[eaxis,yscale=0.4] (-4.75,3.2) -- (-4.75,4.8);
\draw[eaxis,yscale=0.4] (-4.75,5.2) -- (-4.75,6.8);
\draw[eaxis,yscale=0.4] (-4.75,7.2) -- (-4.75,8.8);
\draw[eaxis,yscale=0.4] (-4.75,9.2) -- (-4.75,10.8);

\draw[eaxis,yscale=0.4] (0.5,7.2) -- (0.5,8.8);
\draw[eaxis,yscale=0.4] (0.5,9.2) -- (0.5,10.8);

\draw[elegant,black,line width=0.5,domain=-\num:\num,xscale=0.5,yscale=0.3,xshift=-270pt,yshift=380pt] plot(\x,{sin(\x r)});
\draw[elegant,black,line width=0.5,domain=-\num:\num,xscale=0.5,yscale=0.3,xshift=-270pt,yshift=300pt] plot(\x,{sin(2*\x r)});
\draw[elegant,black,line width=0.5,domain=-\num:\num,xscale=0.5,yscale=0.3,xshift=-270pt,yshift=230pt] plot(\x,{sin(3*\x r)});
\draw[elegant,black,line width=0.5,domain=-\num:\num,xscale=0.5,yscale=0.3,xshift=-270pt,yshift=150pt] plot(\x,{sin(4*\x r)});

\draw[elegant,black,line width=0.5,domain=-\num:\num,xscale=0.5,yscale=0.3,xshift=28pt,yshift=380pt] plot(\x,{-sin(\x r)});
\draw[elegant,black,line width=0.5,domain=-\num:\num,xscale=0.5,yscale=0.3,xshift=28pt,yshift=300pt] plot(\x,{-sin(2*\x r)});

\draw[->][line width=0.9 pt] (2) --(-8.3,2.9);
\draw[->][line width=0.9 pt] (4.4,2.9) --(4);
\draw[->][line width=1 pt] (4) --(5);
\draw[->][line width=0.9 pt] (2) --(3);
\draw[->][line width=0.9 pt] (3) --(4);
\draw[->][line width=0.9 pt](7) --(8);
\draw[->][line width=0.9 pt] (8) --(9);
\draw[->][line width=0.9 pt] (9) --(10);
\draw[->][line width=0.9 pt] (12) --(13);
\draw[->][line width=0.9 pt] (10) --(1.55,8);
\draw[->][line width=0.9 pt] (2.5,8) --(12);
\draw[->][line width=0.9 pt] (13) --(14);
\draw[densely dashed,line width=0.9 pt](-11,5) --(-2,7.5);
\draw[densely dashed,line width=0.9 pt](10,5) --(0,7.5);
\end{tikzpicture}
\caption{The structure of the Fourier neural operator~\cite{li2021fourier}.}
\label{fig:fno}
\end{figure}
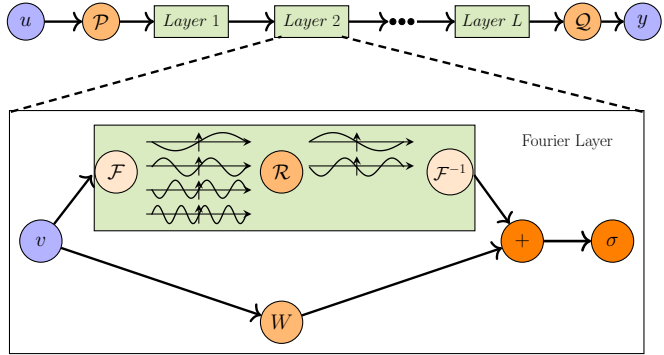

We construct an FNO $\cS_{\theta_s}$ and approximate the solution $y$ of \eqref{eq:bilinparab-equation} by $\cS_{\theta_s}(u)$.
To train $\cS_{\theta_s}$,
we generate a set of training data $\{((u_i, f_i), y_i)\}_{i=1}^N$, where $u_i, f_i \in U$ are randomly sampled functions, and each $y_i \in Y$ is the corresponding solution of \eqref{eq:bilinparab-equation}.
All the functions are discretized over a meshgrid denoted by $\overline{\mathcal{D}} \subset \overline{\Omega_T}$.
The neural network $\cS_{\theta_s}$ can be trained with the following loss function:
\begin{equation}\label{eq:mifno-loss}
	\cL(\theta_s) = \frac{1}{N}\sum_{i=1}^N \frac{\sum_{(x, t)^\top \in \overline{\mathcal{D}}} \left| \cS_{\theta_s}(u_i, f_i)(x, t) - y_i(x, t) \right|^2}{\sum_{(x, t)^\top \in \overline{\mathcal{D}}} \left|y_i(x, t)\right|^2}.
\end{equation}
Let $\theta_s^*$ be the parameter obtained by minimizing the loss function \eqref{eq:mifno-loss}.
Then, the solution $y$ of \eqref{eq:bilinparab-equation} can be evaluated by $\cS_{\theta_s^*}(u, f)$.

It is worth noting that the trained FNO $\cS_{\theta_s^*}$ can be re-employed to approximate the solution $z$ of~\eqref{eq:bilinparab-equation-adj}.
Indeed, by letting $\tilde{t} = T - t$, \eqref{eq:bilinparab-equation-adj} can be transformed into the following equation:
    \begin{equation*}
	\partial_{\tilde{t}} z - \Delta z + uz = p \text{~in~} \Omega\times (0,T), \quad
	 z = 0 \text{~on~} \partial \Omega \times (0, T),\quad
		 z(0) = 0  \text{~in~} \Omega,
\end{equation*}
which has the same structure as that of \eqref{eq:bilinparab-equation}.
Hence, with the FNO $\cS_{\theta_s^*}$, the solution $z$ of~\eqref{eq:bilinparab-equation-adj} can be evaluated by
    $\cS_{\theta_s^*}(u_{\text{rev}}, p_{\text{rev}})$ with $u_\text{rev}(x,t)=u(x,T-t)$ and $p_\text{rev}(x,t)=p(x,T-t)$.
Following from Proposition \ref{prop:adjoint_bi}, $S'(u^k)^*p^k$ can be approximated by $-\cS_{\theta_s^*}(u^k, f)\cS_{\theta_s^*}(u^k_\text{rev},p^k_\text{rev})$.
Therefore, it suffices to train a single FNO $\cS_{\theta_s^*}$ for the implementation of \Cref{alg:pdhg-opl} to problem \eqref{eq:bilinparab-optctrl}--\eqref{eq:bilinparab-equation}.

\subsection{A primal-dual-based operator learning method for~\texorpdfstring{\eqref{eq:sp-prob}--\eqref{eq:sp-equation}}{(5.1)-(5.2)}}

With the discussions in Sections \ref{se:pd_bi} and \ref{sec:fno}, we conclude in \Cref{alg:bilinparab-pdhg-opl} a primal-dual-based operator learning method for solving~\eqref{eq:bilinparab-optctrl}--\eqref{eq:bilinparab-equation}.

\begin{algorithm}[!ht]
	\caption{A primal-dual-based operator learning method for \eqref{eq:bilinparab-optctrl}--\eqref{eq:bilinparab-equation}}
	\begin{algorithmic}[1]
	    \REQUIRE Pre-trained neural solution operator $\cS_{\theta_s^*}$, initialization $u^0$, $p^0$, and stepsizes $\tau, \sigma>0$.
        \FOR {$k = 0, 1, \ldots $}
            \STATE $y^k = \cS_{\theta_s^*}(u^k, f)$.
            \STATE $u^k_{\text{rev}} (x, t) = u^k(x, T-t)$, \quad $p^k_{\text{rev}} (x, t) = p^k(x, T-t)$.
            \STATE $u^{k+1} = \prox_{\tau G} (u^k + \tau y^k \cS_{\theta_s^*}(u^k_{\text{rev}}, p^k_{\text{rev}}))$, where $\prox_{\tau G}$ is calculated by \eqref{eq:bilinparab-prox-G}.
            \STATE $p^{k+1} = \frac{1}{1 + \sigma} \left( p^k + \sigma \cS_{\theta_s^*}(2u^{k+1} -u^k, f) - \sigma y_d \right) $.
                   \ENDFOR
	    \ENSURE Numerical solution $(\hat{u}, \hat{y}):=(u^{k+1}, \cS_{\theta_s^*}(u^{k+1},f))$ to \eqref{eq:bilinparab-optctrl}--\eqref{eq:bilinparab-equation}.
	\end{algorithmic}
	\label{alg:bilinparab-pdhg-opl}
\end{algorithm}

\subsection{Numerical experiments}

In this subsection, we demonstrate the implementation of \Cref{alg:bilinparab-pdhg-opl} to problem \eqref{eq:bilinparab-optctrl}--\eqref{eq:bilinparab-equation} and validate its effectiveness and efficiency. We set $\Omega=(0,1)^2$, $T=1$,
$a = -1$, $b = 2$, and test on different parameters $\alpha$ and $\beta$ that will be specified later.
We generate the problem data $f$ and $y_d$ following the procedure introduced in~\cite[Section 6, Procedure 2]{schindele2017proximal} so that \eqref{eq:bilinparab-optctrl}--\eqref{eq:bilinparab-equation} admits a closed-form optimal solution.
Specifically, let
\begin{equation*}
    \begin{aligned}
        y^*(x, t)  = 5 \sqrt{\beta} t \sin(3\pi x_1) \sin(\pi x_2),\quad
        p^*(x, t)   = 5\sqrt{\beta} (t-1) \sin(\pi x_1) \sin(\pi x_2),
    \end{aligned}
\end{equation*}
and
\begin{equation*}
    \begin{aligned}
        & u^*(x, t) = \begin{cases}
            \max \left\{\frac{-p^*(x, t) y^*(x, t) + \beta}{\alpha}, a\right\} \quad & \text{~if~} p^*(x, t) y^*(x, t) > \beta, \\
            \min \left\{\frac{-p^*(x, t) y^*(x, t) - \beta}{\alpha}, b\right\} \quad & \text{~if~} p^*(x, t) y^*(x, t) < -\beta, \\
            0 \quad & \text{~otherwise}.
        \end{cases} \\
        & f = \partial_t  y^* - \Delta y^* + u^* y^*, \quad y_d  = y^* + (-\partial_t p^* - \Delta p^* + u^* p^*).
    \end{aligned}
\end{equation*}
Then, $(u^*, y^*)^\top$ is an optimal solution of problem \eqref{eq:bilinparab-optctrl}--\eqref{eq:bilinparab-equation}.

To generate a training set for $\cS_{\theta_s}$, we discretize $\overline{\Omega_T}$ by an $m \times m \times m_T$ equi-spaced grid $\overline{\mathcal{D}} = \{ ((x_1)_i, (x_2)_j, t_k) \in \overline{\Omega_T} : 1 \leq i, j \leq m, \ 1 \leq k \leq m_T \}$ with $m = 64$ and $m_T = 64$.
Using the Julia package \texttt{GuassianRandomFields}, we sample $N=2048$ functions $\{f_i\}_{i=1}^N$, discretized over $\overline{\Omega_T}$, from the Gaussian random field
    $f_i \sim \mathcal{GR}\left(0, C((x, t)^\top, (\tilde{x}, \tilde{t})^\top)\right)$
with the Gaussian covariance kernel $C((x, t)^\top, (\tilde{x}, \tilde{t})^\top) = 5 \exp (-\lVert (x, t)^\top - (\tilde{x}, \tilde{t})^\top \rVert _2^2 / 0.09 )$.
To obtain $\{u_i\}_{i=1}^N$, we first sample $N=2048$ functions $\{v_i\}_{i=1}^N$ according to $v_i \sim \mathcal{GR}(0, 2024(-\Delta + 9I)^{-1.5})$ with zero Dirichlet boundary conditions on the Laplacian.
Then, for any $(x, t)^\top \in \Omega_T$, we take
\begin{equation}\label{eq:u_con_bi}
    u_i(x, t) = \begin{cases}
        \max \left\{ \frac{v_i(x, t) - 3}{4}, 2 \right\} \quad & \text{~if~} v_i(x, t) > 3, \\
        \min \left\{ \frac{v_i(x, t) + 3}{4}, -1  \right\} \quad & \text{~if~} v_i(x, t) < 3, \\
        0 \quad & \text{~otherwise}.
    \end{cases}
\end{equation}
The constructed $u_i$ in \eqref{eq:u_con_bi} satisfies $u_i \in U_{ad}$ and is sparse, which is consistent with the property of $u^k$ in each iteration of \Cref{alg:bilinparab-pdhg-opl}.
For each generated $(u_i, f_i)$, we solve the state equation~\eqref{eq:bilinparab-equation} by the \texttt{solvepde} function in MATLAB's \texttt{Partial Differential Equation Toolbox} to get a solution $y_i$.
Then, $\{(u_i, f_i, y_i)\}_{1\leq i \leq N}$ forms a training set for $\cS_{\theta_s}$, for which $N=2048$ PDEs are required to be solved.

We construct the FNO $\cS_{\theta_s}$ with three hidden Fourier layers and take $m_p = 20$ as the lifting dimension.
In each Fourier layer, we set the number of truncated frequency modes $k=8$ and adopt the \texttt{GeLU} activation function.
As suggested in~\cite{li2021fourier}, we pad each non-periodic input with $8$ zero elements in each spatial-temporal dimension. We initialize all the neural networks following the default setting of \texttt{PyTorch}. To train $\cS_{\theta_s}$, we minimize the loss function~\eqref{eq:mifno-loss} by the Adam optimizer with a batch size of $8$ for $300$ epochs. We set the initial stepsize as $0.001$, and decay it exponentially by a factor of $0.5$ for every $50$ epochs.

To evaluate the trained neural solution operator $\cS_{\theta_s^*}$, we generate $256$ testing data $\{(u_i, f_i, y_i)\}_{i=1}^256$ following the same procedure as that for generating the training set.
The absolute and relative errors are reported in~\Cref{tab:bilinparab-model-acc}, which indicate that the trained neural solution operator $\cS_{\theta_s^*}$ achieves satisfactory accuracy in solving~\eqref{eq:bilinparab-equation} and~\eqref{eq:bilinparab-equation-adj}.

\begin{table}[!ht]
    \centering
    \small
    \caption{Testing accuracy of the trained neural solution operator $\cS_{\theta_s^*}$ (Training time: $9.65 \times 10^{3}$ seconds).}
    \begin{tabular}{ c c c }
        \toprule
      ~&$\|\cS_{\theta_s^*}(u)-y\|_{L^2(\Omega_T)}$&$\|\cS_{\theta_s^*}(u)-y\|_{L^2(\Omega_T)}$/$\|y\|_{L^2(\Omega_T)}$ \\
        \midrule
        Mean & $3.93 \times 10^{-4}$ & $3.53 \times 10^{-3}$ \\
        SD  & $1.73 \times 10^{-4}$ & $8.91 \times 10^{-4}$ \\
        \bottomrule
    \end{tabular}
    \label{tab:bilinparab-model-acc}
\end{table}

We then solve the optimal control problem \eqref{eq:bilinparab-optctrl}--\eqref{eq:bilinparab-equation} by \Cref{alg:bilinparab-pdhg-opl} with the trained neural solution operator $\cS_{\theta_s^*}$.
We initialize \Cref{alg:bilinparab-pdhg-opl} with $u^0 = 0$ and $p^0 = 0$ and set the stepsizes $\tau = 500.0$ and $\sigma = 0.4$.
The following termination condition is applied:
\begin{equation} \label{eq:term-cond1}
    \max \left\{ \frac{\|u^{k+1} - u^k\|_{L^2(\Omega_T)}}{\max\{1, \|u^k\|_{L^2(\Omega_T)}\}}, \frac{\|p^{k+1} - p^k\|_{L^2(\Omega_T)}}{\max\{1, \|p^k\|_{L^2(\Omega_T)}\}} \right\} \leq 10^{-5}.
\end{equation}
The absolute and relative errors of the computed state and control, together with the number of iterations, are reported in~\Cref{tab:bilinparab-computed-ctrl-state-acc}, where the absolute error $\epsilon_{abs}$ and the relative error $\epsilon_{rel}$ of $\hat{u}$ and $\hat{y}$ are defined as
\begin{equation*}
	\begin{aligned}
		& \epsilon_{abs}(\hat{u}) := \|\hat{u} - u^*\|_{L^2(\Omega_T)}, \quad \epsilon_{rel}(\hat{u}) := \|\hat{u} - u^*\|_{L^2(\Omega_T)} / \|u^*\|_{L^2(\Omega_T)}, \\
		& \epsilon_{abs}(\hat{y}) := \|\hat{y} - y^*\|_{L^2(\Omega_T)}, \quad \epsilon_{rel}(\hat{y}) := \|\hat{y} - y^*\|_{L^2(\Omega_T)} / \|y^*\|_{L^2(\Omega_T)}.
	\end{aligned}
\end{equation*}
We plot the computed control and the exact control in~\Cref{fig:bilinparab-computed-ctrl-state} for $\alpha = \beta = 0.01$ at $t = 0.25$, $t = 0.5$, and $t = 0.75$.
Similar to the numerical experiment in Section~\ref{sec:burgers}, we further test the combination of \Cref{alg:bilinparab-pdhg-opl} with the relaxation steps \eqref{eq:relaxation-steps}, which is termed \Cref{alg:bilinparab-pdhg-opl}-R.
We choose $\rho = 1.618$ and report the number of iterations required in~\Cref{tab:bilinparab-computed-ctrl-state-acc}.
The absolute and relative errors of \Cref{alg:bilinparab-pdhg-opl}-R are identical to those of \Cref{alg:bilinparab-pdhg-opl}.
From these results, we observe that \Cref{alg:bilinparab-pdhg-opl} and \Cref{alg:bilinparab-pdhg-opl}-R achieve satisfactory accuracy for solving the optimal control problem \eqref{eq:bilinparab-optctrl}--\eqref{eq:bilinparab-equation} with different testing parameters.

\begin{table}[!ht]
    \small
    \centering
    \caption{Numerical results of \Cref{alg:bilinparab-pdhg-opl} and \Cref{alg:bilinparab-pdhg-opl}-R for problem \eqref{eq:bilinparab-optctrl}--\eqref{eq:bilinparab-equation}. The number of iterations of \Cref{alg:bilinparab-pdhg-opl} and \Cref{alg:bilinparab-pdhg-opl}-R are reported before and after the slash (/), respectively.}
    \begin{tabular}[c]{ c c c c c c }
        \toprule
        ~ & \hspace{-0.5em}Iteration\hspace{-0.5em} & $\epsilon_{abs}(\hat{u})$ & $\epsilon_{rel}(\hat{u})$ & $\epsilon_{abs}(\hat{y})$ & $\epsilon_{rel}(\hat{y})$ \\
        \midrule
        $\alpha = 0.1$, $\beta = 0.01$ & $32$ / $19$ & $5.32 \times 10^{-4}$ & $6.08 \times 10^{-3}$ & $9.40 \times 10^{-4}$ & $6.59 \times 10^{-3}$ \\
        $\alpha = 0.01$, $\beta = 0.01$ & $33$ / $23$ & $4.87 \times 10^{-3}$ & $9.52 \times 10^{-3}$ & $1.01 \times 10^{-3}$ & $7.08 \times 10^{-3}$ \\
        \hspace{-0.3em}$\alpha = 0.001$, $\beta = 0.001$\hspace{-0.3em} & $55$ / $34$ & $1.55 \times 10^{-2}$ & $3.04 \times 10^{-2}$ & $6.84 \times 10^{-4}$ & $1.52 \times 10^{-2}$ \\
        \bottomrule
    \end{tabular}
    \label{tab:bilinparab-computed-ctrl-state-acc}
    \normalsize
\end{table}

\begin{figure}[!ht]
    \centering
    \subfloat[Computed control at $t = 0.25$]{%
        \includegraphics[width=0.33\textwidth]{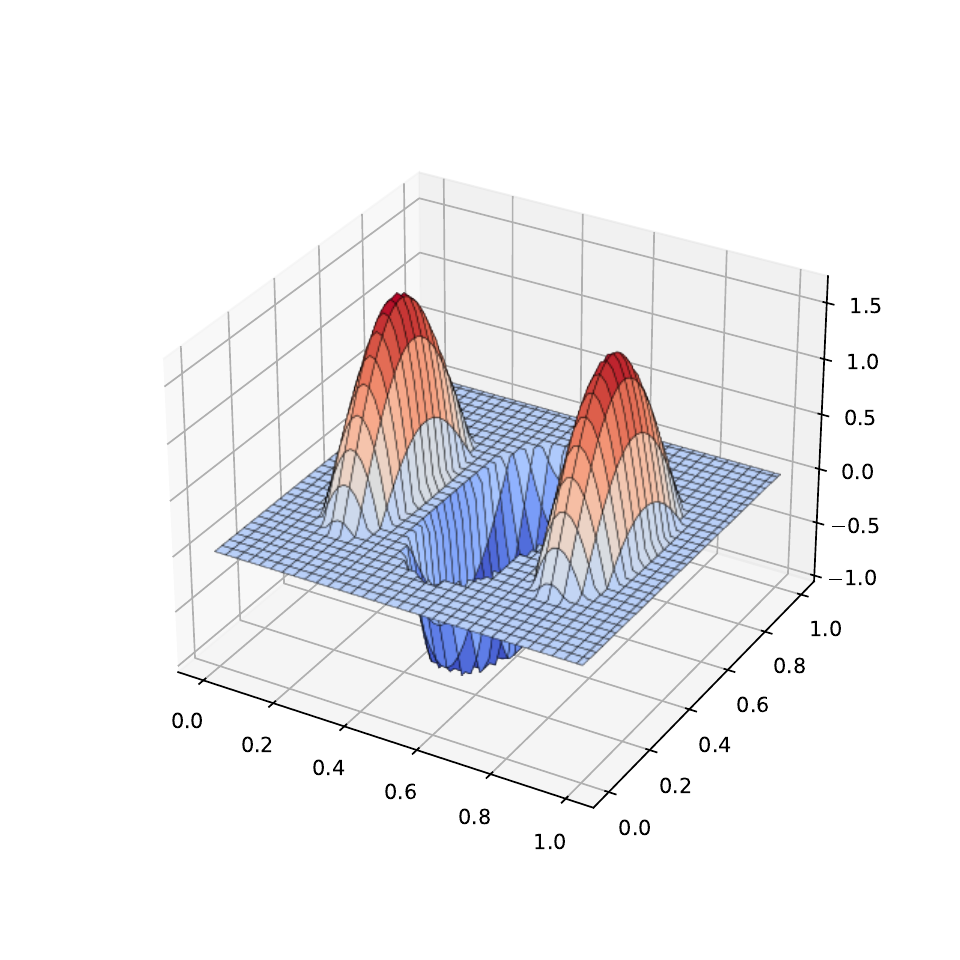}%
    }\hfill
    \subfloat[Computed control at $t = 0.5$]{%
        \includegraphics[width=0.33\textwidth]{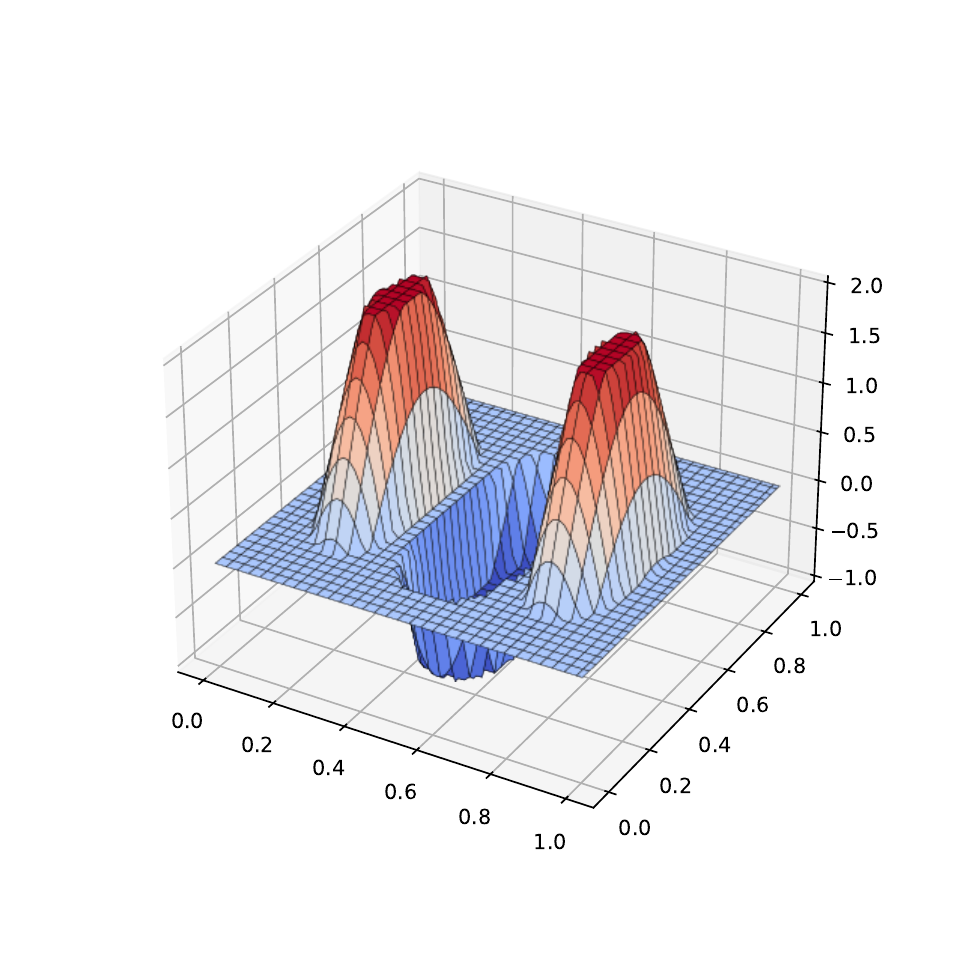}%
    }\hfill
    \subfloat[Computed control at $t = 0.75$]{%
        \includegraphics[width=0.33\textwidth]{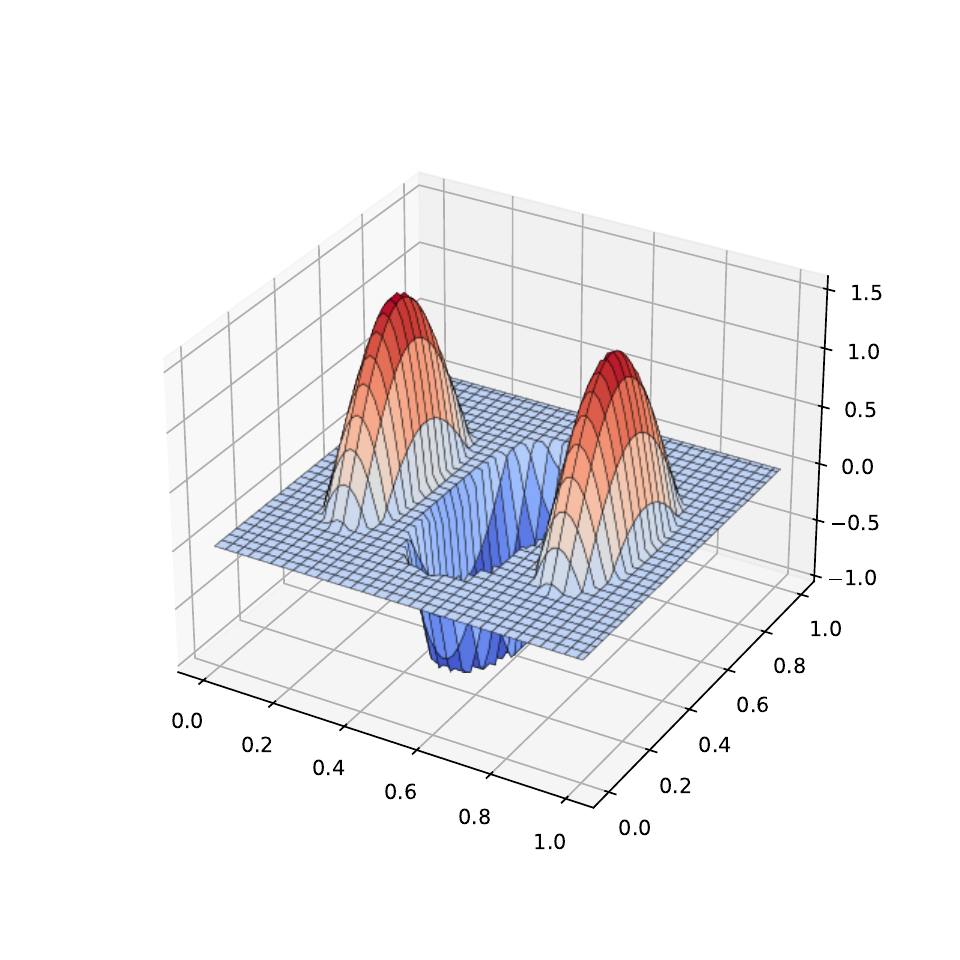}%
    }\hfill
    \subfloat[Error of control at $t = 0.25$]{%
        \includegraphics[width=0.32\textwidth]{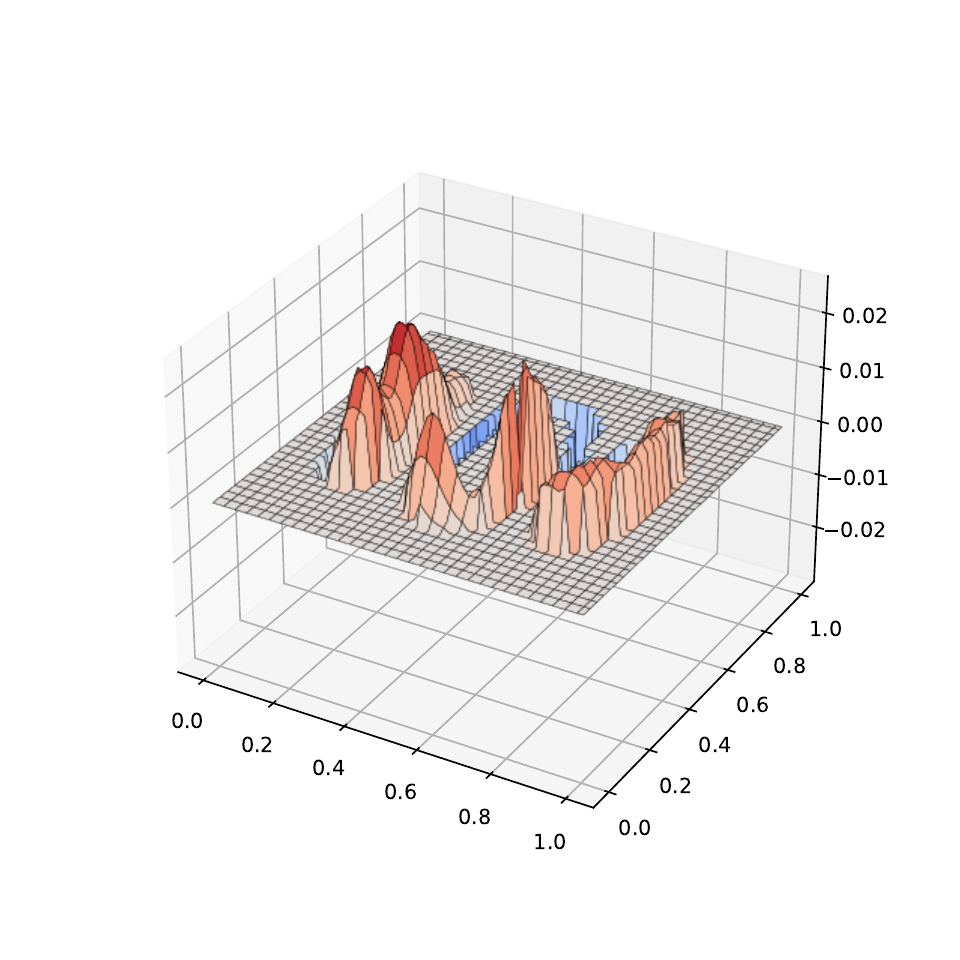}
    }\hfill
    \subfloat[Error of control at $t = 0.5$]{%
        \includegraphics[width=0.32\textwidth]{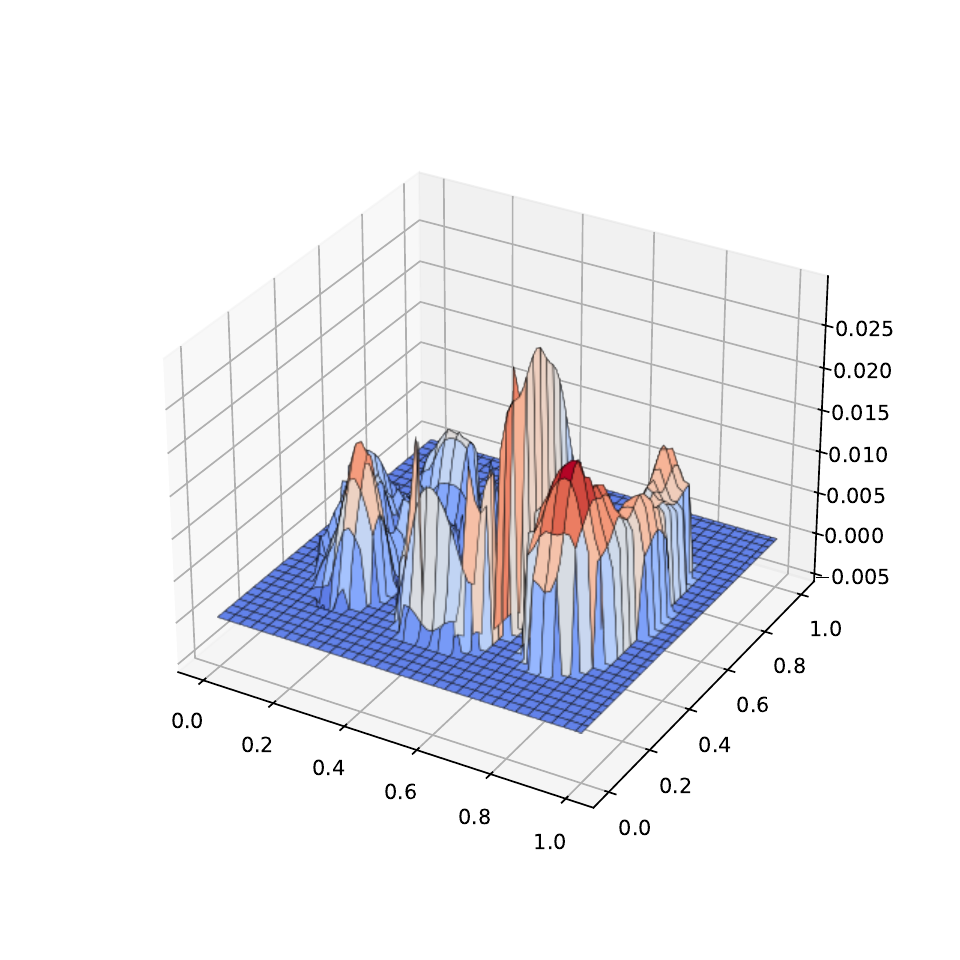}
    }\hfill
    \subfloat[Error of control at $t = 0.75$]{%
        \includegraphics[width=0.32\textwidth]{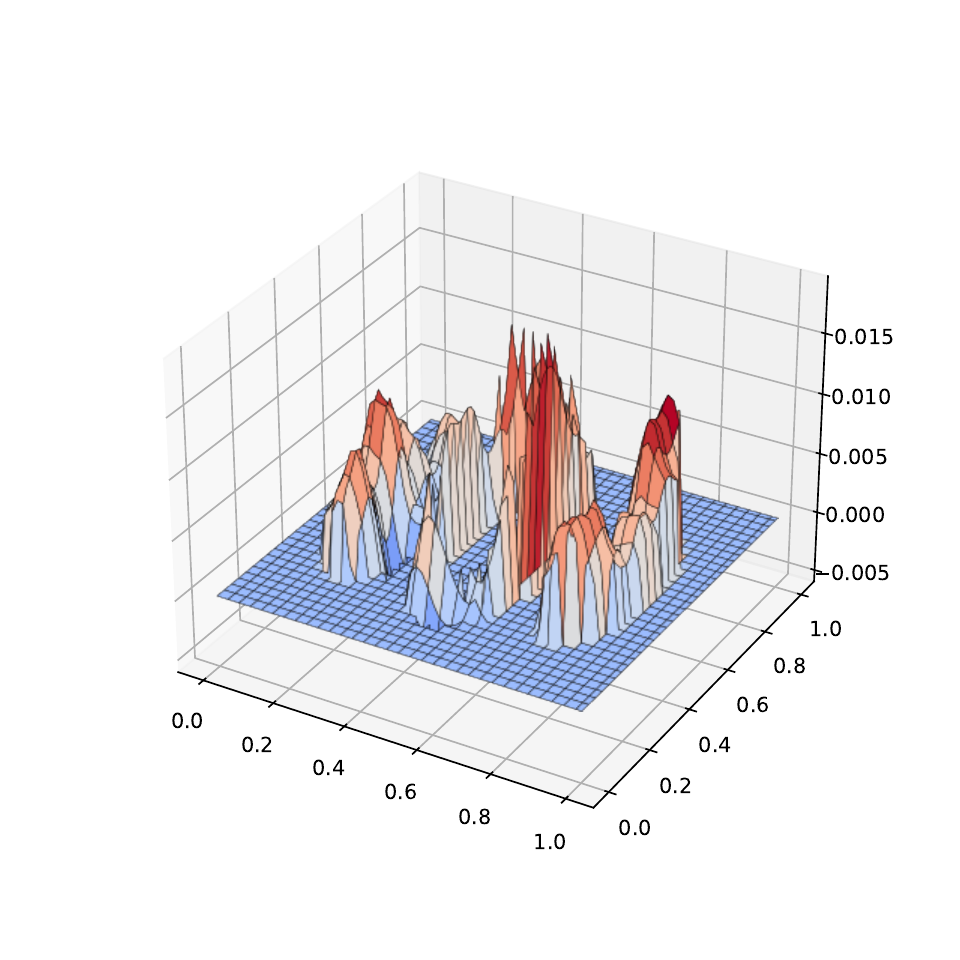}
    }\hfill
    \caption{Computed control and exact control for problem \eqref{eq:bilinparab-optctrl}--\eqref{eq:bilinparab-equation} with $\alpha=\beta=0.01$.}
    \label{fig:bilinparab-computed-ctrl-state}
\end{figure}

Finally, we fix $\alpha = \beta = 0.01$ and compare Algorithms \ref{alg:bilinparab-pdhg-opl}, \ref{alg:bilinparab-pdhg-opl}-R, the SSN-FEM, and the ADMM-FEM introduced in Section \ref{sec:admm} on different spatial grid resolutions $m$.
We employ $6$ SSN iterations and $10$ ADMM iterations \eqref{eq:admm-pinn}, respectively.
In each SSN iteration, the semismooth Newton equation is solved by the conjugate gradient (CG) method until a residual of $10^{-5}$ is reached.
For the ADMM-FEM, we set $\gamma = 0.01$ and apply the Barzilai-Borwein (BB) method \cite{barzilai1988two} with long stepsizes for the resulting subproblem \eqref{eq:admm-pinn-u} until a gradient norm of $10^{-5}$ is obtained. 
All the PDEs encountered in the SSN-FEM and ADMM-FEM are discretized by FEM with $m \times m$ grids, and the resulting linear systems are solved by the CG method.

The results are listed in \Cref{tab:bilinparab-computation-time}.
Both Algorithms \ref{alg:bilinparab-pdhg-opl} and \ref{alg:bilinparab-pdhg-opl}-R significantly outperform the SSN-FEM and ADMM-FEM in all tested resolutions.
Compared with the results in Section \ref{sec:burgers-numexp}, the time cost of the FEM-based method is further increased, since the current example is time-dependent and defined on a two-dimensional spatial domain, which increases the number and scale of the linear systems to be solved.
All the above results further validate that the proposed operator learning methods are promising for solving nonsmooth optimal control problems, especially when the underlying PDE is multi-dimensional and time-dependent.

\begin{table}[!ht]
    \centering
	\small
    \caption{Computation time (seconds) for the SSN-FEM, ADMM-FEM, \Cref{alg:bilinparab-pdhg-opl}, and \Cref{alg:bilinparab-pdhg-opl}-R ($\rho=1.618$), with respect to different spatial grid resolutions $m$.}
    \begin{tabular}{c c c c c }
		\toprule
		$m$ & SSN-FEM & ADMM-FEM & {\Cref{alg:bilinparab-pdhg-opl}} &{\Cref{alg:bilinparab-pdhg-opl}-R} \\
		\midrule
        $16$ & $5.0788$ & $6.4495$ & $0.5102$ & $0.4152$ \\
		$32$ & $13.5294$ & $15.9366$ & $0.6834$ & $0.5221$  \\
		$64$ & $38.6675$ & $46.7103$ & $0.8310$ & $0.6519$ \\
		$128$ & $235.3233$ & $360.1374$ & $2.5893$ & $1.8828$ \\
		$256$ & $2597.8904$ & $10138.0012$ & $9.4701$ & $6.8634$ \\
		\bottomrule
	\end{tabular}
    \label{tab:bilinparab-computation-time}
    \normalsize
\end{table}

\section{Optimal control of semilinear parabolic equations}\label{sec:sp-opt-ctrl}
In this section, we further apply \Cref{alg:pdhg-opl} to optimal control of semilinear parabolic equations.  Let $\Omega$ be a bounded domain of $\mathbb{R}^d$ $(d \geq 1)$ with $\partial \Omega$ as its
boundary, and let $\Omega_T:=\Omega\times(0,T)$ with $0<T<+\infty$.  We consider the following optimal control problem:
\begin{equation}\label{eq:sp-prob}
    \begin{aligned}
        \min_{u \in L^2(\Omega_T), y \in L^2(\Omega_T)} \quad &\frac{1}{2}\|y - y_d\|^2_{L^2(\Omega_T)} + \frac{\alpha}{2}\|u\|^2_{L^2(\Omega_T)} + \beta \|u\|_{L^1(\Omega_T)} + I_{U_{ad}}(u),
    \end{aligned}
\end{equation}
subject to the semilinear parabolic equation
\begin{equation}\label{eq:sp-equation}
         \partial_t y - \Delta y + R(y) = u \text{~in~} \Omega\times(0,T), \quad
         y = 0 \text{~on~} \partial \Omega \times (0, T),\quad
                 y(0) = 0  \text{~in~} \Omega.
\end{equation}
Here, the target $y_d \in L^2(\Omega_T)$ is given, $\alpha>0, \beta \geq 0$ are regularization parameters, $R$ is a Fr\'echet differentiable nonlinear operator given by $R(y) := (y - c_1)(y - c_2)(y - c_3),$
where $c_1, c_2, c_3 \in \mathbb{R}$.
The admissible set is defined as $U_{ad} := \{u \in L^\infty(\Omega_T): a \leq u(x,t) \leq b \text{~a.e.~in~} \Omega_T \}$.

\subsection{A primal-dual-based operator learning method for~\texorpdfstring{\eqref{eq:sp-prob}--\eqref{eq:sp-equation}}{(5.1)-(5.2)}}

We first note that the  primal-dual decoupling \eqref{eq:pdhg-nonlinear} can be applied to problem~\eqref{eq:sp-prob}--\eqref{eq:sp-equation} with
\begin{equation*}
	\begin{aligned}
		F(y)  = \frac{1}{2}\|y - y_d\|^2_{L^2(\Omega_T)}, ~
				G(u)  = \frac{\alpha}{2}\|u\|^2_{L^2(\Omega_T)} + \beta \|u\|_{L^1(\Omega_T)} + I_{U_{ad}}(u),
	\end{aligned}
\end{equation*}
Note that $F$ and $G$ here are the same as those defined in \Cref{sec:bp-opt-ctrl}, thus prox$_{\sigma F^*}$ and prox$_{\tau G}$ are precisely given by~\Cref{prop:bilinparab-prox}.
Moreover, by definition, $S(2u^{k+1}-u^k)$ is the solution of the state equation~\eqref{eq:sp-equation}. For the computation of $(S'(u^k))^* p^k$, we have the following result.
The proof of \Cref{prop:semilinear-adjoint} is similar to that of \Cref{prop:burgers-adjoint}, and we omit it for succinctness.
\begin{proposition}\label{prop:semilinear-adjoint}
    Consider problem \eqref{eq:sp-prob}--\eqref{eq:sp-equation}.
    For any $u, p \in L^2(\Omega_T)$, let $z$ be the solution of the following adjoint equation:
    \begin{equation}\label{eq:sp-equation-adj}
            -\partial_t z - \Delta z + R'(y) z = p  \text{~in~} \Omega_T \quad
            z = 0  \text{~on~} \partial \Omega \times (0, T), \quad
             z(T) = 0  \text{~in~} \Omega,
    \end{equation}
    where $y = S(u)$.
    Then, we have $z = (S'(u))^* p$.
\end{proposition}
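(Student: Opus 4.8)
The plan is to follow the proof of \Cref{prop:burgers-adjoint} almost line for line, with the Burgers nonlinearity replaced by the cubic $R$. The first ingredient is the semilinear analogue of \Cref{lem:burgers-sfd}: the solution operator $S$ of \eqref{eq:sp-equation} is Fr\'echet differentiable, and for any $u, v \in L^2(\Omega_T)$ the derivative $w := S'(u) v$ is the unique weak solution of the linearized parabolic equation
\begin{equation*}
\partial_t w - \Delta w + R'(y)\, w = v \text{~in~} \Omega_T, \quad w = 0 \text{~on~} \partial \Omega \times (0, T), \quad w(0) = 0 \text{~in~} \Omega,
\end{equation*}
where $y = S(u)$. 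I would obtain this by the standard implicit-function-theorem argument for semilinear parabolic control problems (see \cite{troltzsch2010optimal}): write the state equation as $e(y, u) = 0$ with $e(y, u) := \partial_t y - \Delta y + R(y) - u$ acting between suitable Bochner spaces; use the a priori bound $S(u) \in L^\infty(\Omega_T)$ (from the maximum principle together with the sign condition the cubic $R$ satisfies at infinity, and the boundedness of $U_{ad}$) to make the Nemytskii map $y \mapsto R(y)$ continuously differentiable near $S(u)$; check that the linearization $\partial_y e(S(u), u): w \mapsto \partial_t w - \Delta w + R'(S(u))\, w$ is a linear isomorphism onto $L^2(\Omega_T)$; and differentiate the identity $e(S(u), u) = 0$ to recover the displayed equation for $w$.

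With this in hand, the remainder is the usual parabolic pairing computation. Let $w = S'(u) v$ and let $z$ solve the adjoint equation \eqref{eq:sp-equation-adj}. Testing the linearized equation with $z$ and the adjoint equation \eqref{eq:sp-equation-adj} with $w$ gives
\begin{equation*}
\int_{\Omega_T} \left( (\partial_t w)\, z + \nabla w \cdot \nabla z + R'(y)\, w z \right) dx\, dt = \langle v, z \rangle_{L^2(\Omega_T)}
\end{equation*}
and
\begin{equation*}
\int_{\Omega_T} \left( -(\partial_t z)\, w + \nabla z \cdot \nabla w + R'(y)\, z w \right) dx\, dt = \langle p, w \rangle_{L^2(\Omega_T)},
\end{equation*}
respectively. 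Integrating by parts in time in the first identity, $\int_{\Omega_T} (\partial_t w)\, z \, dx\, dt = - \int_{\Omega_T} w\, (\partial_t z) \, dx\, dt + \int_\Omega w(T) z(T)\, dx - \int_\Omega w(0) z(0)\, dx$, and both endpoint terms vanish because $w(0) = 0$ and $z(T) = 0$. After this substitution the left-hand sides of the two identities coincide term by term (the forms $\int \nabla w \cdot \nabla z$ and $\int R'(y) w z$ are symmetric in $w$ and $z$, and $-\int w\, \partial_t z$ equals $-\int (\partial_t z)\, w$), so $\langle v, z \rangle_{L^2(\Omega_T)} = \langle p, w \rangle_{L^2(\Omega_T)} = \langle S'(u) v, p \rangle_{L^2(\Omega_T)}$. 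Since this holds for every $v \in L^2(\Omega_T)$, we conclude $z = (S'(u))^* p$.

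The hard part will not be the pairing argument but the functional-analytic bookkeeping behind the first paragraph and the time integration by parts. Because $R'(y) = 3 y^2 - 2(c_1 + c_2 + c_3) y + (c_1 c_2 + c_1 c_3 + c_2 c_3)$ need not be nonnegative, the bilinear forms associated with the linearized and adjoint equations are coercive only up to a zeroth-order perturbation, so existence, uniqueness, and the parabolic regularity ($w, z \in W(0,T)$) that makes the endpoint values $w(0), w(T), z(0), z(T)$ meaningful must be obtained through a G{\aa}rding-type inequality combined with the a priori $L^\infty(\Omega_T)$ bound on $y = S(u)$, or equivalently via an exponential time rescaling $w \mapsto e^{-\lambda t} w$. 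Once this is arranged exactly as for the state equation itself, everything else parallels the proof of \Cref{prop:burgers-adjoint} verbatim.
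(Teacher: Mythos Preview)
Your proposal is correct and follows exactly the route the paper indicates: the paper itself omits the proof, stating only that it ``is similar to that of \Cref{prop:burgers-adjoint},'' and your argument carries out precisely that parallel---linearized equation for $w = S'(u)v$, adjoint equation for $z$, variational testing, and integration by parts in time using $w(0)=0$, $z(T)=0$. Your additional remarks on the functional-analytic bookkeeping (the $L^\infty$ bound on $y$, G{\aa}rding-type coercivity for the linearized operator with possibly sign-indefinite $R'(y)$, and the resulting $W(0,T)$ regularity) go beyond what the paper spells out but are the right ingredients to make the pairing computation rigorous.
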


Thus, the computation of $S(2u^{k+1}-u^k)$ and $(S'(u^k))^* p^k$ amounts to solving \eqref{eq:sp-equation} and its adjoint equation \eqref{eq:sp-equation-adj}, respectively.
We now construct two FNOs $\cS_{\theta_s}$ and $\cA_{\theta_a}$, parameterized by $\theta_s$ and $\theta_a$, and approximate the solutions
$y$ and $z$ of  \eqref{eq:sp-equation}~and~\eqref{eq:sp-equation-adj}  via
$h_s\cS_{\theta_s}(u)$ and $h_a \cA_{\theta_a}(y, p)$, respectively.
Here, the auxiliary functions $h_s, h_a: \overline{\Omega} \times [0,T] \rightarrow \mathbb{R}$ with $h_s \vert_{\partial \Omega}=h_a \vert_{\partial \Omega}=0$ are introduced to enforce the homogeneous initial and Dirichlet boundary conditions in~\eqref{eq:sp-equation} and~\eqref{eq:sp-equation-adj}.
We train $\cS_{\theta_s}$ and $\cA_{\theta_a}$ in a similar manner as described in Section \ref{sec:fno}.
Let $\cS_{\theta_s^*}$ and $\cA_{\theta_a^*}$ be the trained neural solution operators with  $\theta_s^*$ and $\theta_a^*$ being the optimal parameters. Then, a primal-dual-based operator learning method for \eqref{eq:sp-prob}--\eqref{eq:sp-equation} can be specified from \Cref{alg:pdhg-opl}, which is summarized in \Cref{alg:sp-pdhg-opl}.
\begin{algorithm}[!ht]
	\caption{A primal-dual-based operator learning method for \eqref{eq:sp-prob}--\eqref{eq:sp-equation}}
	\begin{algorithmic}[1]
	    \REQUIRE Pre-trained neural solution operators $\cS_{\theta_s^*}$ and $\cA_{\theta_a^*}$; initialization $u^0$, $p^0$; stepsizes $\tau, \sigma>0$; auxiliary functions $h_s$ and $h_a$.
        \FOR {$k = 0, 1, \ldots $}
            \STATE $y^k = h_s \cS_{\theta_s^*}(u^k)$.
            \STATE $u^{k+1} = \prox_{\tau G} (u^k - \tau h_a \cA_{\theta_a^*}(y^k, p^k))$, where $\prox_{\tau_G}$ is calculated by \eqref{eq:bilinparab-prox-G}.
            \STATE $p^{k+1} = \frac{1}{1 + \sigma} \left(p^k + \sigma h_s \cS_{\theta_s^*}(2u^{k+1} -u^k) - \sigma y_d \right)$.
        \ENDFOR
	    \ENSURE Numerical solution $(\hat{u}, \hat{y}):=(u^{k+1}, h_s\cS_{\theta_s^*}(u^{k+1}))$ to \eqref{eq:sp-prob}--\eqref{eq:sp-equation}.
	\end{algorithmic}
	\label{alg:sp-pdhg-opl}
\end{algorithm}

\subsection{Numerical experiments}
We test \Cref{alg:sp-pdhg-opl} on the optimal control problem  \eqref{eq:sp-prob}--\eqref{eq:sp-equation} and verify its efficiency. To this end, we consider a challenging example adopted from~\cite{langer2020unstructured} with a highly nonlinear operator $R$. In particular, setting $\Omega=(0,1)^2$ and $T=1$, we test problem~\eqref{eq:sp-prob}--\eqref{eq:sp-equation} with $R(y) = y (y - 0.25) (y + 1)$
and the desired state
\begin{equation*}
    \begin{aligned}
        y_d(x, t) = & \exp \left( -20\left((x_1 - 0.2)^2 + (x_2 - 0.2)^2 + (t - 0.2) ^2\right) \right) \\
        & + \exp \left( -20\left((x_1 - 0.7)^2 + (x_2 - 0.7)^2 + (t - 0.9)^2\right) \right).
    \end{aligned}
\end{equation*}
We take the parameters $\alpha = 10^{-4}$, $a = -10$, and $b = 20$.
For the regularization parameter $\beta$, we consider two cases: $\beta = 0.004$ and $\beta = 0$, which correspond to sparse and non-sparse optimal controls, respectively.

For the training set of $\cS_{\theta_s}$, we first discretize $\overline{\Omega_T}$ by an $m \times m \times m_T$ equi-spaced grid $\overline{\mathcal{D}} = \{ ((x_1)_i, (x_2)_j, t_k) \in \overline{\Omega_T} : 1 \leq i, j \leq m, 1 \leq k \leq m_T \}$ with $m=m_T=64$, and sample $N_s = 2048$ functions $\{ v_i \}_{i=1}^{N_s}$ from the Gaussian random field
$v_i \sim \mathcal{GR} (0, C((x, t)^\top, (\tilde{x}, \tilde{t})^\top))$,
where the Gaussian covariance kernel is specified as $C((x, t)^\top, (\tilde{x}, \tilde{t})^\top) = 150 \exp( - 4 \| (x, t)^\top - (\tilde{x}, \tilde{t})^\top \|_2^2 )$.
For each $(x, t)^\top \in \Omega_T$, the generated $v_i$ is further projected to a sparse function $u_i$ via
\begin{equation*}
    u_i(x, t) = \begin{cases}
        x_1 x_2 (x_1-1) (x_2-1) \cdot \max \{ 25 v_i(x, t) - 125, ~ 20 \} \quad & \text{~if~} v_i(x, t) > 5, \\
        x_1 x_2 (x_1-1) (x_2-1) \cdot \min \{ 25 v_i(x, t) + 125, ~ -10  \} \quad & \text{~if~} v_i(x, t) < 5, \\
        0 \quad & \text{~otherwise}.
    \end{cases}
\end{equation*}
We solve the state equation \eqref{eq:sp-equation} for each $u_i$ to get a solution $y_i$ by the \texttt{MATLAB PDE Toolbox}, and use $\{(u_i, y_i)\}_{i=1}^N$ as the training set for $\cS_{\theta_s}$.
To generate a training set for $\cA_{\theta_a}$, we employ the same grids $\overline{\mathcal{D}}$ and sample $N_a = 2048$ functions $\{ \bar{y}_i := \psi_i f_i \}_{i=1}^{N_a}$, where each $f_i$ is sampled from the Gaussian random field $\mathcal{GR}(0, C((x, t)^\top, (\tilde{x}, \tilde{t})^\top))$ with $C((x, t)^\top, (\tilde{x}, \tilde{t})^\top) = 5 \exp(-4 \|(x, t)^\top - (\tilde{x}, \tilde{t})^\top\|_2^2)$, and $\psi_i(x, t) := t^{2/5} x(x-1) y(y-1)$ imposes homogeneous boundary and initial conditions on $\bar{y}_i$.
Next, we sample $N_a$ functions $\{g_i\}_{i=1}^{N_a}$ from the same Gaussian random field $\mathcal{GR}(0, C(x_1, x_2))$ and set $p_i = y_d - \psi_i g_i$.
We then solve for each $(\bar{y}_i, p_i)$ the solution $z_i$ of~\eqref{eq:sp-equation-adj}, and take $\{(\bar{y}_i, p_i, z_i)\}_{i=1}^{N_a}$ as the training set for $\cA_{\theta_a}$.

For the two FNOs $\cS_{\theta_s}$ and $\cA_{\theta_a}$, we set their lifting dimensions to $m_p = 32$ and $m_p = 16$, respectively.
Each of $\cS_{\theta_s}$ and $\cA_{\theta_a}$ consists of three hidden Fourier layers and \texttt{GeLU} activation function.
The number of truncated Fourier modes is $k = 16$ for $\cS_{\theta_s}$ and $k = 8$ for $\cA_{\theta_a}$.
All the neural network parameters $\theta_s$ and $\theta_a$ are initialized by the default \texttt{PyTorch} settings.
	
We take $h_s(x, t) := t^{2/5} xy(1-x) (1-y)$ and $h_a(x, t) := h_s(x, T-t)$. Then, the two FNOs $\cS_{\theta_s}$ and $\cA_{\theta_a}$ are trained by respectively minimizing the following loss functions:
\begin{equation}\label{eq:fno-loss1}
	\cL_s(\theta_s) = \frac{1}{N_s}\sum_{i=1}^{N_s} \frac{\sum_{(x, t)^\top \in \overline{\mathcal{D}}}|h_s(x, t)  \cS_{\theta_s}(u_i)(x, t) - y_i(x, t)|^2}{\sum_{(x, t)^\top \in \overline{\mathcal{D}}} \left|y_i(x, t)\right|^2},
\end{equation}
\begin{equation}\label{eq:fno-loss2}
	\cL_a(\theta_a) = \frac{1}{N_a}\sum_{i=1}^{N_a} \frac{ \sum_{(x, t)^\top \in \overline{\mathcal{D}}}\left|h_a(x, t)  \cA_{\theta_a}(y_i, p_i)(x, t) - z_i(x, t)\right|^2}{\sum_{(x, t)^\top \in \overline{\mathcal{D}}}\left|z_i(x, t)\right|^2}.
\end{equation}
For training $\cS_{\theta_s}$, we minimize the loss function \eqref{eq:fno-loss1} by the Adam optimizer with batch size $8$ for 500 epochs, where
the learning rate is initialized as $0.001$ and then multiplied by $0.6$ for every 50 epochs.
For $\cA_{\theta_a}$, the loss function \eqref{eq:fno-loss1} is minimized by the Adam optimizer with a batch size of $8$ for 300 epochs.
We initialize the learning rate as $0.001$ and decay it by a factor of $0.6$ every $30$ epochs.

We solve the optimal control problem~\eqref{eq:sp-prob}--\eqref{eq:sp-equation} for two parameters $\beta = 0.004$ and $\beta = 0$ by \Cref{alg:sp-pdhg-opl} with the pre-trained $\cS_{\theta_s^*}$ and $\cA_{\theta_a^*}$.
We initialize \Cref{alg:sp-pdhg-opl} with $u^0 = p^0 = 0$, and set the stepsizes to be $\tau = 500$ and $\sigma = 0.4$.
The algorithm is terminated when~\eqref{eq:term-cond1} is satisfied.
The numerical solutions for $\beta = 0.004$ and $\beta = 0$ are shown in~\Cref{fig:sp-computed-ctrl-state-beta-0.004} and \Cref{fig:sp-computed-ctrl-state-beta-0}, respectively, where the computed control and state are depicted at $t = 0.25$, $t = 0.5$, and $t = 0.75$.
The results match the figures obtained by FEM-based methods in~\cite{langer2020unstructured}.

\begin{figure}[!ht]
    \centering
    \subfloat[Computed $u$ at $t = 0.25$]{%
        \includegraphics[width=0.33\textwidth]{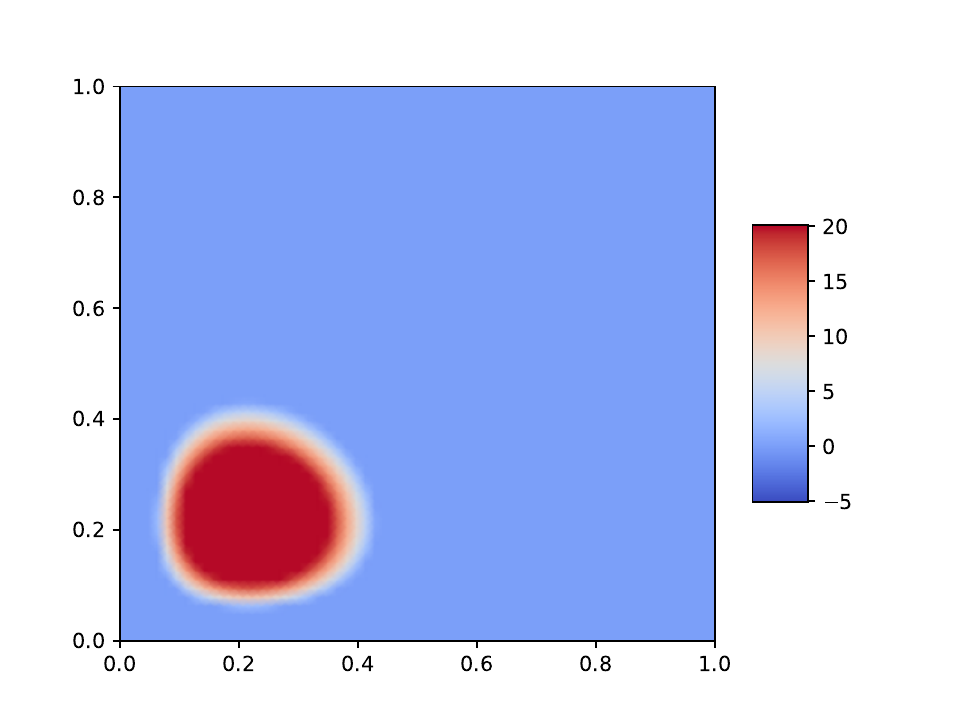}%
    }\hfill
    \subfloat[Computed $u$ at $t = 0.5$]{%
        \includegraphics[width=0.33\textwidth]{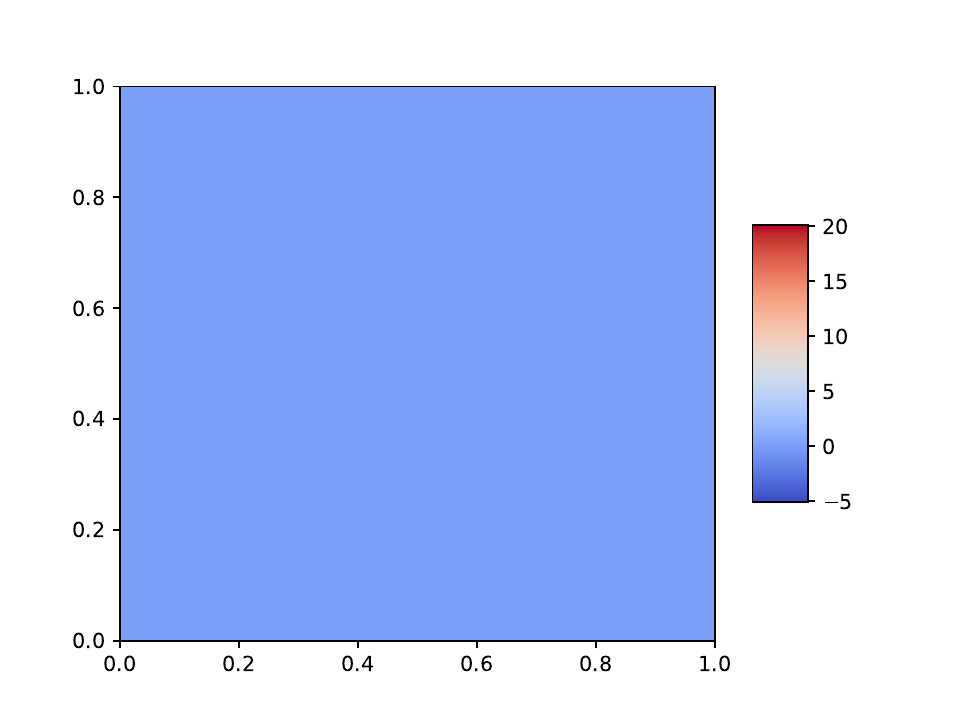}%
    }\hfill
    \subfloat[Computed $u$ at $t = 0.75$]{%
        \includegraphics[width=0.33\textwidth]{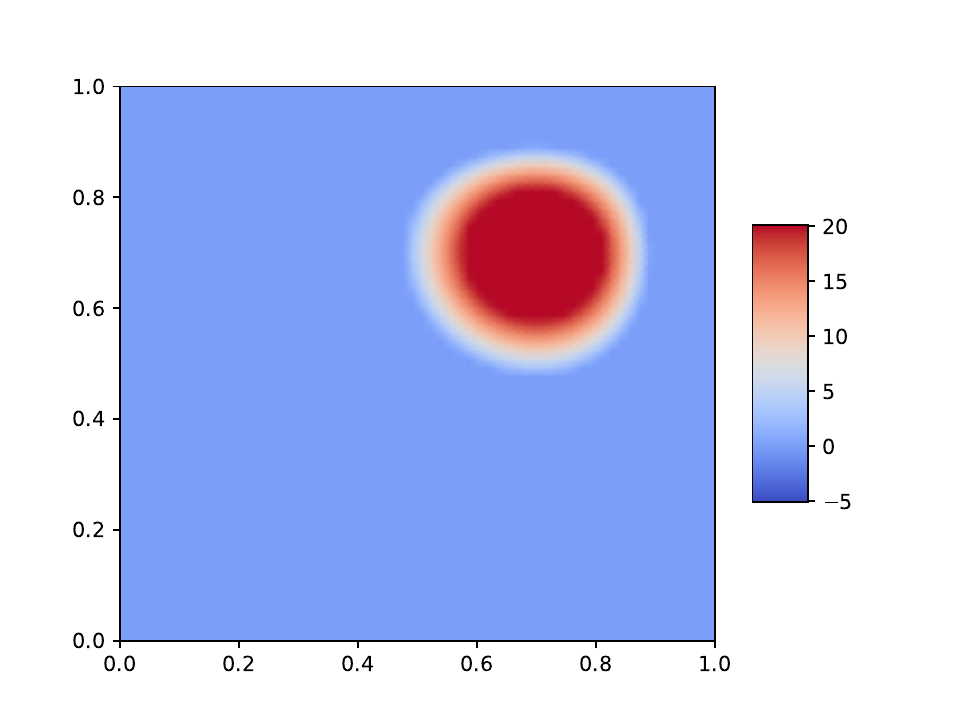}%
    }\\
    \subfloat[Computed $y$ at $t = 0.25$]{%
        \includegraphics[width=0.33\textwidth]{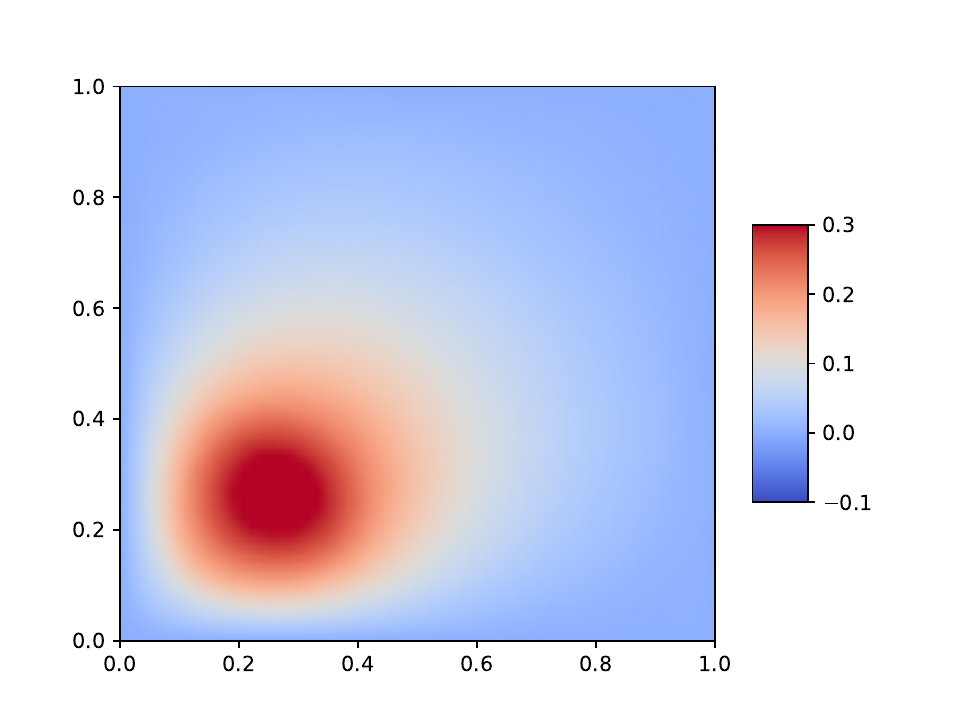}%
    }\hfill
    \subfloat[Computed $y$ at $t = 0.5$]{%
        \includegraphics[width=0.33\textwidth]{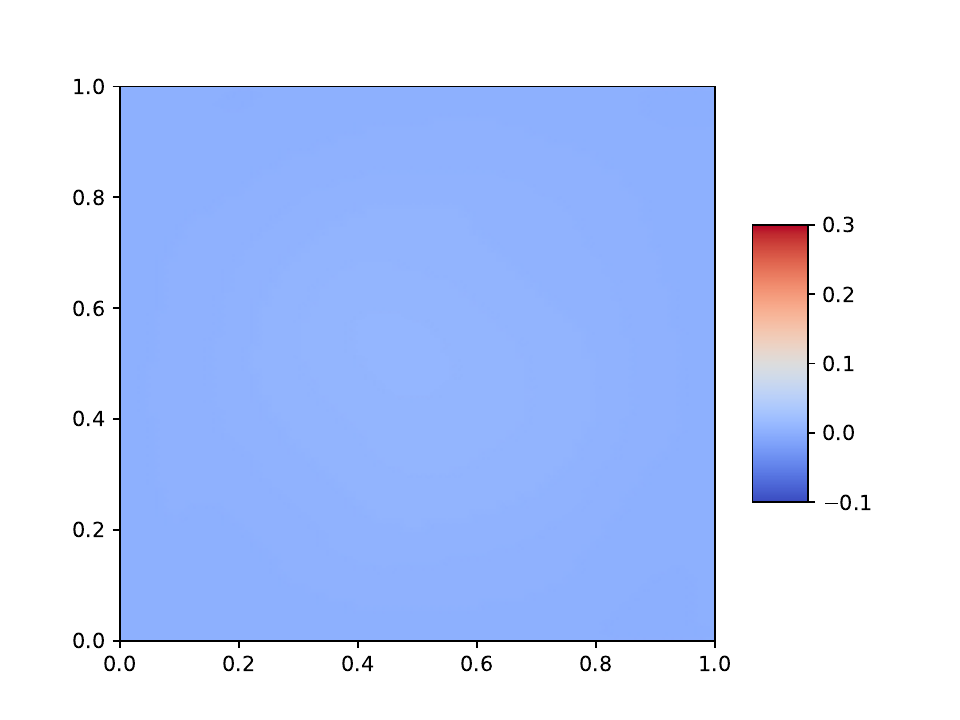}%
    }\hfill
    \subfloat[Computed $y$ at $t = 0.75$]{%
        \includegraphics[width=0.33\textwidth]{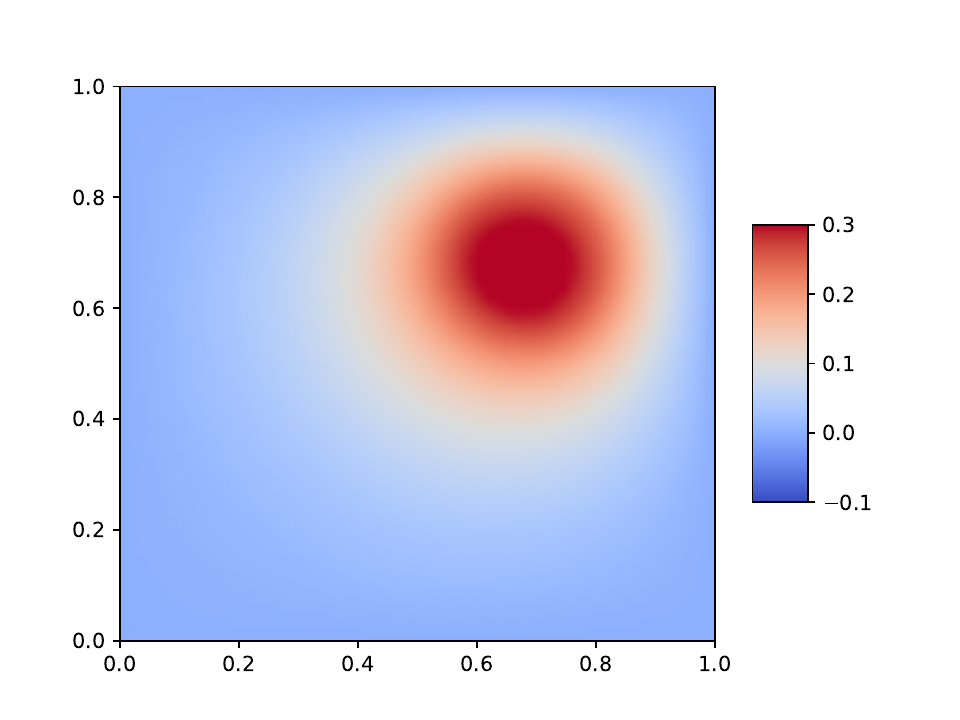}%
    }
    \caption{Computed optimal control and state for~\eqref{eq:sp-prob}--\eqref{eq:sp-equation} with $\beta = 0.004$.}
    \label{fig:sp-computed-ctrl-state-beta-0.004}
\end{figure}

\begin{figure}[!ht]
    \centering
    \subfloat[Computed $u$ at $t = 0.25$]{%
        \includegraphics[width=0.32\textwidth]{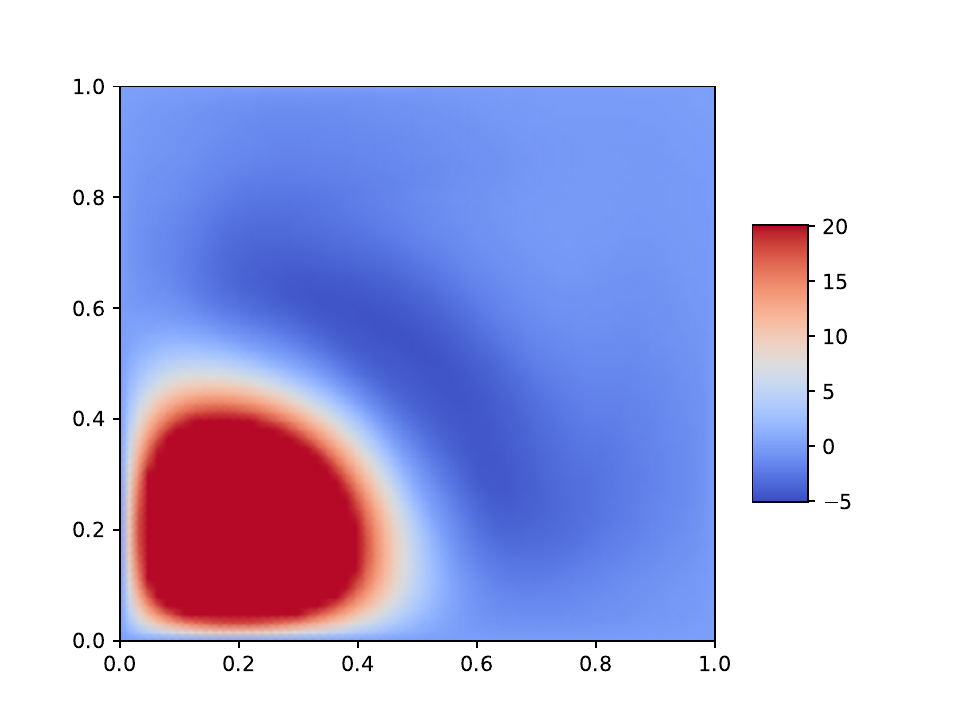}%
    }\hfill
    \subfloat[Computed $u$ at $t = 0.5$]{%
        \includegraphics[width=0.32\textwidth]{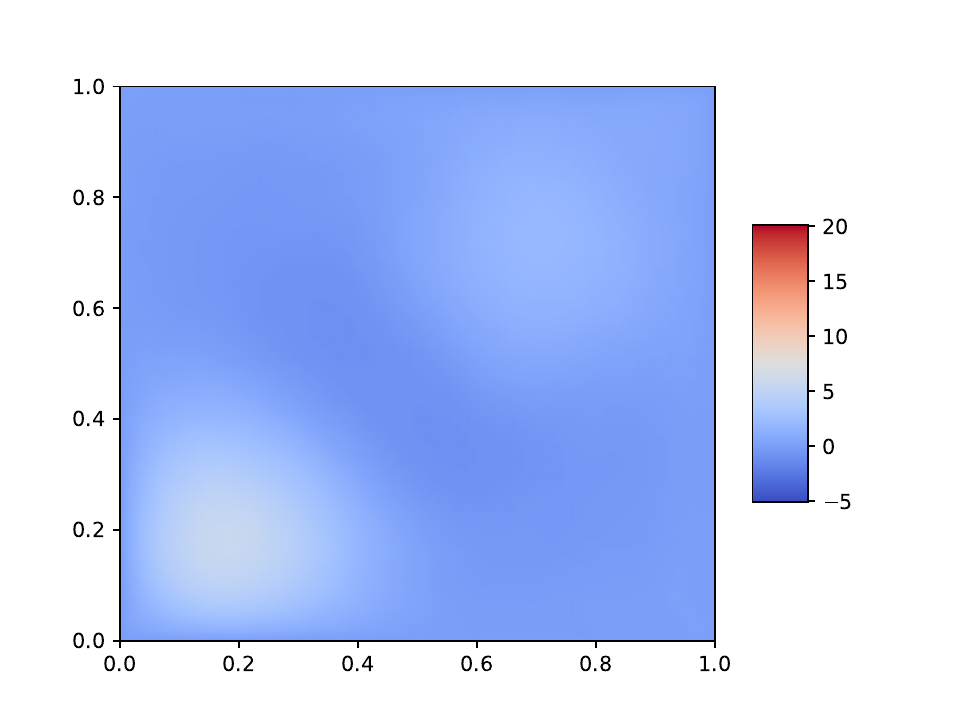}%
    }\hfill
    \subfloat[Computed $u$ at $t = 0.75$]{%
        \includegraphics[width=0.32\textwidth]{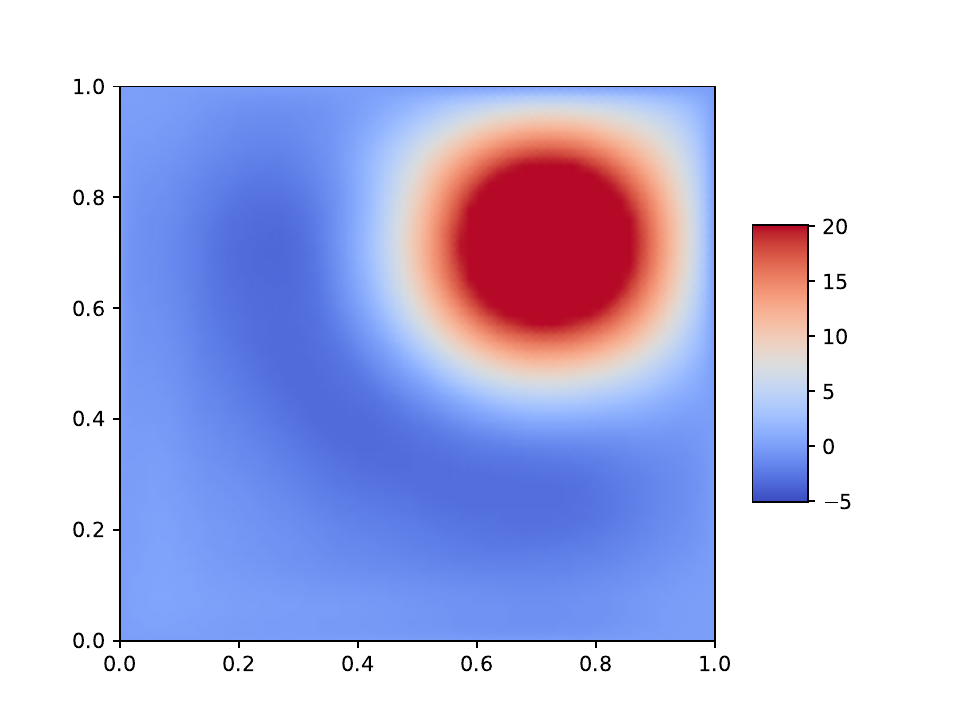}%
    }\\
    \subfloat[Computed $y$ at $t = 0.25$]{%
        \includegraphics[width=0.32\textwidth]{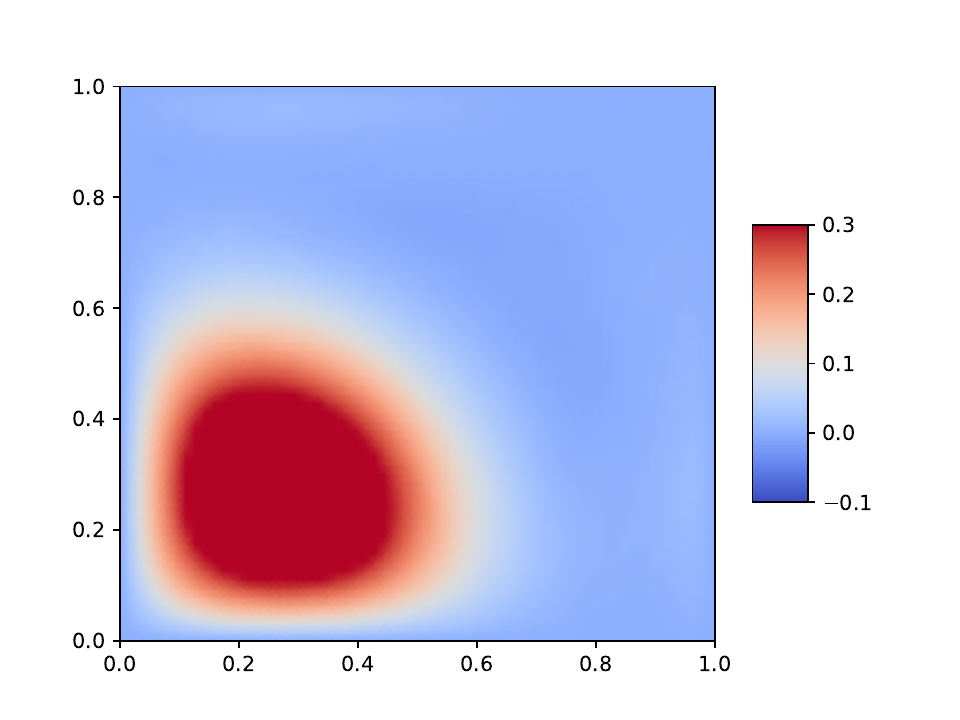}%
    }\hfill
    \subfloat[Computed $y$ at $t = 0.5$]{%
        \includegraphics[width=0.32\textwidth]{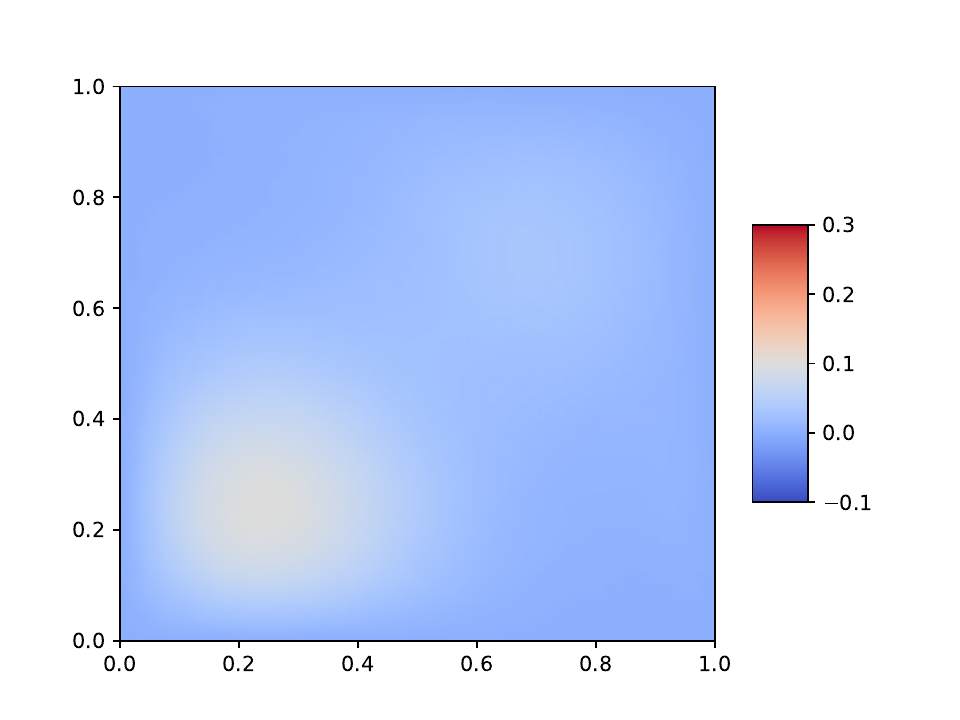}%
    }\hfill
    \subfloat[Computed $y$ at $t = 0.75$]{%
        \includegraphics[width=0.32\textwidth]{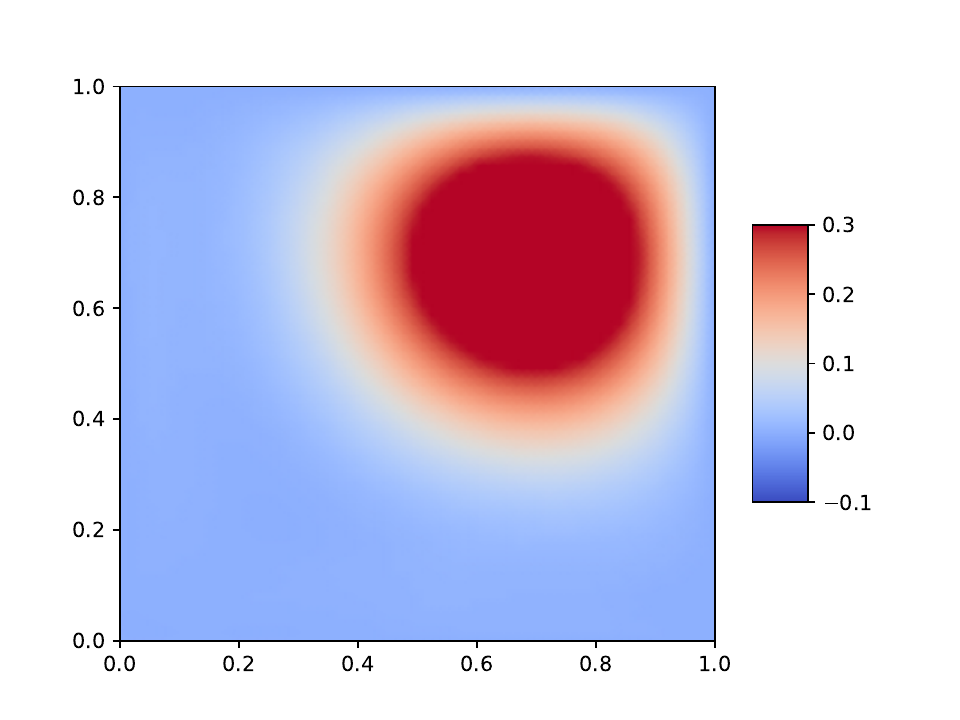}%
    }
    \caption{Computed optimal control and state for~\eqref{eq:sp-prob}--\eqref{eq:sp-equation} with $\beta = 0$.}
    \label{fig:sp-computed-ctrl-state-beta-0}
\end{figure}

We validate the efficiency of \Cref{alg:sp-pdhg-opl} by comparing it with the SSN-FEM and PD-FEM introduced in Section \ref{sec:pd-decoup}. We run 10 iterations of SSN-FEN and 20 iterations of PD-FEN in total.
For SSN-FEN, the Newton's equation is solved by CG with a tolerance of $10^{-5}$.
We set the stepsizes of the PD-FEM as those applied in \Cref{alg:sp-pdhg-opl}. The results is summarized in \Cref{tab:sp-computation-time}. It is observed that \Cref{alg:sp-pdhg-opl} significantly outperforms the SSN-FEM and PD-FEM in terms of computational time, thus validating the numerical efficiency of \Cref{alg:sp-pdhg-opl} over traditional algorithms for solving the sparse optimal control problem of semilinear parabolic equations.

\begin{table}[!ht]
    \footnotesize
    \centering
    \caption{Computation time (seconds) for the SSN-FEM, PD-FEM, and \Cref{alg:sp-pdhg-opl}, with respect to different spatial grid resolutions $m$.}
    \begin{tabular}{c c c c}
		\toprule
		$m$ & SSN-FEM & PD-FEM & {\Cref{alg:sp-pdhg-opl}} \\
		\midrule
        $16$ & $7.0862$ & $9.0693$ & $1.7186$ \\
		$32$ & $22.2930$ & $21.1309$ & $2.0926$ \\
		$64$ & $75.2745$ & $69.1137$ & $3.7704$ \\
		$128$ & $537.6598$ & $735.1338$ & $14.8336$ \\
		$256$ & $3410.1498$ & $9343.6343$ & $56.3242$ \\
		\bottomrule
	\end{tabular}
	\label{tab:sp-computation-time}
    \normalsize
\end{table}

\section{Conclusions and perspectives}\label{sec:conclusion}
We propose a primal-dual-based operator learning framework for solving a general class of nonsmooth optimal control problems with nonlinear partial differential equation (PDE) constraints.
The resulting primal-dual-based operator learning methods inherit the efficiency and generalization of operator learning while leveraging the model-based property of the primal-dual methods.
Specifically, the pre-trained deep neural networks, serving as surrogate models for the involved PDE subproblems, can be reused across primal-dual iterations and different problem parameter settings. Consequently, the proposed methods differ from existing algorithms in that they do not require repeatedly solving discretized algebraic systems or retraining neural networks.
Extensive numerical experiments on various nonsmooth optimal control problems with nonlinear PDE constraints validate the efficiency and accuracy of the proposed primal-dual-based operator learning methods.

Our work leaves some important questions for future research. The numerical efficiency of the proposed methods underscores the theoretical understanding of their convergence properties in general cases.
It is also interesting to investigate how to extend our discussions to the case where the PDEs are defined in high-dimensional spaces such as the nonlinear Black-Scholes equations \cite{e2021algorithms} and the Boltzmann kinetic equations \cite{naldi2010mathematical}.

\bibliographystyle{siamplain}
\bibliography{references}

\end{document}